\newtheorem{lemma}{Lemma}
\newtheorem{definition}{Definition}
\newtheorem{theorem}{Theorem}
\newtheorem{corollary}[theorem]{Corollary}
\DeclareMathOperator*{\argmin}{arg\,min}
\DeclareMathOperator*{\argmax}{arg\,max}
\definecolor{rp}{RGB}{83,54,106}
\def\boxit#1{\vbox{\hrule\hbox{\vrule\kern6pt\vbox{\kern6pt#1\kern6pt}\kern6pt\vrule}\hrule}}
\begin{document}

\title{Low noise sensitivity analysis of $\ell_q$-minimization in oversampled systems
\thanks{}
}
\author{Haolei Weng and Arian Maleki}

\date{}
\maketitle

\begin{abstract}

The class of $\ell_q$-regularized least squares (LQLS) are considered for estimating $\beta \in \mathbb{R}^p$ from its $n$ noisy linear observations $y=X\beta+ w$. The performance of these schemes are studied under the high-dimensional asymptotic setting in which the dimension of the signal grows linearly with the number of measurements. In this asymptotic setting, phase transition diagrams (PT) are often used for comparing the performance of different estimators. PT specifies the minimum number of observations required by a certain estimator to recover a structured signal, e.g. a sparse one, from its noiseless linear observations. Although phase transition analysis is shown to provide useful information for compressed sensing, the fact that it ignores the measurement noise not only limits its applicability in many application areas, but also may lead to misunderstandings. For instance, consider a linear regression problem in which $n>p$ and the signal is not exactly sparse. If the measurement noise is ignored in such systems, regularization techniques, such as LQLS, seem to be irrelevant since even the ordinary least squares (OLS) returns the exact solution. However, it is well-known that if $n$ is not much larger than $p$ then the regularization techniques improve the performance of OLS. 

In response to this limitation of PT analysis, we consider the {\em low-noise sensitivity analysis}. We show that this analysis framework (i) reveals the advantage of LQLS over OLS, (ii) captures the difference between different LQLS estimators even when $n>p$, and (iii) provides a fair comparison among different estimators in high signal-to-noise ratios. As an application of this framework, we will show that under mild conditions LASSO outperforms other LQLS even when the signal is dense. Finally, by a simple transformation we connect our low-noise sensitivity framework to the classical asymptotic regime in which $n/p \rightarrow \infty$ and characterize how and when regularization techniques offer improvements over ordinary least squares, and which regularizer gives the most improvement when the sample size is large.

\vskip 0.1in
\noindent {\bf Key Words: High-dimensional linear model, $\ell_q$-regularized least squares, ordinary least squares, LASSO, phase transition, asymptotic mean square error, second-order expansion, classical asymptotics.} 
\end{abstract}

\section{Introduction}\label{sec:intro}

\subsection{Problem Statement} \label{ps:first}

In modern data analysis, one of the fundamental models that has been extensively studied is the high-dimensional linear model $y=X\beta+w$ with $y\in \mathbb{R}^n, X \in \mathbb{R}^{n\times p}$, where the length of the signal $p$ is at the same order or in some cases larger than the number of observations $n$. Since the ordinary least squares (OLS) estimate is not accurate in the high-dimensional regime, researchers have proposed a wide range of regularization techniques and recovery algorithms to go beyond OLS. The existence of a variety of algorithms and regularizers has in turn called for platforms that can provide fair comparisons among them. One of the most popular platforms is the phase transitions analysis (PT). Intuitively speaking, phase transition diagram measures the minimum number of observations an algorithm or an estimator requires to recover $\beta$. To understand the limitations of PT we study the performance of the following $\ell_q$-regularized least squares (LQLS), a.k.a. bridge estimators \cite{frank1993statistical},
\begin{eqnarray}\label{lqls:def}
\hat{\beta}(\lambda, q) \in \argmin_{\beta}\frac{1}{2}\|y-X\beta\|_2^2+\lambda \|\beta\|_q^q,  \quad 1\leq q\leq 2,
\end{eqnarray}
where $\|\cdot\|_q$ is the usual $\ell_q$ norm and $\lambda \geq 0$ is a tuning parameter. This family covers LASSO \citep{tibshirani1996regression} and Ridge \citep{hoerl1970ridge}, two well known estimates in high-dimensional statistics  and compressed sensing. 

Phase transition analysis studies the asymptotic mean square error (AMSE) $\|\hat{\beta}(\lambda, q)- \beta \|_2^2/p$ under the asymptotic setting $p \rightarrow \infty$ and $n/p \rightarrow \delta$. Then, it considers $w=0$ and calculates the smallest $\delta$ for which $\inf_\lambda \lim_{p \rightarrow \infty} \|\hat{\beta}(\lambda, q)- \beta \|_2^2/p =0$. In this paper, we consider situations in which $\beta$ is not exactly sparse. As is intuitively expected and will be discussed later in the paper, the phase transition analysis implies that for every $1 \leq q \leq 2$, if $\delta>1$, then $\inf_\lambda \lim_{p \rightarrow \infty} \|\hat{\beta}(\lambda, q)- \beta \|_2^2/p =0$ and if $\delta <1$, then $\inf_\lambda \lim_{p \rightarrow \infty} \|\hat{\beta}(\lambda, q)- \beta \|_2^2/p \neq 0$. This simple application of PT reveals some of the limitations of the phase transition analysis:
\begin{enumerate}
\item Phase transition analysis is concerned with $w=0$, and when $\beta$ is not sparse, LQLS with different values of $q$ have the same phase transition at $\delta=1$. Hence, it is not clear whether regularization can improve the performance of  ordinary least squares (OLS) and if it does, which regularizer is the best. We expect the choice of regularizer to matter when we add some noise to the measurements.

\item Phase transition diagram is not sensitive to the magnitudes of the elements of $\beta$. Again, intuitively speaking, this seems to have a major impact on the performance of different estimators when the noise is present in the system.
\end{enumerate}

This paper aims to present a generalization of the phase transition, called low-noise sensitivity analysis, in which it is assumed that a small amount of noise is present in the measurements. Then it calculates AMSE of each estimate. This framework has the following two main advantages over the phase transition analysis:
\begin{enumerate}
\item It reveals certain phenomena that are important in applications and are not captured by PT analysis. For instance, one immediately sees the impact of the regularizer and the magnitudes of the elements of $\beta$ on the AMSE. Furthermore, these relations are expressed explicitly and can be interpreted easily.

\item It provides a bridge between the phase transition analysis proposed in compressed sensing, and the classical large sample-size asymptotic ($n/p \rightarrow \infty$). We will discuss some of the implications of this connection for the classical asymptotics in Section \ref{largesample:imp}. 

\end{enumerate}
As a consequence, low noise sensitivity analysis enables us to present a fair comparison among different LQLS, and reveal different factors that affect their performance.

Here is the organization of the rest of this paper: In Section \ref{sec:relatedwork}, we discuss some related works and highlight the differences from ours. In section \ref{sec:def}, we formally introduce the asymptotic framework adopted in our analyses. In Section \ref{sec:our}, we present and discuss our main contributions in details.  In Section \ref{sec:proof}, we prove all the main results.

\subsection{Related work}\label{sec:relatedwork}

The phase transition analysis for compressed sensing evolved in a series of papers by Donoho and Tanner \cite{donoho2004most, donoho2004most2, donoho2006high, donoho2005sparse, donoho2005neighborliness}. Donoho and Tanner characterized the phase transition curve for LASSO and some of its variants.
Inspired by Donoho and Tanner's breakthrough, many researchers explored the performance of different algorithms under the asymptotic settings $n/p \rightarrow \delta$ and $p \rightarrow \infty$ \cite{reeves2013minimax, stojnic2009various, stojnic2009block, stojnic2013under, amelunxen2013living, thrampoulidis2016precise, el2013robust,  karoui2013asymptotic, donoho2013high, donoho2013phase, donoho2015variance, bradic2015robustness, donoho2011noise, DoMaMoNSPT, foygel2014corrupted, zheng2015does,  rangan2009asymptotic, krzakala2012statistical, BaMo10, BaMo11, reeves2008sampling, reeves2016replica}. In this paper, we use the message passing analysis that was developed in a series of papers \cite{DoMaMoNSPT, donoho2009message, maleki2010approximate, BaMo10, BaMo11, maleki2013asymptotic} to characterize the asymptotic mean square errors of LQLS. As a result of this calculation we obtain some equations whose solution can specify AMSE of an estimate. Unfortunately, the complexity of these equations does not allow us to interpret the results and obtain useful information. Hence, we develop a machinery to simplify the AMSE formulas and turn them to explicit and informative quantities. 

Note that the AMSE formulas we derive in Theorem \ref{thm:eqpseudolip} can be calculated by the framework developed in \cite{thrampoulidis2016precise} too. Furthermore, the phase transition formulas we derive in Theorem \ref{thm:phasetransition} can be derived from the framework of \cite{amelunxen2013living} as well. The low noise sensitivity analysis that we use in this paper has been also used in \cite{weng2016overcoming}. But the analysis of \cite{weng2016overcoming} is only concerned with sparse signals. This paper avoids the sparsity assumption. Perhaps surprisingly, the proof techniques developed in \cite{weng2016overcoming} for the sparse signals come short of characterizing higher order terms in dense cases. In response to that limitation, we propose a delicate chaining argument in this paper that offers a much more accurate characterization of the higher order terms.
The study of dense signals provides much more complete understanding of the the bridge estimators. In particular, under the sparse settings, \cite{weng2016overcoming} reveals the monotonicity of LQLS's performance: LASSO is optimal, and the closer $q$ is to 1, the better LQLS performs. However, in this paper we show that the comparison and optimality characterization of LQLS becomes much more subtle for non-sparse signals. A general and detailed treatment for different types of dense signals are given in Section \ref{sec:our}.
Finally, we should also emphasize that the machinery required for the low-noise sensitivity analysis of dense signals is different from (and much more complicated) the one developed for sparse signals.

Our paper performs the  asymptotic analysis of LQLS. Many researchers have used non-asymptotic frameworks for this purpose. Among all the LQLS, LASSO is the best studied one. The past decade has witnessed dramatic progress towards understanding the performance of LASSO in the tasks of parameter estimation \citep{donoho2006compressed, donoho2005neighborliness, candes2006robust, donoho2009message, bickel2009simultaneous, raskutti2011minimax}, variable selection \citep{zhao2006model, wainwright2009sharp, meinshausen2006high, reeves2008sampling}, and prediction \citep{greenshtein2004persistence}. We refer the reader to \cite{buhlmann2011statistics, eldar2012compressed} for a complete list of references.  The ones most related to our work are \cite{donoho2006compressed, candes2006near, candes2011probabilistic}. In the first two papers, the authors considered non-sparse $\beta$ with the constraint that $\|\beta\|_q \leq R$ or the $i$th largest component $|\beta|_{(i)}$ decays as $i^{-\alpha} (\alpha>0)$. The papers derived optimal (up to logarithmic factor) upper bounds on the mean square error of LASSO. However, in this paper we characterize the performance of LASSO for a generic $\beta$ and derive conditions under which LASSO outperforms other bridge estimators. Also, we should emphasize that thanks to our asymptotic settings, unlike these two papers we are able to derive exact expressions of AMSE with sharp constants. Finally, \cite{candes2011probabilistic} studied a fixed signal $\beta$ and obtained an oracle inequality for $\|\hat{\beta}(1,\lambda)-\beta\|_2$, with the tuning $\lambda$ chosen as an explicit function of $p$. While their results are more general than ours, the bounds suffer from loose constants and are not sufficient to provide sharp comparison of LASSO with other LQLS. Moreover, the tuning parameter $\lambda$ in our case is set to the optimal one that minimizes the AMSE for every LQLS, which further paves our way for accurate comparison between different LQLS.   

Finally, the performance of LQLS with $q\geq 0$ under classical asymptotic setting where $p$ is fixed and $n\rightarrow \infty$ is studied in \cite{knight2000asymptotics}. The author obtained the $\sqrt{n}$ convergence of LQLS estimates and derived the asymptotic distributions. His results can be used to calculate the AMSE for LQLS with optimal tuning and show that they are all equal for $q\in [1,2]$. However, we demonstrate in Section \ref{largesample:imp} that by a second-order analysis, a more accurate comparison between the performances of different LQLS is possible. In particular, LASSO will be shown to outperform others for certain type of dense signals. We should also mention that the idea of obtaining higher order terms for large sample scenarios was first introduced in \cite{wang2017bridge}. However, all the results in \cite{wang2017bridge} are only concerned with the sparse signals. In this paper, we consider dense signals. We discussed the main challenges of switching from sparse signals to dense signals earlier in this section.


\section{The asymptotic framework}\label{sec:def}
The main goal of this section is to formally introduce the asymptotic framework under which we study LQLS. In the current section only, we write vectors and matrices as $X(p), \beta(p), y(p)$, and $w(p)$ to emphasize their dependence on the dimension of $\beta \in \mathbb{R}^p$.  Similarly, we may use $\hat{\beta}(\lambda,q,p)$ as a substitute for $\hat{\beta}(\lambda, q)$. We first define a specific type of a sequence known as a converging sequence. Our definition is borrowed from other papers \cite{DoMaMoNSPT, BaMo10, BaMo11} with some minor modifications. Recall that we have the linear model $y(p)=X(p)\beta(p)+w(p)$.

\begin{definition}\label{def:convseq}
A sequence of instances $\{X(p), \beta(p), w(p)\}$ is called a converging sequence if the following conditions hold:
\begin{itemize}
\item[-] $n/p\rightarrow \delta \in (0,\infty)$, as $n\rightarrow \infty$.
\item[-] The empirical distribution of $\beta(p) \in \mathbb{R}^p$ converges weakly to a probability measure $p_{\beta}$ with bounded second moment. Further, $\frac{1}{p} \|\beta(p)\|_2^2$ converges to the second moment of $p_{\beta}$. 
\item[-] The empirical distribution of $w(p) \in \mathbb{R}^n$ converges weakly to a zero mean distribution with variance $\sigma_w^2$. Furthermore, $\frac{1}{n} \|w(p)\|_2^2 \rightarrow \sigma_w^2$. 

\item[-] The elements of $X(p)$ are iid with distribution $N(0, 1/n)$. 
\end{itemize}
\end{definition}

For each of the problem instances in the converging sequence, we solve the LQLS problem \eqref{lqls:def} and obtain $\hat{\beta}( \lambda,q,p)$ as the estimate. The goal is to evaluate the accuracy of this estimate. Below we define the asymptotic mean square error as the asymptotic measures of performance. 

\begin{definition}\label{def:observablelasso}
Let $\hat{\beta}(\lambda,q,p)$ be the sequence of solutions of LQLS for the converging sequence of instances $\{\beta(p), X(p), w(p)\}$. The asymptotic mean square error of $\hat{\beta}(\lambda,q,p)$ is defined as the following almost sure limit
\begin{equation} \label{eq:def:AMSE}
{\rm AMSE}(\lambda,q,\sigma_w) \triangleq \lim_{p \rightarrow \infty} \frac{1}{p} \sum_{i=1}^p (\hat{\beta}_i(\lambda,q,p)-\beta_{i}(p))^2.
\end{equation}
\end{definition}
Note that AMSE depends on other factors like $p_{\beta}$. We have suppressed the notations for simplicity. The performance of LQLS as defined above is affected by the tuning parameter $\lambda$. In this paper, we consider the value of $\lambda$ that gives the minimum AMSE. Let $\lambda_{*,q}$ denote the value of $\lambda$ that minimizes AMSE given in \eqref{eq:def:AMSE}:\footnote{It turns out the optimal value of $\lambda$ can be estimated accurately under the asymptotic settings discussed in this paper. See \cite{mousavi2015consistent} for more information.}
\begin{equation}\label{eq:defoptlambda}
\lambda_{*,q}=\argmin_{\lambda\in [0,\infty)} {\rm AMSE}(\lambda,q,\sigma_w)
\end{equation}
 Then LQLS is solved with this specific value of $\lambda$, i.e., 
\begin{eqnarray}\label{optimal:lqls}
\hat{\beta} (\lambda_{*,q}, q ) \in \arg\min_{\beta} \frac{1}{2} \|y-X\beta\|_2^2+ \lambda_{*,q} \|\beta\|_q^q.
\end{eqnarray}
This is the best performance that LQLS with each value of $q$ can achieve in terms of AMSE. We can use Corollary 1 in \cite{weng2016overcoming} to obtain the precise formula of this optimal AMSE. 

\begin{theorem}\label{thm:eqpseudolip} 
For a given $q\in [1,2]$. Consider a converging sequence $\{\beta(p), X(p), w(p)\}$. Suppose that $\hat{\beta}(\lambda_{*,q},q)$ is the solution of LQLS with optimal tuning $\lambda_{*,q}$ defined in \eqref{optimal:lqls} and ${\rm AMSE}(\lambda_{*,q},q,\sigma_w)$ is its AMSE defined in \eqref{eq:def:AMSE}. Then 
\begin{equation}\label{eq:lassoobs}
{\rm AMSE}(\lambda_{*,q},q,\sigma_w) = \min_{\chi \geq 0}\mathbb{E}_{B,Z} (\eta_q(B+\bar{\sigma}Z; \chi)- B)^2,
\end{equation}
where $B$ and $Z$ are two independent random variables with distributions $p_\beta$ and $N(0, 1)$, respectively; $\eta_q$ is the proximal operator for the function $\| \cdot \|_q^q$; \footnote{The proximal operator of $\| \cdot \|_q^q$ is defined as $\eta_q (u; \chi) \triangleq \arg \min_z \frac{1}{2} (u-z)^2 + \chi |z|^q$. For further information on these functions, please refer to Lemma \ref{prox:prop} in Section \ref{begin:proof}.} and $\bar{\sigma}$ is the unique solution of the following equation:
\begin{eqnarray} \label{eq:fixedpoint}
\bar{\sigma}^2 &=& \sigma_{\omega}^2+\frac{1}{\delta} \min_{\chi \geq 0} \mathbb{E}_{B, Z} [(\eta_q(B +\bar{\sigma} Z; \chi) -B)^2].
\end{eqnarray}
\end{theorem}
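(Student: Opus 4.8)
The plan is to reduce the statement to the single-tuning state-evolution (SE) characterization of LQLS, which is already available from the approximate message passing literature, and then to carry out the optimization over $\lambda$ by hand. First I would recall the fixed-$\lambda$ result: for any $q\in[1,2]$ and any fixed $\lambda\ge 0$, the AMP analysis of \eqref{lqls:def} (see \cite{BaMo11, maleki2013asymptotic} and Corollary~1 of \cite{weng2016overcoming}) shows that the joint empirical distribution of $(\hat\beta(\lambda,q),\beta)$ is asymptotically that of $(\eta_q(B+\sigma Z;\chi),B)$, with $B\sim p_\beta$ and $Z\sim N(0,1)$ independent, where $(\sigma,\chi)=(\sigma_\lambda,\chi_\lambda)$ is the unique solution of the coupled pair
\begin{align*}
\sigma^2 &= \sigma_w^2 + \tfrac1\delta\,\mathbb{E}_{B,Z}\big[(\eta_q(B+\sigma Z;\chi)-B)^2\big], \\
\lambda  &= \chi\Big(1-\tfrac1\delta\,\mathbb{E}_{B,Z}\big[\eta_q'(B+\sigma Z;\chi)\big]\Big),
\end{align*}
which I will call the \emph{SE equation} and the \emph{calibration equation}, respectively; consequently ${\rm AMSE}(\lambda,q,\sigma_w)=\mathbb{E}_{B,Z}[(\eta_q(B+\sigma_\lambda Z;\chi_\lambda)-B)^2]$. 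Uniqueness here, and the regularity of $\eta_q$ that I will need below (differentiability in both arguments, monotonicity, and the bound $\eta_q'\in[0,1]$ for $q\in[1,2]$), follow from Lemma~\ref{prox:prop} together with strict convexity of \eqref{lqls:def} for $q>1$ and uniqueness of the AMP fixed point for $q=1$.

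Writing $F(\sigma,\chi):=\mathbb{E}_{B,Z}[(\eta_q(B+\sigma Z;\chi)-B)^2]$, the second step is a reparametrization from $\lambda$ to $\chi$. I would show that for each fixed $\chi\ge0$ the map $\sigma\mapsto\sigma^2-\tfrac1\delta F(\sigma,\chi)$ is strictly increasing and unbounded above (using the derivative bounds on $\eta_q$ and the noise-monotonicity $\partial_\sigma F\ge0$ from Lemma~\ref{prox:prop}), so that the SE equation has a unique root $\sigma=\sigma(\chi)$ depending continuously on $\chi$, and that the induced tuning $\chi\mapsto\lambda(\chi):=\chi\big(1-\tfrac1\delta\,\mathbb{E}[\eta_q'(B+\sigma(\chi)Z;\chi)]\big)$ is continuous with $\lambda(0)=0$ and $\lambda(\chi)\to\infty$. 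Hence as $\lambda$ sweeps $[0,\infty)$ its SE partner $\chi_\lambda$ sweeps the range on which $\lambda(\cdot)$ is nonnegative, and therefore
\[
{\rm AMSE}(\lambda_{*,q},q,\sigma_w)=\min_{\lambda\ge0}{\rm AMSE}(\lambda,q,\sigma_w)=\min_{\chi}\,F\big(\sigma(\chi),\chi\big).
\]

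The third step decouples the two minimizations. Let $g(\sigma):=\min_{\chi\ge0}F(\sigma,\chi)$, which is continuous and bounded since $F(\sigma,\chi)\to\mathbb{E}B^2$ as $\chi\to\infty$, and let $\bar\sigma$ solve $\bar\sigma^2=\sigma_w^2+\tfrac1\delta g(\bar\sigma)$; uniqueness of $\bar\sigma$ follows once more from monotonicity of $\sigma\mapsto\sigma^2-\tfrac1\delta g(\sigma)$, which one gets from $g'(\sigma)=\partial_\sigma F(\sigma,\chi^\star(\sigma))\ge0$ via the envelope theorem. It then suffices to show $\min_{\chi}F(\sigma(\chi),\chi)=g(\bar\sigma)$, which is precisely \eqref{eq:lassoobs}--\eqref{eq:fixedpoint}. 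For the inequality ``$\ge$'', along the SE curve $\sigma(\chi)^2=\sigma_w^2+\tfrac1\delta F(\sigma(\chi),\chi)\ge\sigma_w^2+\tfrac1\delta g(\sigma(\chi))$, which forces $\sigma(\chi)\ge\bar\sigma$, whence $F(\sigma(\chi),\chi)\ge g(\sigma(\chi))\ge g(\bar\sigma)$. For ``$\le$'', take $\chi^\star\in\argmin_{\chi\ge0}F(\bar\sigma,\chi)$: then $\bar\sigma$ solves the SE equation with $\chi=\chi^\star$, so $\sigma(\chi^\star)=\bar\sigma$ and $F(\sigma(\chi^\star),\chi^\star)=g(\bar\sigma)$, and it remains only to check $\lambda(\chi^\star)\ge0$ so that this pair is realized by an admissible tuning.

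The hard part is not the bookkeeping above but the analytic facts it rests on: (i) strict monotonicity of $\sigma\mapsto\sigma^2-\tfrac1\delta F(\sigma,\chi)$ and of $\sigma\mapsto\sigma^2-\tfrac1\delta g(\sigma)$, which is what makes every fixed point unique; (ii) the noise-monotonicity $\partial_\sigma F(\sigma,\chi)\ge0$; and (iii) surjectivity of the calibration $\chi\mapsto\lambda$ together with the sign of $\lambda(\chi^\star)$ (automatic when $\delta>1$ from $\eta_q'\in[0,1]$, and requiring a short first-order comparison when $\delta\le1$). All three reduce to properties of the proximal map $\eta_q(\cdot\,;\cdot)$ for $1\le q\le2$ collected in Lemma~\ref{prox:prop}; granting those, the second and third steps are elementary one-dimensional calculus, while the AMSE identity of the first step is quoted directly from Corollary~1 of \cite{weng2016overcoming}.
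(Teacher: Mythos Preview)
The paper does not actually prove Theorem~\ref{thm:eqpseudolip}: the sentence immediately preceding its statement says that the formula is taken directly as Corollary~1 of \cite{weng2016overcoming}, and no further argument is given. Your proposal therefore does considerably more than the paper --- you are reconstructing the content of that corollary from the fixed-$\lambda$ state-evolution characterization (which in \cite{weng2016overcoming} is the theorem, not the corollary; note the paper itself invokes ``Theorem~2.1 in \cite{weng2016overcoming}'' for the fixed-$\lambda$ ridge risk in Section~\ref{ols:proof}). Your architecture --- pass from $\lambda$ to $\chi$ via the calibration relation, then decouple the minimization over $\chi$ from the SE fixed point --- is exactly how the optimal-tuning formulas are established in that literature, so in substance you and the cited reference agree.

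There is, however, one genuine slip in your bookkeeping. You claim that uniqueness of $\bar\sigma$ ``follows once more from monotonicity of $\sigma\mapsto\sigma^2-\tfrac1\delta g(\sigma)$, which one gets from $g'(\sigma)=\partial_\sigma F(\sigma,\chi^\star(\sigma))\ge0$ via the envelope theorem.'' But $g'\ge0$ is the wrong inequality for this purpose: nonnegativity of $g'$ makes $-\tfrac1\delta g$ \emph{nonincreasing}, so by itself it cannot force $\sigma^2-\tfrac1\delta g(\sigma)$ to be increasing; what you actually need is the upper bound $\tfrac{dg}{d(\sigma^2)}<\delta$. That bound does hold (in the regime $\delta>1$ the paper is concerned with it follows from $g(\sigma)\le \sigma^2$ together with concavity-type estimates on the optimally tuned risk, and is part of what Corollary~1 of \cite{weng2016overcoming} packages), but it is not a consequence of $g'\ge0$. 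The same remark applies to your claimed monotonicity of $\sigma\mapsto\sigma^2-\tfrac1\delta F(\sigma,\chi)$ for fixed $\chi$. Once you replace ``$g'\ge0$'' by the correct derivative bound, your decoupling argument goes through as written.
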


Theorem \ref{thm:eqpseudolip} provides the first step in our analysis of LQLS. We first calculate $\bar{\sigma}$ from \eqref{eq:fixedpoint}. Then incorporating the solution $\bar{\sigma}$ into \eqref{eq:lassoobs} gives us the result of ${\rm AMSE}(\lambda_{*,q},q,\sigma_w)$. Given the distribution of $B$, the variance of the error $\sigma_w^2$, the number of observations (normalized by the number of predictors) $\delta$, and $q\in [1,2]$, it is straightforward to write a computer program to (numerically) find the solution of \eqref{eq:fixedpoint} and calculate the value of ${\rm AMSE}(\lambda_{*,q},q,\sigma_w)$.  However, it is needless to say that this approach does not shed much light on the performance of different LQLS estimates, since there are many factors involved in the computation and each affects the result in a non-trivial fashion. In this paper, we perform an analytical study on the solution of \eqref{eq:fixedpoint} and obtain an explicit characterization of AMSE in the high signal-to-noise ratio regime. The expressions we derived will offer us an accurate view of LQLS and quantify the impact of the distribution of $\beta$ on the performance of different LQLS. 

As is clear from the definition of a converging sequence in \ref{def:convseq} and Theorem \ref{thm:eqpseudolip}, the main property of $\beta$ that affects AMSE is the probability measure $p_{\beta}$. In the rest of the paper, we will assume $p_{\beta}$ does not have any point mass at zero. We use the notation $B$ to denote a one dimensional random variable distributed according to $p_{\beta}$. We present our findings for the high signal-to-noise ratio regime where the noise level $\sigma_w$ is either zero or small in Sections \ref{main:zero} and \ref{main:low}, respectively. We then discuss the implications of our analysis framework for classical asymptotics in Section \ref{largesample:imp}.

\section{Our main contributions} \label{sec:our}

\subsection{Phase transition } \label{main:zero}

Suppose that there is no noise in the linear model, i.e., $\sigma_w=0$. Our first goal in the phase transition analysis is to find the minimum value of $\delta$ for which ${\rm AMSE}(\lambda_{*,q},q, 0)=0$. Our next theorem characterizes the phase transition.  

\begin{theorem}\label{thm:phasetransition}
Let $q\in [1,2]$. If $\mathbb{E}|B|^2<\infty$, then we have
\begin{eqnarray*}
{\rm AMSE}(\lambda_{*,q},q,0)=
\begin{cases}
>0 & \mbox{~if~} \delta <1,  \\
=0 & \mbox{~if~}\delta >1.
\end{cases}
\end{eqnarray*}
\end{theorem}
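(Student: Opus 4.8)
\textbf{Proof strategy for Theorem~\ref{thm:phasetransition}.}
The plan is to translate everything into the scalar fixed-point equation \eqref{eq:fixedpoint} of Theorem~\ref{thm:eqpseudolip}. Fix $q\in[1,2]$, put $\sigma_w=0$, and define
\[
F(\sigma)\ \triangleq\ \min_{\chi\ge0}\ \mathbb{E}_{B,Z}\big[(\eta_q(B+\sigma Z;\chi)-B)^2\big],\qquad \sigma\ge0 .
\]
By Theorem~\ref{thm:eqpseudolip}, the relevant $\bar\sigma\ge0$ satisfies $\bar\sigma^2=\tfrac1\delta F(\bar\sigma)$ and ${\rm AMSE}(\lambda_{*,q},q,0)=F(\bar\sigma)=\delta\bar\sigma^2$, so the claim is equivalent to: $\bar\sigma=0$ if $\delta>1$, and $\bar\sigma>0$ if $\delta<1$. (When $\delta<1$ the noiseless equation also has the spurious root $\sigma=0$; to fix the selection one reads ${\rm AMSE}(\lambda_{*,q},q,0)=\lim_{\sigma_w\downarrow0}{\rm AMSE}(\lambda_{*,q},q,\sigma_w)$ and works with the unique positive root $\bar\sigma(\sigma_w)$ of the noisy equation, using continuity of ${\rm AMSE}$ in $\sigma_w$.) Everything then hinges on four properties of $F$: (P1) $F(\sigma)\le\sigma^2$ (take $\chi=0$, since $\eta_q(u;0)=u$); (P2) $F(\sigma)\le\mathbb{E}|B|^2<\infty$ (let $\chi\to\infty$, using $\eta_q(u;\chi)\to0$, $|\eta_q(u;\chi)|\le|u|$, and dominated convergence — here the hypothesis $\mathbb{E}|B|^2<\infty$ is used); (P3) $F$ is continuous with $F(0)=0$; and (P4) $\lim_{\sigma\downarrow0}F(\sigma)/\sigma^2=1$, whose proof, discussed last, is the only substantial step and also uses the standing assumption that $p_\beta$ has no atom at $0$.

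Assume (P1)--(P4). If $\delta>1$, (P1) gives $\bar\sigma^2=\tfrac1\delta F(\bar\sigma)\le\tfrac1\delta\bar\sigma^2$, so $(1-\tfrac1\delta)\bar\sigma^2\le0$, forcing $\bar\sigma=0$ and ${\rm AMSE}(\lambda_{*,q},q,0)=\delta\bar\sigma^2=0$. If $\delta<1$, let $H(\sigma)=\tfrac1\delta F(\sigma)-\sigma^2$. By (P4) there is $\sigma_0>0$ with $F(\sigma)>\delta\sigma^2$, i.e.\ $H(\sigma)>0$, on $(0,\sigma_0]$; by (P2), $H(\sigma)\le\tfrac1\delta\mathbb{E}|B|^2-\sigma^2\to-\infty$. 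Continuity of $H$ then yields a root in $(\sigma_0,\infty)$; applying the same reasoning to $\sigma_w^2+H$ shows that for every small $\sigma_w>0$ the unique positive root obeys $\bar\sigma(\sigma_w)>\sigma_0$, whence $\bar\sigma=\lim_{\sigma_w\downarrow0}\bar\sigma(\sigma_w)\ge\sigma_0$ and ${\rm AMSE}(\lambda_{*,q},q,0)=\delta\bar\sigma^2\ge\delta\sigma_0^2>0$.

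It remains to prove (P4), which I expect to be the main obstacle. The bound $F(\sigma)/\sigma^2\le1$ is (P1); for the reverse inequality the key tool is Stein's identity in the Gaussian variable $Z$. Since $u\mapsto\eta_q(u;\chi)$ is weakly differentiable with $\eta_q'(u;\chi)=\big(1+\chi q(q-1)|\eta_q(u;\chi)|^{q-2}\big)^{-1}\in(0,1]$ (to be read as $\eta_q'(u;\chi)=\mathbbm{1}\{|u|>\chi\}$ when $q=1$, and as $(1+2\chi)^{-1}$ when $q=2$), writing $\eta_q(B+\sigma Z;\chi)-B=\big(\eta_q(B+\sigma Z;\chi)-(B+\sigma Z)\big)+\sigma Z$ and integrating by parts in $Z$ gives
\[
\mathbb{E}\big[(\eta_q(B+\sigma Z;\chi)-B)^2\big]=\sigma^2\big(1-2\,\delta_{\sigma,\chi}\big)+\mathbb{E}\big[(\eta_q(B+\sigma Z;\chi)-(B+\sigma Z))^2\big],
\]
where $\delta_{\sigma,\chi}:=\mathbb{E}\big[1-\eta_q'(B+\sigma Z;\chi)\big]\in[0,1)$ and the second term on the right is nonnegative. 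Now pick a (near-)minimizing $\chi_\sigma$ in the definition of $F(\sigma)$. If $\chi_\sigma\to0$, then for a.e.\ $(B,Z)$ we have $B+\sigma Z\to B\neq0$ (because $p_\beta(\{0\})=0$) and hence $\eta_q(B+\sigma Z;\chi_\sigma)\to B$ and $\eta_q'(B+\sigma Z;\chi_\sigma)\to1$; dominated convergence gives $\delta_{\sigma,\chi_\sigma}\to0$, so $F(\sigma)\ge\sigma^2(1-2\delta_{\sigma,\chi_\sigma})=\sigma^2(1-o(1))$. If instead $\chi_\sigma$ does not tend to $0$, pass to a subsequence with $\chi_\sigma\to\chi_\infty\in(0,\infty]$; by $|\eta_q(u;\chi)|\le|u|$ and dominated convergence $F(\sigma)\to\mathbb{E}[(\eta_q(B;\chi_\infty)-B)^2]$ (interpreted as $\mathbb{E}[B^2]$ when $\chi_\infty=\infty$), a strictly positive constant since $\eta_q(B;\chi_\infty)\neq B$ on $\{B\neq0\}$ and $p_\beta(\{0\})=0$; this contradicts (P1), which forces $F(\sigma)\le\sigma^2\to0$. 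Hence the minimizer satisfies $\chi_\sigma\to0$, so $F(\sigma)/\sigma^2\to1$, proving (P4). The genuinely delicate point is that the above dichotomy controls \emph{all} thresholds $\chi\ge0$ simultaneously — in particular it rules out a threshold that could profit from a heavy concentration of $p_\beta$ near $0$ — and it is precisely the Stein identity together with $p_\beta(\{0\})=0$ that makes this uniform control possible.
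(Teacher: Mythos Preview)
Your argument is correct. The case $\delta>1$ is immediate from (P1), exactly as in the paper: $F(\sigma)\le\sigma^2$ forces $\bar\sigma=0$. The case $\delta<1$ rests entirely on (P4), and your Stein-identity/compactness proof of (P4) is sound; the dichotomy $\chi_\sigma\to0$ versus $\chi_\sigma\not\to0$ is handled correctly, and the dominated-convergence steps go through because $|\eta_q(u;\chi)|\le|u|$ and $\eta_q'\in[0,1]$.

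The paper's own route is different in emphasis. It does not prove Theorem~\ref{thm:phasetransition} directly but declares it a ``simple byproduct'' of the low-noise expansions in Section~3.2. Concretely, the paper works with the rescaled risk $R_q(\chi,\sigma)=\mathbb{E}(\eta_q(B/\sigma+Z;\chi)-B/\sigma)^2$, which satisfies $\sigma^2 R_q(\chi_q^*(\sigma),\sigma)=F(\sigma)$, and establishes (under the extra regularity hypotheses of Theorems~\ref{thm:densebbiggerthanmu} and~\ref{thm:densegeneralcase}) the much stronger statement $R_q(\chi_q^*(\sigma),\sigma)=1-c_q\sigma^2+o(\sigma^2)$ via a detailed analysis of the first-order optimality condition for $\chi_q^*$. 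The phase-transition claim then drops out of the first-order term. Your proof isolates exactly what is needed for the phase transition --- only $R_q(\chi_q^*(\sigma),\sigma)\to1$, i.e.\ your (P4) --- and obtains it with a short Stein/compactness argument that requires neither the boundedness-away-from-zero condition $\mathbb{P}(|B|>\mu)=1$ nor the near-zero tail condition $\mathbb{P}(|B|\le t)=O(t^{2-q+\epsilon})$. In that sense your proof is more self-contained and slightly more general for Theorem~\ref{thm:phasetransition} itself, while the paper's approach yields the theorem as a corollary of a finer result it needs anyway.

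One small point worth tightening: in the $\delta<1$ branch you invoke ``the unique positive root $\bar\sigma(\sigma_w)$'' and ``continuity of AMSE in $\sigma_w$'' without justification. Uniqueness is granted by Theorem~\ref{thm:eqpseudolip}; the continuity (indeed monotonicity) of $\sigma_w\mapsto\bar\sigma(\sigma_w)$ follows from the fixed-point equation and $F(\sigma)\le\sigma^2$, but it would be cleaner to say so explicitly.
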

The result can also be derived from several different frameworks including  the statistical dimension framework in \cite{amelunxen2013living}. But we derive it as a simple byproduct of our results in Section \ref{main:low}. So we do not discuss its proof here. This result is not surprising. Since, none of the coefficients is zero, the exact recovery is impossible if $n<p$.  Also, note that when $\delta>1$ even the ordinary least squares is capable of recovering $\beta$. Hence, the result of phase transition analysis does not provide any additional information on the performance of different regularizers. It is not even capable of showing the advantage of regularization techniques over the standard least squares algorithm. This is due to the fact that the result of Theorem \ref{thm:phasetransition} holds only in the noiseless case. Intuitively speaking, in the practical settings where the existence of the measurement noise  is inevitable, we expect different LQLS to behave differently. For instance, even though the signal under study is not sparse, when $p_\beta$ has a large mass around zero (it is approximately sparse), we expect the sparsity promoting LASSO to offer better performance than the other LQLS. However, the distribution $p_\beta$ does not have any effect on the phase transition diagram. Motivated by these concerns, in the next section, we investigate the performance of LQLS in the noisy setting, and study their noise sensitivity when the noise level $\sigma_w$ is small. The new analysis will offer more informative answers.

\subsection{Second-order noise sensitivity analysis of {\rm AMSE}} \label{main:low}

As an immediate generalization of the phase transition analysis, we can study the performance of different estimators in the presence of  a small amount of noise. More formally,  we derive the asymptotic expansion of ${\rm AMSE}(\lambda_{*,q},q,\sigma_w)$ for every $q\in [1,2]$, when $\sigma_w \rightarrow 0$. As will be discussed later, this generalization of phase transitions  presents  a more delicate analysis of LQLS. We start with the study of AMSE for the ordinary least squares (OLS). The result of OLS will be later used for comparison purposes.

\begin{lemma}\label{ols:property}

Consider the region $\delta>1$. For the OLS estimate $\hat{\beta}(0,q)$, we have
\[
{\rm AMSE}(0,q,\sigma_w)=\frac{\sigma^2_w}{1-1/\delta}.
\] 
\end{lemma}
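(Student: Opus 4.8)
The plan is to obtain the OLS AMSE as a limiting case of the general LQLS fixed-point characterization in Theorem \ref{thm:eqpseudolip}, by setting the tuning parameter to zero. When $\lambda = 0$, the LQLS objective reduces to the least-squares objective, and since we are in the regime $\delta > 1$ (so $X^\top X$ is almost surely invertible in the limit), the minimizer is exactly the OLS estimate $\hat\beta(0,q) = (X^\top X)^{-1} X^\top y$. On the state-evolution side, $\lambda = 0$ corresponds to taking $\chi \to 0$ in the proximal operator, and $\eta_q(u;0) = u$ for every $q \in [1,2]$. Hence $\eta_q(B + \bar\sigma Z; 0) - B = \bar\sigma Z$, so that $\mathbb{E}_{B,Z}(\eta_q(B+\bar\sigma Z;0)-B)^2 = \bar\sigma^2$.

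First I would argue that the choice $\chi = 0$ is actually the relevant one for the OLS estimate (rather than the optimizing $\chi$ appearing in \eqref{eq:lassoobs} and \eqref{eq:fixedpoint}), since Lemma \ref{ols:property} concerns $\hat\beta(0,q)$, i.e. $\lambda = 0$, not $\lambda = \lambda_{*,q}$. The appropriate statement from the message-passing / state-evolution theory is that for a fixed $\lambda$ (here $\lambda=0$, forcing $\chi = 0$), the effective noise variance $\bar\sigma^2$ satisfies the fixed-point relation with the $\min_{\chi}$ replaced by the value at the corresponding $\chi$. Substituting $\mathbb{E}_{B,Z}(\eta_q(B+\bar\sigma Z;0)-B)^2 = \bar\sigma^2$ into \eqref{eq:fixedpoint} then gives
\[
\bar\sigma^2 = \sigma_w^2 + \frac{1}{\delta}\,\bar\sigma^2,
\]
which, since $\delta > 1$ makes $1 - 1/\delta > 0$, has the unique nonnegative solution $\bar\sigma^2 = \sigma_w^2/(1 - 1/\delta)$. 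Finally, ${\rm AMSE}(0,q,\sigma_w)$ equals $\mathbb{E}_{B,Z}(\eta_q(B+\bar\sigma Z;0)-B)^2 = \bar\sigma^2 = \sigma_w^2/(1-1/\delta)$, which is the claimed formula.

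Alternatively, and as a sanity check, one can derive the same result directly: for $\delta > 1$ the OLS error is $\hat\beta(0,q) - \beta = (X^\top X)^{-1} X^\top w$, so $\|\hat\beta(0,q)-\beta\|_2^2/p = \frac{1}{p} w^\top X (X^\top X)^{-2} X^\top w$, and standard random-matrix results for Gaussian $X$ with entries $N(0,1/n)$ (the Marchenko–Pastur law, using $\frac{1}{n}\|w\|_2^2 \to \sigma_w^2$ and independence of $w$ from $X$) give the almost-sure limit $\sigma_w^2 \cdot \frac{1}{\delta}\,\mathbb{E}[1/\Lambda]$ where $\Lambda$ follows the appropriate Marchenko–Pastur distribution with ratio $1/\delta < 1$; evaluating $\mathbb{E}[1/\Lambda] = \delta/(\delta - 1)$ reproduces $\sigma_w^2/(1 - 1/\delta)$.

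The main obstacle I anticipate is purely bookkeeping: making rigorous the claim that the $\lambda = 0$ LQLS estimate is captured by the state-evolution recursion with $\chi = 0$ — i.e. justifying the interchange of "optimal $\chi$" in the stated theorem with "the $\chi$ induced by the fixed $\lambda$" — and checking that the fixed-point equation still has a unique solution in this degenerate case (which it does precisely because $\delta > 1$). Everything else is a one-line substitution. Since the paper presumably has already set up the $\lambda \leftrightarrow \chi$ correspondence for general $\lambda$ in its proof machinery, I would simply invoke that correspondence at $\lambda = 0$ and plug in.
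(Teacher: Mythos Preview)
Your approach is morally correct but differs from the paper's, and the issue you dismiss as ``purely bookkeeping'' is in fact the entire substance of the paper's argument.

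The paper does \emph{not} simply plug $\chi=0$ into the state-evolution recursion. Theorem~\ref{thm:eqpseudolip} as stated is only for the optimally tuned $\lambda_{*,q}$, and the more general fixed-$\lambda$ result that you would need (Theorem~2.1 of \cite{weng2016overcoming}) is established for $\lambda>0$; the boundary case $\lambda=0$ is exactly what requires justification. Instead, the paper first invokes that result for Ridge at a strictly positive $\lambda$ to obtain the explicit formula
\[
{\rm AMSE}(\lambda,2,\sigma_w)=\frac{\delta(4\chi^2\mathbb{E}|B|^2+\sigma_w^2)}{\delta(1+2\chi)^2-1},
\]
with $\chi$ an explicit function of $\lambda$, and observes that this tends to $\sigma_w^2/(1-1/\delta)$ as $\lambda\to 0$. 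It then writes $(X'X+\lambda I)^{-1}=(X'X)^{-1}-\lambda H$ and decomposes $\frac{1}{p}\|\hat\beta(0,q)-\beta\|_2^2 - \frac{\sigma_w^2}{1-1/\delta}$ into the Ridge error plus cross terms of order $O(\lambda)$ and $O(\lambda^2)$, bounding $\|HX'y\|_2$ via the a.s.\ limit $\sigma_{\min}(X)\to 1-1/\sqrt{\delta}>0$. Letting $n\to\infty$ then $\lambda\to 0$ finishes the proof. So the paper's route is: Ridge at $\lambda>0$, then a careful perturbation argument to pass to $\lambda=0$.

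Your Marchenko--Pastur sanity check is closer in spirit to a direct random-matrix proof, but note that the paper's converging-sequence framework does not assume $w$ has independent entries (only that its empirical distribution and second moment converge), and the paper explicitly remarks after the lemma that its proof avoids any such independence assumption. Your sketch invokes ``independence of $w$ from $X$,'' which in this framework means treating $w$ as deterministic; the quadratic-form concentration $\frac{1}{p}w^\top X(X^\top X)^{-2}X^\top w \to \sigma_w^2\cdot\frac{1}{\delta-1}$ then does hold a.s.\ in $X$, but making that rigorous under only the stated hypotheses is again nontrivial and is precisely what the paper's limiting-from-Ridge argument circumvents.
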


We prove the above lemma in Section \ref{ols:proof}. Note that the proof we presented there, has not used the independence of the noise elements that is often assumed in the analysis of OLS. 
Now we can discuss LQLS with the optimal choice of $\lambda$ defined in \eqref{eq:defoptlambda}. It turns out that the distribution $p_\beta$ impacts ${\rm AMSE}(\lambda_{*,q},q,\sigma_w)$ in a subtle way. For analysis purposes we first study the signals whose elements are bounded away from zero in Theorem \ref{thm:densebbiggerthanmu} and then study other distributions in Theorem \ref{thm:densegeneralcase}.

\begin{theorem}\label{thm:densebbiggerthanmu}
Consider the region $\delta>1$. Suppose that $\mathbb{P}(|B|>\mu)=1$ with $\mu$ being a positive constant and $\mathbb{E}|B|^2<\infty$. Then, for $q\in (1,2]$, as $\sigma_w\rightarrow 0$
\begin{eqnarray*}
{\rm AMSE}(\lambda_{*,q},q,\sigma_w)=\frac{\sigma_w^2}{1-1/\delta}-\frac{\delta^{3}(q-1)^2(\mathbb{E} |B|^{q-2})^2}{(\delta-1)^3  \mathbb{E} |B|^{2q-2}} \sigma_w^4+o(\sigma_w^4),
\end{eqnarray*}
and for $q=1$, as $\sigma_w\rightarrow 0$
\begin{eqnarray*}
{\rm AMSE}(\lambda_{*,q},q,\sigma_w)=\frac{\sigma_w^2}{1-1/\delta}-|o(e^{-\frac{\tilde{\mu}^2(\delta-1)}{\delta\sigma_w^2}})|,
\end{eqnarray*}
where $\tilde{\mu}$ is any positive number smaller than $\mu$.
\end{theorem}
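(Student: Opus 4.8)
The plan is to work entirely from the fixed-point characterization of Theorem~\ref{thm:eqpseudolip}. Write $R(\bar\sigma)\triangleq\min_{\chi\geq 0}\mathbb{E}_{B,Z}(\eta_q(B+\bar\sigma Z;\chi)-B)^2$, so that $\mathrm{AMSE}(\lambda_{*,q},q,\sigma_w)=R(\bar\sigma)$ with $\bar\sigma$ the unique solution of $\bar\sigma^2=\sigma_w^2+R(\bar\sigma)/\delta$. Taking $\chi=0$ gives $R(\bar\sigma)\leq\mathbb{E}_{B,Z}(\bar\sigma Z)^2=\bar\sigma^2$, so in the region $\delta>1$ the fixed-point equation forces $\sigma_w^2\leq\bar\sigma^2\leq\sigma_w^2/(1-1/\delta)$; hence $\bar\sigma\to0$ with $\bar\sigma\asymp\sigma_w$ as $\sigma_w\to0$. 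It therefore suffices to (i) obtain a sharp small-$\bar\sigma$ expansion of $R(\bar\sigma)$, and (ii) invert the resulting scalar relation between $\bar\sigma^2$ and $\sigma_w^2$ and substitute back into $\mathrm{AMSE}=R(\bar\sigma)$.

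For $q\in(1,2]$, the proximal operator is smooth away from the origin, with $\eta_q(u;\chi)=u-q|u|^{q-1}\mathrm{sign}(u)\,\chi+O(\chi^2)$ and $\frac{d}{du}\big[|u|^{q-1}\mathrm{sign}(u)\big]=(q-1)|u|^{q-2}$ (Lemma~\ref{prox:prop}). Since $\mathbb{P}(|B|>\mu)=1$, I would expand $\eta_q(B+\bar\sigma Z;\chi)-B$ jointly in $\bar\sigma$ and $\chi$ and integrate using $\mathbb{E}Z=0$, $\mathbb{E}Z^2=1$, independence of $B,Z$, and the finiteness of $\mathbb{E}|B|^{q-2}$ (bounded by $\mu^{q-2}$) and $\mathbb{E}|B|^{2q-2}$ (dominated by $1\vee|B|^2$), which should give
\[
\mathbb{E}_{B,Z}(\eta_q(B+\bar\sigma Z;\chi)-B)^2=\bar\sigma^2-2q(q-1)\,\mathbb{E}|B|^{q-2}\,\bar\sigma^2\chi+q^2\,\mathbb{E}|B|^{2q-2}\,\chi^2+(\text{higher order}).
\]
Minimizing this quadratic in $\chi$ places the optimal threshold at $\chi_*\asymp\bar\sigma^2$ and yields $R(\bar\sigma)=\bar\sigma^2-c\,\bar\sigma^4+o(\bar\sigma^4)$ with $c=(q-1)^2(\mathbb{E}|B|^{q-2})^2/\mathbb{E}|B|^{2q-2}$. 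Substituting into $\bar\sigma^2=\sigma_w^2+R(\bar\sigma)/\delta$ gives $\bar\sigma^2=\frac{\delta}{\delta-1}\sigma_w^2+O(\sigma_w^4)$ to leading order and then, re-substituting, $\bar\sigma^2=\frac{\delta}{\delta-1}\sigma_w^2-\frac{\delta^2 c}{(\delta-1)^3}\sigma_w^4+o(\sigma_w^4)$; feeding this into $\mathrm{AMSE}=\bar\sigma^2-c\bar\sigma^4+o(\bar\sigma^4)$ and collecting the $\sigma_w^4$ coefficient (the combination $\frac{1}{(\delta-1)^3}+\frac{1}{(\delta-1)^2}=\frac{\delta}{(\delta-1)^3}$) produces the stated formula.

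For $q=1$, $\eta_1(\cdot;\chi)$ is soft-thresholding: on $\{|B+\bar\sigma Z|>\chi\}$ the error is $\bar\sigma Z-\chi\,\mathrm{sign}(B+\bar\sigma Z)$, and on its complement it is $-B$. Because $|B|>\mu$, that complement forces $|Z|>(\mu-\chi)/\bar\sigma$ and so has a Gaussian-tail probability, so its contribution --- together with the rare discrepancy between $\mathrm{sign}(B+\bar\sigma Z)$ and $\mathrm{sign}(B)$ --- is $|o(e^{-\tilde\mu^2/(2\bar\sigma^2)})|$ for any $\tilde\mu<\mu$. Differentiating the objective at $\chi=0$ gives $-4\bar\sigma\,\mathbb{E}\,\phi(|B|/\bar\sigma)$ (using $\mathbb{E}[Z\,\mathrm{sign}(b+\bar\sigma Z)]=2\phi(b/\bar\sigma)$), which is itself exponentially small; balancing it against the $\chi^2$ growth gives $R(\bar\sigma)=\bar\sigma^2-|o(e^{-\tilde\mu^2/\bar\sigma^2})|$, and the leading-order relation $\bar\sigma^2\sim\frac{\delta}{\delta-1}\sigma_w^2$ converts the exponent into $\tilde\mu^2(\delta-1)/(\delta\sigma_w^2)$.

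The main obstacle is the rigorous, uniform-in-$\chi$ control of the ``higher order'' remainders near the optimum $\chi_*\asymp\bar\sigma^2$: the second-order Taylor remainders of $\eta_q$ and of $u\mapsto|u|^{q-1}\mathrm{sign}(u)$ involve negative powers of $|B+\bar\sigma Z|$, which blow up where $B+\bar\sigma Z$ is near $0$. The assumption $|B|>\mu$ confines this to a Gaussian-tail event, and I would handle it by splitting every expectation on $\{|Z|\leq\mu/(2\bar\sigma)\}$ (where the argument stays bounded away from $0$ and the Taylor estimates apply) and its complement (where everything is exponentially small) --- essentially the localization underlying the chaining argument used elsewhere in the paper. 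A secondary point is confirming a priori that the minimizer $\chi_*$ lies in the claimed regime, which follows from strict convexity of the leading quadratic in $\chi$ together with the two-sided bound $\sigma_w^2\le\bar\sigma^2\le\sigma_w^2/(1-1/\delta)$.
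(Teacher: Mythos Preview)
Your proposal is correct and follows the same overall architecture as the paper: pass to the scalar fixed-point equation, expand the optimally-tuned risk $R(\bar\sigma)$ to second order as $\bar\sigma\to0$, deduce $\bar\sigma^2=\tfrac{\delta}{\delta-1}\sigma_w^2+O(\sigma_w^4)$, and substitute back. The quadratic-in-$\chi$ structure you isolate and the splitting on $\{|Z|\le\mu/(2\bar\sigma)\}$ are exactly what drive the paper's argument.

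The execution differs in one respect worth noting. You Taylor-expand $\eta_q(B+\bar\sigma Z;\chi)$ in both $\bar\sigma$ and $\chi$ and then control remainders. The paper instead writes $R_q(\chi,\sigma)-1$ via Stein's lemma as $\chi^2 q^2\,\mathbb{E}|\eta_q|^{2q-2}-2\chi q(q-1)\,\mathbb{E}\frac{|\eta_q|^{q-2}}{1+\chi q(q-1)|\eta_q|^{q-2}}$ (Equation~\eqref{epsilon:1}), which already isolates the quadratic-plus-linear structure in $\chi$ without differentiating in $\bar\sigma$; the dangerous region near $B+\sigma Z=0$ is then handled by the same split you describe. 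To pin down the optimal $\chi_q^*(\sigma)$, the paper does not minimize an approximate quadratic but analyzes the first-order condition $\partial_\chi R_q=0$ directly (again via Stein), first ruling out exponentially small $\chi^*$ by comparing $R_q$ at a trial $\chi=C\sigma^q$ with $R_q$ at $\chi=O(e^{-c/\sigma})$. This sidesteps the mild circularity in your ``strict convexity of the leading quadratic'' justification; your route works too, but you would need to show separately that $R(\bar\sigma)$ at the candidate $\chi\asymp\bar\sigma^2$ beats $R$ at any $\chi$ outside that regime, not merely that the quadratic approximation has its minimum there.

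One small correction: the chaining argument you allude to is used only for Theorem~\ref{thm:densegeneralcase}, where $|B|$ can be arbitrarily close to $0$. Under the hypothesis $\mathbb{P}(|B|>\mu)=1$ of the present theorem, the single split $\{|Z|\le\mu/(2\bar\sigma)\}$ versus its complement suffices, and no nested intervals are needed.
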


The proof can be found in Section \ref{proof:thm3}. We observe that the first dominant term in the expansion of ${\rm AMSE}(\lambda_{*,q},q,\sigma_w)$  is exactly the same for all values of  $q$, including $q=1$ and is equal to $\sigma_w^2 /(1-1/\delta)$. This is also the same as the AMSE of the OLS. We may consider this term as the `phase transition' term, since it will go to zero only when $\delta>1$. In a nutshell, the first term in the expansion provides the phase transition information. However, we are able to derive the second order term for ${\rm AMSE}(\lambda_{*,q},q,\sigma_w)$. This term gives us what is beyond phase transition analysis. The impact of the signal distribution $p_{\beta}$ and the regularizer $\ell_q$, that is omitted in PT diagram, is revealed in the second order term. As a result, to compare the performance of LQLS with different values of $q$ in the low noise regime, we can compare their second order terms. 

First note that all the regularizers that are studied in Theorem \ref{thm:densebbiggerthanmu} improve the performance of OLS. When the distribution of the coefficients is bounded away from $0$, no significant gain is obtained from LASSO since the second dominant term in the expansion of AMSE is exponentially small. However, the rate of the second order term exhibits an interesting transition from exponential to a polynomial decay when $q$ increases from 1. In fact, it seems that bridge regularizers with $q>1$ offer more substantial improvements over OLS. Even though LASSO is suboptimal, it is not clear which value of $q$ provides the best performance here.  
 Among other LQLS with $q\in (1,2]$, the optimality is determined by the constant involved in the second order term (they all have the same orders). To simplify our discussions, define 
\[
C_q= \frac{(q-1)^2(\mathbb{E}|B|^{q-2})^2}{\mathbb{E}|B|^{2q-2}}, \quad q^*=\argmax_{1<q\leq 2}C_q.
\]
Then LQLS with $q=q^*$ will perform the best. To provide some insights on $q^*$, we focus on a special family of distributions. 

\begin{lemma}\label{lemma:twopoint}
Consider the two-point mixture $|B| \sim \alpha\Delta_{\mu_1}+(1-\alpha)\Delta_{\mu_2}$, where $\Delta_{\mu}$ denotes the probability measure putting mass 1 at $\mu$, $0<\mu_1\leq \mu_2, \alpha \in (0,1)$. Then $q^*=2$ when $\mu_1=\mu_2$, and $q^* \rightarrow 1$ as $\mu_2/\mu_1\rightarrow \infty$. 
\end{lemma}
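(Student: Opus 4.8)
The plan is to analyze the quantity $C_q = \frac{(q-1)^2(\mathbb{E}|B|^{q-2})^2}{\mathbb{E}|B|^{2q-2}}$ directly on the two-point family $|B| \sim \alpha\Delta_{\mu_1} + (1-\alpha)\Delta_{\mu_2}$, and determine where its maximum over $q \in (1,2]$ is attained in the two limiting regimes. Writing $f(q) = \mathbb{E}|B|^{q-2} = \alpha\mu_1^{q-2} + (1-\alpha)\mu_2^{q-2}$ and $g(q) = \mathbb{E}|B|^{2q-2} = \alpha\mu_1^{2q-2} + (1-\alpha)\mu_2^{2q-2}$, we have $C_q = (q-1)^2 f(q)^2 / g(q)$. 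For the degenerate case $\mu_1 = \mu_2 = \mu$, the moment factors collapse: $f(q) = \mu^{q-2}$, $g(q) = \mu^{2q-2}$, so $f(q)^2/g(q) = \mu^{2q-4}/\mu^{2q-2} = \mu^{-2}$, a constant in $q$. Hence $C_q = (q-1)^2 \mu^{-2}$, which is strictly increasing on $(1,2]$, so $q^* = 2$. That handles the first assertion immediately.

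For the limit $\mu_2/\mu_1 \to \infty$, I would reparametrize by setting $\mu_1 = 1$ (WLOG, by scale invariance — note $C_q$ scales as $\mu_1^{-2}$ overall so the location of the argmax is scale-free) and $t = \mu_2 \to \infty$. Then $f(q) = \alpha + (1-\alpha)t^{q-2}$ and $g(q) = \alpha + (1-\alpha)t^{2q-2}$. For $q \in (1,2)$ fixed, $t^{q-2} \to 0$ while $t^{2q-2} \to \infty$, so $f(q) \to \alpha$ and $g(q) \to \infty$, giving $C_q \to 0$. At $q = 2$ exactly, $f(2) = 1$ and $g(2) = \alpha + (1-\alpha)t^2 \to \infty$, so $C_2 \to 0$ as well. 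Thus the supremum of $C_q$ over any compact subset of $(1,2]$ bounded away from $1$ tends to $0$; the only place mass can concentrate is near $q = 1$. To pin it down, I would examine the behavior of $C_q$ for $q = 1 + \epsilon$ with $\epsilon$ small, writing $t^{q-2} = t^{\epsilon-1} = t^{-1}t^{\epsilon}$ and $t^{2q-2} = t^{2\epsilon}$, and track how $C_{1+\epsilon}$ behaves as a function of $\epsilon$ and $t$ jointly; one expects that the maximizing $\epsilon = \epsilon(t)$ satisfies $\epsilon(t) \to 0$ as $t \to \infty$, which is the claim $q^* \to 1$. The cleanest route is probably to show that for any fixed $\epsilon_0 \in (0,1]$, there is $t$ large enough that $\sup_{q \in [1+\epsilon_0, 2]} C_q < C_{1+\epsilon(t)}$ for some $\epsilon(t) < \epsilon_0$; combined with continuity of $q \mapsto C_q$ on $(1,2]$ and the fact (from the formula) that $C_q \to 0$ as $q \downarrow 1$, this forces the argmax to escape any fixed neighborhood's complement, i.e. $q^*(t) \to 1$.

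The main obstacle I anticipate is the second part: unlike the degenerate case, there is no closed form for $q^*$, so one must argue about the location of the argmax through careful asymptotic estimates rather than an explicit computation. The delicate point is that near $q = 1$ the factor $(q-1)^2$ pushes $C_q$ toward $0$, so the maximizer sits at a balance between $(q-1)^2$ growing and $f(q)^2/g(q)$ shrinking; one has to quantify this tradeoff uniformly in $t$ and verify that the optimal balance point drifts to $1$ as $t \to \infty$. A convenient device is to pass to logarithms: $\log C_q = 2\log(q-1) + 2\log f(q) - \log g(q)$, differentiate in $q$, and analyze the first-order condition $\frac{2}{q-1} + \frac{2 f'(q)}{f(q)} - \frac{g'(q)}{g(q)} = 0$; for large $t$ the dominant contributions to $f'/f$ and $g'/g$ come from the $t$-dependent terms (with logarithmic factors $\log t$), and one checks that any solution must have $q - 1 = O(1/\log t) \to 0$. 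I would also need to confirm the argmax is attained in the interior $(1,2)$ for $t$ large (not at the endpoint $q = 2$), which follows since $C_2 \to 0$ while $C_q$ can be made bounded away from $0$ for suitable $q$ close to $1$ — for instance by a direct lower bound $C_{1+\epsilon} \gtrsim \epsilon^2$ with $\epsilon$ chosen of order $1/\log t$, making $C_{1+\epsilon}$ of order $1/(\log t)^2$, which eventually exceeds $C_2 = O(1/t^2)$.
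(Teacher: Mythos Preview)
Your proposal is correct and aligns with the paper's approach. For $\mu_1=\mu_2$ your computation matches exactly; for $\mu_2/\mu_1\to\infty$ the paper carries out precisely your last suggestion: it fixes the test point $\bar q = 1 + 1/\log\kappa$ (your $\epsilon \sim 1/\log t$) and shows directly, via a lower bound on the ratio $C_{\bar q}/C_q$ that diverges like $\kappa^{2\epsilon_0}(\log\kappa)^{-2}$, that $C_{\bar q}$ eventually dominates $\sup_{q\in[1+\epsilon_0,2]}C_q$ for every fixed $\epsilon_0>0$ --- bypassing the first-order-condition analysis you also sketch.
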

\begin{proof}
When $\mu_1=\mu_2$, it is clear that $C_q=(q-1)^2\mu_1^{-2}$ and thus $q^*=2$. We now consider $0<\mu_1<\mu_2$. Denote $\kappa=\mu_2/\mu_1$. We can then write $C_q$ as:
\begin{eqnarray*}
C_q=\frac{(q-1)^2(\alpha \mu_1^{q-2}+(1-\alpha)\mu_2^{q-2})^2}{\alpha \mu_1^{2q-2}+(1-\alpha)\mu_2^{2q-2}}
=\frac{(q-1)^2(\alpha+(1-\alpha)\kappa^{q-2})^2}{\mu_1^2(\alpha+(1-\alpha)\kappa^{2q-2})}.
\end{eqnarray*}
Define $\bar{q}=1+\frac{1}{\log \kappa}$. We would like to show that for any $\epsilon>0$, $C_{\bar{q}}>\max_{1+\epsilon \leq q \leq 2}C_q$ for $\kappa$ large enough. That will give us $q^*\in (1,1+\epsilon)$ and hence finishes the proof. To show that, note that for any $q\in [1+\epsilon,2], \kappa \geq 1$,
\begin{eqnarray*}
\frac{C_{\bar{q}}}{C_q}&=&\frac{(\alpha+(1-\alpha)\kappa^{\bar{q}-2})^2}{(\alpha+(1-\alpha)\kappa^{q-2})^2} \cdot \frac{\alpha+(1-\alpha)\kappa^{2q-2}}{\alpha+(1-\alpha)\kappa^{2\bar{q}-2}} \cdot \frac{(\bar{q}-1)^2}{(q-1)^2} \\
&\geq&\alpha^2\cdot \frac{\alpha+(1-\alpha)\kappa^{2\epsilon}}{\alpha+(1-\alpha)e^2}\cdot (\bar{q}-1)^2 \geq 
\frac{\alpha^2(1-\alpha)}{\alpha+(1-\alpha)e^2}\cdot \kappa^{2\epsilon}(\log\kappa)^{-2} \rightarrow \infty.
\end{eqnarray*} 
Therefore, $C_{\bar{q}}>\max_{1+\epsilon \leq q \leq 2}C_q$ when $\kappa$ is sufficiently large.
\end{proof}
\vspace{0.2cm}

Lemma \ref{lemma:twopoint} implies that ridge ($q=2$) regularizer is optimal when the two point mixture components coincide, and the optimal value of $q$ will shift towards $1$ as the ratio of the two points goes off to infinity. Intuitively speaking, one would expect ridge to penalize large signals more aggressively than $q<2$. Hence, in cases the signal has a large dynamic range, ridge penalizes the large signal values more and is not expected to outperform other values of $q$. Note that for the two-point mixture signals, the optimal value of $q$ can be arbitrarily close to 1, however LASSO can never be optimal because its second order term is exponentially small. 

Theorem \ref{thm:densebbiggerthanmu} only studied distributions $p_\beta$ that are bounded away from zero. We next study a more informative and practical case where the distribution of $\beta$ has more mass around zero.

\begin{theorem}\label{thm:densegeneralcase}
Consider the region $\delta>1$ and assume $\mathbb{E}|B|^2<\infty$. For any given $q\in (1,2)$, suppose that $\mathbb{P}(|B|\leq t)=O(t^{2-q+\epsilon})$ (as $t \rightarrow 0$) with $\epsilon$ being any positive constant, then as $\sigma_w\rightarrow 0$
\begin{eqnarray*}
{\rm AMSE}(\lambda_{*,q},q,\sigma_w)=\frac{\sigma_w^2}{1-1/\delta}-\frac{\delta^{3}(q-1)^2(\mathbb{E} |B|^{q-2})^2}{(\delta-1)^3  \mathbb{E} |B|^{2q-2}} \sigma_w^4+o(\sigma_w^4).
\end{eqnarray*}
For $q=2$, as $\sigma_w\rightarrow 0$
\[
{\rm AMSE}(\lambda_{*,2},2,\sigma_w)=\frac{\sigma_w^2}{1-1/\delta}-\frac{\delta^{3}}{(\delta-1)^3  \mathbb{E} |B|^2} \sigma_w^4+o(\sigma_w^4).
\]
For $q=1$, suppose that $\mathbb{P}(|B| \leq t)=\Theta(t^\ell)$ with $\ell>0$, then as $\sigma_w\rightarrow 0$
\[
-|\Theta (\sigma_w^{2\ell+2})| \cdot \bigg(\underbrace{\log \log \ldots \log}_{m\  \rm times} \left(\frac{1}{\sigma_w}\right) \bigg)^{\ell}\lesssim {\rm AMSE}(\lambda_{*,1},1,\sigma_w)-\frac{\sigma^2_w}{1-1/\delta} \lesssim - |\Theta(\sigma_w^{2\ell+2})|,
\]
where $m$ can be any natural number.
\end{theorem}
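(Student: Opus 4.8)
\emph{Reduction.} By Theorem~\ref{thm:eqpseudolip}, set $R_q(\sigma):=\min_{\chi\ge0}\mathbb{E}_{B,Z}(\eta_q(B+\sigma Z;\chi)-B)^2$; then ${\rm AMSE}(\lambda_{*,q},q,\sigma_w)=R_q(\bar\sigma)$, where $\bar\sigma$ is the unique root of $\bar\sigma^2=\sigma_w^2+\tfrac1\delta R_q(\bar\sigma)$, i.e. ${\rm AMSE}=\delta(\bar\sigma^2-\sigma_w^2)$. Taking $\chi=0$ gives $R_q(\sigma)\le\sigma^2$, so $\bar\sigma^2\le\sigma_w^2/(1-1/\delta)\to0$ and $R_q(0)=0$. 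Hence the whole statement reduces to the small-$\sigma$ behaviour of the \emph{scalar} quantity $R_q(\sigma)$: once $R_q(\sigma)=\sigma^2-C_q\sigma^4+o(\sigma^4)$ is established, with $C_q=(q-1)^2(\mathbb{E}|B|^{q-2})^2/\mathbb{E}|B|^{2q-2}$ (read as $1/\mathbb{E}|B|^2$ at $q=2$), a two-step bootstrap in the fixed-point equation — first $\bar\sigma^2=\tfrac\delta{\delta-1}\sigma_w^2+o(\sigma_w^2)$, then feeding this into the $\sigma^4$ term — gives $\bar\sigma^2=\tfrac\delta{\delta-1}\sigma_w^2-\tfrac{\delta^2C_q}{(\delta-1)^3}\sigma_w^4+o(\sigma_w^4)$, whence ${\rm AMSE}=\delta(\bar\sigma^2-\sigma_w^2)=\tfrac{\sigma_w^2}{1-1/\delta}-\tfrac{\delta^3C_q}{(\delta-1)^3}\sigma_w^4+o(\sigma_w^4)$; the first term agrees with Lemma~\ref{ols:property}. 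The case $q=1$ is handled by turning a sandwich bound on $R_1(\sigma)$ into the stated sandwich via the same bootstrap.

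\emph{The case $q\in(1,2]$.} Write $\eta_q=\eta_q(B+\sigma Z;\chi)$. Using the proximal identities from Lemma~\ref{prox:prop} — $\eta_q-(B+\sigma Z)=-\chi q\,|\eta_q|^{q-1}\mathrm{sign}(\eta_q)$ and $\partial_u\eta_q(u;\chi)=(1+\chi q(q-1)|\eta_q|^{q-2})^{-1}$ — write $\eta_q-B=\sigma Z-\chi q|\eta_q|^{q-1}\mathrm{sign}(\eta_q)$, square, and apply Gaussian integration by parts ($\mathbb{E}[Zh(Z)]=\mathbb{E}[h'(Z)]$) to the cross term, which yields the exact identity
\[
F(\chi):=\mathbb{E}(\eta_q-B)^2=\sigma^2-2\sigma^2\chi q(q-1)\,\mathbb{E}\!\left[\frac{|\eta_q|^{q-2}}{1+\chi q(q-1)|\eta_q|^{q-2}}\right]+\chi^2q^2\,\mathbb{E}|\eta_q|^{2q-2}.
\]
(For $q=2$, $\eta_2(u;\chi)=u/(1+2\chi)$, so $F(\chi)=(\sigma^2+4\chi^2\mathbb{E}|B|^2)/(1+2\chi)^2$, which one minimises in one line.) Since $F(0)=\sigma^2$ and the middle term is negative, $R_q(\sigma)<\sigma^2$, and a crude estimate shows the minimiser $\chi_\ast$ satisfies $\chi_\ast=\Theta(\sigma^2)$ (if $\chi/\sigma^2\to0$ the middle term is $o(\sigma^4)$; if $\chi/\sigma^2\to\infty$ the last term, of order $\chi^2\mathbb{E}|B|^{2q-2}$, overwhelms it). It then suffices to take $\chi=c\sigma^2$ with $c$ in a fixed bounded interval, let $\sigma\to0$ using $\eta_q\to B$ a.s., and — \emph{provided} $\mathbb{E}|\eta_q|^{2q-2}\to\mathbb{E}|B|^{2q-2}$ and $\mathbb{E}[|\eta_q|^{q-2}/(1+\chi q(q-1)|\eta_q|^{q-2})]\to\mathbb{E}|B|^{q-2}$ uniformly in $c$ — deduce $F(c\sigma^2)=\sigma^2-(2q(q-1)\mathbb{E}|B|^{q-2}\,c-q^2\mathbb{E}|B|^{2q-2}\,c^2)\sigma^4+o(\sigma^4)$; maximising the quadratic in $c$ produces exactly the constant $C_q$, and combined with the crude bounds outside the $\Theta(\sigma^2)$ window this gives $R_q(\sigma)=\sigma^2-C_q\sigma^4+o(\sigma^4)$.

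\emph{The main obstacle.} The limit $\mathbb{E}|\eta_q|^{2q-2}\to\mathbb{E}|B|^{2q-2}$ is immediate by dominated convergence ($|\eta_q|\le|B+\sigma Z|$, $2q-2\in(0,2]$, $\mathbb{E}|B|^2<\infty$). The delicate one is the other: because $q-2<0$ the integrand involves a \emph{negative} power of $|\eta_q|$, and $\eta_q(u;\chi)$ is essentially flat on a window of width $\asymp\chi^{1/(2-q)}$ around $u=0$, so on realizations with $B+\sigma Z$ near $0$ the near-$\chi=0$ control of $\eta_q$ collapses and a naive Taylor expansion fails. This is precisely where the hypothesis $\mathbb{P}(|B|\le t)=O(t^{2-q+\epsilon})$ enters: it guarantees $\mathbb{E}|B|^{q-2}<\infty$ with genuine $\epsilon$-room, and the plan is a multi-scale (chaining-type) decomposition of $\{|B+\sigma Z|\le\rho_\sigma\}$ matching the scales $\chi^{1/(2-q)}\ll\sigma\ll\rho_\sigma\to0$. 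On $\{|B+\sigma Z|>\rho_\sigma\}$ one has $|\eta_q|\asymp|B+\sigma Z|$ and $\chi q(q-1)|\eta_q|^{q-2}\to0$, so the integrand is $\asymp|\eta_q|^{q-2}$ and converges by a uniformly-integrable dominated-convergence argument; on $\{|B+\sigma Z|\le\rho_\sigma\}$ one uses the elementary bound $|\eta_q|^{q-2}/(1+\chi q(q-1)|\eta_q|^{q-2})\le(\chi q(q-1))^{-1}$, so its contribution to the middle term of $F$ is at most $2\sigma^2\,\mathbb{P}(|B+\sigma Z|\le\rho_\sigma)=O(\sigma^2\rho_\sigma^{2-q+\epsilon})+(\text{Gaussian tail})$, which the $\epsilon$-slack allows us to make $o(\sigma^4)$ by choosing $\rho_\sigma$ decaying suitably. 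Carrying out this decomposition — the precise matching of the three scales so that the near-zero region never pollutes the $\sigma^4$-coefficient — is the technical heart, and I expect it to be the main difficulty; it is the step the authors call the "delicate chaining argument."

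\emph{The case $q=1$.} Soft-thresholding satisfies $\eta_1(u;\chi)-u=-\mathrm{sign}(u)\min(|u|,\chi)$ and $\partial_u\eta_1(u;\chi)=\mathbf{1}_{\{|u|>\chi\}}$, so the same integration-by-parts computation gives, with $u=B+\sigma Z$,
\[
F_1(\chi)-\sigma^2=\chi^2\big(1-\mathbb{P}(|u|\le\chi)\big)-2\sigma^2\,\mathbb{P}(|u|\le\chi)+\mathbb{E}\big[u^2\mathbf{1}_{\{|u|\le\chi\}}\big].
\]
Under $\mathbb{P}(|B|\le t)=\Theta(t^\ell)$ one shows, by an integration-by-parts/Laplace estimate, that the density of $B+\sigma Z$ near $0$ is $\Theta(\sigma^{\ell-1})$; hence for $\chi\ll\sigma$, $\mathbb{P}(|u|\le\chi)=\Theta(\sigma^{\ell-1}\chi)$ and $\mathbb{E}[u^2\mathbf{1}_{\{|u|\le\chi\}}]=O(\sigma^{\ell-1}\chi^3)$, so $F_1(\chi)-\sigma^2=\chi^2-\Theta(\sigma^{\ell+1}\chi)+O(\sigma^{\ell-1}\chi^3)$; optimising yields $\chi_\ast=\Theta(\sigma^{\ell+1})\ll\sigma$ (consistent with the assumed regime) and $R_1(\sigma)-\sigma^2=-\Theta(\sigma^{2\ell+2})$ — this is the achievability (right-hand) bound. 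The matching upper bound on the gain requires controlling $F_1(\chi)-\sigma^2$ from below uniformly over all $\chi\ge0$; since $\mathbb{P}(|B|\le t)=\Theta(t^\ell)$ pins down the small-ball behaviour only up to constants, propagating that imprecision through the near-optimal choice of $\chi$ and back into $R_1$ costs a slack that can be absorbed only into an iterated logarithm, giving the left-hand bound $-|\Theta(\sigma_w^{2\ell+2})|(\log\cdots\log(1/\sigma_w))^\ell$ for every number $m$ of iterations. Finally, inserting $R_1(\sigma)$ into the bootstrap of the first paragraph converts these into the stated sandwich for ${\rm AMSE}(\lambda_{*,1},1,\sigma_w)$.
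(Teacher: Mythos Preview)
Your high-level reduction is exactly the paper's: reduce to the scalar risk, derive the Stein identity, localise $\chi_\ast$, and bootstrap through the fixed point. Your identity for $F(\chi)$ and the crude localisation of $\chi_\ast$ are correct. The genuine gap is in the step you yourself flag as ``the main difficulty'': the two–region split you actually write down does not close.

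Concretely, for $q\in(1,2)$ you propose to cut at $\{|B+\sigma Z|\le\rho_\sigma\}$, use $g_\sigma\le(\chi q(q-1))^{-1}$ on the inside, and claim the inner contribution to the middle term is $2\sigma^2\,\mathbb{P}(|B+\sigma Z|\le\rho_\sigma)=O(\sigma^2\rho_\sigma^{2-q+\epsilon})$. But the estimate $\mathbb{P}(|B+\sigma Z|\le\rho_\sigma)=O(\rho_\sigma^{2-q+\epsilon})$ (plus a Gaussian tail) only holds when $\rho_\sigma\gg\sigma$; and with $\rho_\sigma\ge\sigma$ one has $\sigma^2\rho_\sigma^{2-q+\epsilon}\ge\sigma^{4-q+\epsilon}$, which is $o(\sigma^4)$ only if $\epsilon>q$, contrary to the hypothesis that $\epsilon$ may be arbitrarily small. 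Conversely, if $\rho_\sigma\lesssim\sigma$, the dominated-convergence argument on the outside fails because $|B+\sigma Z|^{q-2}$ is not uniformly integrable down to that scale (its integral over $\{|B+\sigma Z|\le\sigma\}$ is of order $\sigma^{q-2}\to\infty$). So no single threshold on $|B+\sigma Z|$ can satisfy both requirements. The paper avoids this by (i) making the outer cut \emph{$B$-dependent}, namely $\{|B+\sigma Z|\ge|B|^c/2\}$ with $c>1$ close to $1$, so that the dominating function is $|B|^{c(q-2)}$ (integrable by the $\epsilon$-slack) rather than $|B+\sigma Z|^{q-2}$; and (ii) handling the complementary region not by the crude $1/\chi$ bound but by a genuine \emph{chain} of nested intervals $\mathcal I_{-1}\subset\mathcal I_0\subset\cdots\subset\mathcal I_{m^*+2}\subset\mathcal J_0\subset\mathcal J_1\subset\cdots$, with carefully tuned widths (specific powers of $\sigma$ with logarithmic corrections), on each annulus of which $|\eta_q(b/\sigma+z;\chi)|$ is comparable to the inner radius. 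Each annular integral is then shown to vanish, and the number of annuli $m^*$ is dictated by $q$ and $\epsilon$. Your phrase ``multi-scale (chaining-type)'' is the right label, but the mechanism you sketch is a single scale, and that is precisely what fails.

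A smaller issue in the $q=1$ part: you assert that the density of $B+\sigma Z$ at $0$ is $\Theta(\sigma^{\ell-1})$. The hypothesis $\mathbb{P}(|B|\le t)=\Theta(t^\ell)$ does not control the density of $|B|$ pointwise (think $F(t)=t^\ell(2+\sin\log t)$), so one cannot get a clean two-sided $\Theta$ for $\mathbb E\phi(B/\sigma)$; this is exactly why the iterated-logarithm slack appears on one side of the sandwich and not the other. The paper obtains $\Theta(\sigma^\ell)\le\mathbb E\phi(\chi^*-B/\sigma)+\mathbb E\phi(\chi^*+B/\sigma)\le\Theta(\sigma^\ell(\log_m(1/\sigma))^{\ell/2})$ and propagates these bounds through the optimality equation for $\chi^*$; your density heuristic would incorrectly erase the log on the lower side.
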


The proof is presented in Section \ref{proof:thm4}. Before discussing the implications of this result, let us mention a few points about the conditions that are imposed in this theorem. Note that  the condition $\mathbb{P}(|B|\leq t)=O(t^{2-q+\epsilon})$ for $q\in (1,2)$ is necessary otherwise $\mathbb{E}|B|^{q-2}$ appearing in the second order term will be unbounded. Intuitively speaking, for every $q \in (1,2)$ even though $\mathbb{P}(|B|\leq t)=O(t^{2-q+\epsilon})$, the probability density function (pdf) of $|B|$ can still go to infinity at zero. However, the condition requires that it should not go to infinity too fast. Now, we would like to explain some of the interesting implications of this theorem. 
\begin{enumerate}
\item Compared to the results of Theorem \ref{thm:densebbiggerthanmu}, we see that the expansion of ${\rm AMSE}(\lambda_{*,q},q,\sigma_w)$ for $q\in (1,2]$ in Theorem \ref{thm:densegeneralcase} remains the same for more general $B$, while the rate of the second order term for LASSO changes to polynomial from exponential. That means LASSO is more sensitive to the distribution of $\beta$ than other LQLS. 
\item The second order term of LASSO becomes smaller as $\ell$ decreases. Note that when $\ell$ decreases the mass of the distribution around zero increases. Hence, Theorem \ref{thm:densegeneralcase} implies that LASSO performs better when the probability mass of the signal concentrates more around zero. This can be well explained by the sparsity promoting feature of LASSO.

\item As in the case $\mathbb{P}(|B|>\mu)=1$, the first dominant term is the same for all $q\in [1,2]$. Hence we have to compare their second order term. For any given $q\in (1,2]$, suppose $\mathbb{P}(|B| \leq t)=\Theta(t^{2-q+\epsilon})$, then the second term of ${\rm AMSE}(\lambda_{*,q},q,\sigma_w)$ is of order $\sigma_w^4$, while that of LASSO is $\Theta (\sigma_w^{6-2q+2\epsilon})$ (ignore the logarithmic factor). Since both terms are negative, we can conclude LASSO performs better than LQLS with that value of $q$ when $\epsilon \in (0, q-1)$, and performs worse when $\epsilon \in (q-1,\infty)$. This observation has an important implication. The behavior of the distribution of $|B|$ around zero is the most important factor in the comparison between LASSO and other LQLS. If the pdf of $|B|$ is zero at zero, then we should not use LASSO and when it goes to infinity LASSO performs better than LQLS with $q>1$ (at least for those values of $q$ for which our theorem is applicable). 

\item Regrading the case where the probability density function of $|B|$ is finite and positive at zero, our calculations of LASSO are not sharp enough to give an accurate comparison between LASSO and other LQLS. However, the comparison of LQLS for different values of $q>1$ will shed more light on the performance of different regularizers in this case. In the following we consider two of the most popular families of distributions and present an accurate comparison among $q\in (1,2]$.
\end{enumerate}

\begin{lemma}\label{lemma:tail12}
Consider $|B|$ with density function $f(b)=\zeta(\tau,q_0)e^{-\tau b^{q_0}}\mathbbm{1}(0\leq b <\infty)$, where $q_0 \in (0,2], \tau>0$ and $\zeta(\tau,q_0)$ is the normalization constant. Then the best bridge estimator is the one that uses $q^*=\max(1,q_0)$. 
\end{lemma}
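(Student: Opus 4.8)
The plan is to reduce Lemma~\ref{lemma:tail12} to the optimization problem $q^* = \argmax_{1 < q \le 2} C_q$ identified right after Theorem~\ref{thm:densebbiggerthanmu}, together with the comparison facts established in Theorem~\ref{thm:densegeneralcase}. The first step is to determine, for the given density $f(b) = \zeta(\tau,q_0) e^{-\tau b^{q_0}}\mathbbm{1}(0 \le b < \infty)$, the behavior of $\mathbb{P}(|B| \le t)$ as $t \to 0$. Since $f$ is continuous and positive at $b=0$ with $f(0) = \zeta(\tau,q_0)$, we have $\mathbb{P}(|B| \le t) = \zeta(\tau,q_0)\, t + o(t)$, i.e. $\mathbb{P}(|B| \le t) = \Theta(t)$. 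This pins down which part of Theorem~\ref{thm:densegeneralcase} applies: for $q \in (1,2)$ the hypothesis $\mathbb{P}(|B| \le t) = O(t^{2-q+\epsilon})$ holds whenever $1 > 2 - q + \epsilon$ has no bearing — rather it holds for all $q$ with $2 - q < 1$, that is $q > 1$, taking $\epsilon$ small; so the $\sigma_w^4$-expansion with constant $C_q$ is valid for every $q \in (1,2]$, and the LASSO case corresponds to $\ell = 1$, giving a second-order term of order $\Theta(\sigma_w^4)$ up to iterated-log factors.

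The second step splits on the sign of $q_0 - 1$. If $q_0 \le 1$, I claim $q^* = 1$: by item~3 in the discussion after Theorem~\ref{thm:densegeneralcase}, when the pdf of $|B|$ is finite and positive at zero (so $\ell = 1$, equivalently $\epsilon = q - 1$ is the borderline), LASSO's second-order term $\Theta(\sigma_w^4)$ beats $C_q \sigma_w^4$ for $q \in (1,2]$ strictly once one accounts for the logarithmic enhancement $(\log\log\cdots\log(1/\sigma_w))^\ell$ that makes $|{\rm AMSE} - \sigma_w^2/(1-1/\delta)|$ asymptotically larger for LASSO than any $C_q\sigma_w^4$ — here I should be careful and instead argue via the lower bound in Theorem~\ref{thm:densegeneralcase}: LASSO's improvement is $\gtrsim |\Theta(\sigma_w^4)| \cdot (\log\log\cdots\log(1/\sigma_w))$, which dominates the fixed constant $C_q\sigma_w^4$. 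Hence $q^* = 1 = \max(1,q_0)$. If $q_0 > 1$, then LASSO is strictly suboptimal (its $\Theta(\sigma_w^4)$ term still only has iterated-log enhancement, but now I must compare it against the \emph{best} $C_q$, $q \in (1,2]$, and show that best beats it) — actually the cleaner route is: for $q_0 > 1$ we show $\argmax_{1 < q \le 2} C_q = q_0$ and that the resulting $C_{q_0}\sigma_w^4$ dominates LASSO's term; since LASSO's enhancement is only a slowly varying factor, it suffices that $C_{q_0} > 0$ and that no $q \in (1,2]$ with a strictly larger improvement exists, which follows once the maximization is solved.

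The heart of the lemma is therefore the third step: solving $q^* = \argmax_{1 < q \le 2} C_q$ for $C_q = (q-1)^2 (\mathbb{E}|B|^{q-2})^2 / \mathbb{E}|B|^{2q-2}$ with $|B|$ having the stretched-exponential density above, and showing the maximizer equals $q_0$ when $q_0 \in (1,2]$. The moments are explicit Gamma integrals: $\mathbb{E}|B|^{s} = \zeta(\tau,q_0)\int_0^\infty b^{s} e^{-\tau b^{q_0}}\,db = \zeta(\tau,q_0)\, \tau^{-(s+1)/q_0} q_0^{-1}\Gamma\!\big(\tfrac{s+1}{q_0}\big)$, valid for $s > -1$ (so $\mathbb{E}|B|^{q-2}$ is finite iff $q > 1$, consistent with the hypothesis). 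Substituting $s = q-2$ and $s = 2q-2$ and simplifying, $C_q$ becomes, up to $q$-independent constants, a product of a power of $\tau$, a power of $q-1$, and a ratio of Gamma functions evaluated at arguments linear in $q$. I would then take $\log C_q$, differentiate in $q$, and show the stationarity condition is solved at $q = q_0$ — I expect the digamma-function terms to telescope precisely there because the arguments $\tfrac{q-1}{q_0}$ and $\tfrac{2q-1}{q_0}$ collapse to clean values ($\tfrac{q_0-1}{q_0}$ and $1$, etc.) at $q = q_0$, and second-derivative / concavity considerations (or a direct monotonicity check of $\frac{d}{dq}\log C_q$ on $(1,2]$) confirm it is the unique maximizer; if the unconstrained maximizer would exceed $2$ one projects to the boundary $q^* = 2$, which is exactly the case $q_0 \ge 2$, matching $\max(1,q_0)$ capped at $2$. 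The main obstacle is this Gamma-ratio optimization: controlling the sign of a difference of digamma values together with the $2\log(q-1)$ term over the whole interval, rather than just verifying a stationary point, since $C_q$ need not be globally log-concave and one must rule out other local maxima and correctly handle the $q_0 \le 1$ degeneration where $\mathbb{E}|B|^{q-2}$ stays finite but the optimizer is pushed to the endpoint $q=1$ where Theorem~\ref{thm:densebbiggerthanmu}/\ref{thm:densegeneralcase}'s LASSO analysis takes over.
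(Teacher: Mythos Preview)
Your proposal conflates two separate issues and, as a result, both misreads what the lemma asserts and misses the paper's short argument.

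First, the lemma is purely the statement $q^* = \argmax_{1<q\le 2} C_q = \max(1,q_0)$; it does not attempt any LASSO-versus-bridge comparison via Theorem~\ref{thm:densegeneralcase}. Indeed, the discussion immediately preceding the lemma says explicitly that for densities with $f(0)\in(0,\infty)$ (hence $\ell=1$) the LASSO bounds are \emph{not} sharp enough to compare with $q>1$. Your step~2 therefore tries to prove something the paper does not claim. Moreover, the inequality you invoke is read in the wrong direction: in Theorem~\ref{thm:densegeneralcase} the iterated-log factor sits on the \emph{lower} bound of ${\rm AMSE}-\sigma_w^2/(1-1/\delta)$, i.e., on the \emph{upper} bound of the improvement, so LASSO's improvement is only guaranteed to be $\gtrsim |\Theta(\sigma_w^4)|$ with an unspecified constant. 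That cannot establish dominance over $C_q\sigma_w^4$.

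Second, for the optimization of $C_q$ itself, your Gamma/digamma route is workable in principle but unnecessarily heavy, and you yourself flag the global sign analysis as the obstacle. The paper avoids all of this with one integration by parts,
\[
\mathbb{E}|B|^{q-2}=\int_0^\infty \zeta(\tau,q_0)b^{q-2}e^{-\tau b^{q_0}}db
=\frac{\tau q_0}{q-1}\,\mathbb{E}|B|^{q+q_0-2},
\]
which cancels the troublesome $(q-1)^2$ and yields $C_q=\tau^2q_0^2\,(\mathbb{E}|B|^{q+q_0-2})^2/\mathbb{E}|B|^{2q-2}$. For $q_0\in[1,2]$, Cauchy--Schwarz applied to $\mathbb{E}(|B|^{q-1}|B|^{q_0-1})$ gives $C_q\le C_{q_0}$ immediately. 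For $q_0\in(0,1)$, the Chebyshev-type correlation inequality (using an independent copy $B'$) shows $\mathbb{E}|B|^{q+q_0-2}\le \mathbb{E}|B|^{q-1}\mathbb{E}|B|^{q_0-1}$, whence $C_q\le \tau^2q_0^2(\mathbb{E}|B|^{q_0-1})^2=C_1$. Here ``$q^*=1$'' means the supremum of $C_q$ over $(1,2]$ is attained only in the limit $q\to 1^+$; no appeal to the LASSO expansion is needed.
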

\begin{proof}
A simple integration by parts yields, for $q\in (1,2]$
\begin{eqnarray*}
\mathbb{E}|B|^{q-2}=\int_0^{\infty}\zeta(\tau,q_0)b^{q-2}e^{-\tau b^{q_0}}db=\frac{\tau q_0}{q-1}\int_0^{\infty}\zeta(\tau,q_0)b^{q+q_0-2}e^{-\tau b^{q_0}}db=\frac{\tau q_0 \mathbb{E}|B|^{q+q_0-2}}{q-1}.
\end{eqnarray*}
Hence $C_q=\tau^2q_0^2\frac{(\mathbb{E}|B|^{q+q_0-2})^2}{\mathbb{E}|B|^{2q-2}}$. We first consider $q_0\in [1,2]$, then 
\begin{eqnarray*}
C_q = \tau^2q^2_0 \frac{[\mathbb{E}(|B|^{q-1}\cdot |B|^{q_0-1})]^2}{\mathbb{E}|B|^{2q-2}}\overset{(a)}{\leq}\tau^2q^2_0 \mathbb{E}|B|^{2q_0-2}=C_{q_0},
\end{eqnarray*}
where $(a)$ is due to Cauchy-Schwarz inequality. So we obtain $q^*=q_0$. Regrading the case $q_0 \in (0,1)$, let $B'$ be an independent copy of $B$. Then for any $q\in [1,2]$
\begin{eqnarray*}
\mathbb{E}|B|^{q+q_0-2}-\mathbb{E}|B|^{q-1}\mathbb{E}|B|^{q_0-1}=\frac{1}{2}\mathbb{E}(|B|^{q-1}-|B'|^{q-1})(|B|^{q_0-1}-|B'|^{q_0-1}) \leq 0.
\end{eqnarray*}
As a result, we can derive
\begin{eqnarray*}
C_q \leq \tau^2q^2_0\frac{(\mathbb{E}|B|^{q+q_0-2})^2}{(\mathbb{E}|B|^{q-1})^2}\leq \tau^2q^2_0 \mathbb{E}(|B|^{q_0-1})^2=C_1.
\end{eqnarray*}
We can then conclude $q^*=1=\max(1,q_0)$. 
\end{proof}
\vspace{0.2cm}

Note that Lemma \ref{lemma:tail12} studies a family of distributions whose probability density function exists and is non-zero at zero, but exhbit very different tail behaviors. As confirmed by this lemma, in this case the tail behavior has an influence on the performance of LQLS. In particular, LQLS with $q=q_0 \in (1,2]$ is optimal for distributions with the exponential decay tail $e^{-\tau b^{q_0}}$. Since $\hat{\beta}(\lambda,q_0)$ can be considered as the maximum a posterior estimate (MAP), our result suggests that MAP offers the best performance in the low noise regime (among the bridge estimators). This is in general not true. See \cite{zheng2015does} for a counterexample in large noise cases. It is also interesting to observe that as the tail becomes heavier than that of Laplacian distribution, the optimal $q^*$ approaches 1. Again, this observation is consistent with the fact that ridge often penalizes large signal values more aggressively than the other estimators. Hence, if the tail of the distribution is light (like Gaussian distributions), then ridge offers the best performance, otherwise, other values of $q$ offer better results. In the next lemma, we further support this claim by considering a special family of distributions with light tails.

\begin{lemma}
Consider $|B|$ follows a uniform distribution with density function $f(b)=\frac{1}{\theta}\mathbbm{1}(0\leq b \leq \theta)$, where $\theta>0$ is the location parameter. Then $q^*=2$.
\end{lemma}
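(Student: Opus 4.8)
The plan is to compute $C_q$ explicitly for the uniform distribution and show it is maximized at $q=2$ over $q\in(1,2]$. First I would note that for $|B|\sim\mathrm{Unif}(0,\theta)$ we have $\mathbb{E}|B|^{s}=\theta^{s}/(s+1)$ for any $s>-1$; in particular $\mathbb{E}|B|^{q-2}=\theta^{q-2}/(q-1)$ (finite since $q>1$) and $\mathbb{E}|B|^{2q-2}=\theta^{2q-2}/(2q-1)$. Substituting into the definition
\[
C_q=\frac{(q-1)^2(\mathbb{E}|B|^{q-2})^2}{\mathbb{E}|B|^{2q-2}}
\]
all the $\theta$ powers cancel and the $(q-1)^2$ factor cancels the $1/(q-1)^2$ coming from $(\mathbb{E}|B|^{q-2})^2$, leaving simply $C_q=(2q-1)/\theta^{2}$.

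With this closed form the conclusion is immediate: $C_q=(2q-1)/\theta^2$ is strictly increasing in $q$ on $(1,2]$, so its maximum over $1<q\le 2$ is attained uniquely at $q=2$, giving $q^*=2$. I would also remark that $C_2=3/\theta^2$ matches the $q=2$ case of Theorem~\ref{thm:densegeneralcase} (where the second-order constant is $\delta^3/((\delta-1)^3\mathbb{E}|B|^2)$ with $\mathbb{E}|B|^2=\theta^2/3$), providing a consistency check. One should check the applicability hypotheses of Theorem~\ref{thm:densegeneralcase} for the uniform density: $\mathbb{P}(|B|\le t)=t/\theta=\Theta(t)$ near $0$, so for any $q\in(1,2)$ the condition $\mathbb{P}(|B|\le t)=O(t^{2-q+\epsilon})$ holds as long as $2-q+\epsilon\le 1$, i.e. for $\epsilon\le q-1$; for smaller $q$ one needs $\epsilon$ correspondingly small, but since $\epsilon>0$ is arbitrary the second-order formula applies for every $q\in(1,2)$, and the $q=2$ case needs no such condition. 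Hence $C_q$ is the relevant comparison quantity for all $q\in(1,2]$, and $q^*=2$ follows.

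There is essentially no obstacle here — the only thing to be careful about is the bookkeeping of the cancellations and making sure the moments are finite (which is why $q>1$ is needed for $\mathbb{E}|B|^{q-2}<\infty$, consistent with the standing assumption). If one wanted the statement to also cover whether $q=2$ strictly beats $q=1$, one would invoke the last part of Theorem~\ref{thm:densegeneralcase}: for the uniform law $\ell=1$, so the LASSO second-order term is of order $\sigma_w^{2\ell+2}=\sigma_w^4$ up to iterated-log factors, i.e. asymptotically negligible (or at best comparable up to logs) relative to the clean $\sigma_w^4$ term with constant $C_2$ for ridge; so ridge is at least as good, and the lemma's claim $q^*=2$ stands as the optimum among $q\in(1,2]$.
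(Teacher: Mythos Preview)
Your proof is correct and follows essentially the same approach as the paper: compute the two moments $\mathbb{E}|B|^{q-2}=\theta^{q-2}/(q-1)$ and $\mathbb{E}|B|^{2q-2}=\theta^{2q-2}/(2q-1)$, simplify $C_q$ to $(2q-1)/\theta^2$, and observe this is increasing in $q$. The additional remarks about checking the hypotheses of Theorem~\ref{thm:densegeneralcase} and comparing with $q=1$ are not needed for the lemma as stated (which only concerns $q^*=\argmax_{1<q\le 2}C_q$), but they are not incorrect.
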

\begin{proof}
It is clear that
\begin{eqnarray*}
\mathbb{E}|B|^{q-2}=\frac{1}{\theta}\int_0^{\theta}b^{q-2}db=\frac{\theta^{q-2}}{q-1}, \quad \mathbb{E}|B|^{2q-2}=\frac{1}{\theta}\int_0^{\theta}b^{2q-2}db=\frac{\theta^{2q-2}}{2q-1}.
\end{eqnarray*}
Hence, $q^*=\argmax_{1<q\leq 2}\frac{(q-1)^2(\mathbb{E}|B|^{q-2})^2}{\mathbb{E}|B|^{2q-2}}=\argmax_{1<q\leq 2}\frac{2q-1}{\theta^2}=2$.
\end{proof}

\subsection{Implications for classical asymptotics} \label{largesample:imp}

In this section, we would like to show that the results we have derived in the previous sections, has, perhaps surprisingly, some interesting connections to the classical asymptotic setting in which $n \rightarrow \infty$, while $p$ is fixed. Our analysis so far has been focused on the high-dimensional setting in which $n/p\rightarrow \delta \in (0,\infty)$. Furthermore, we assumed that the noise variance is small. In the classical asymptotics, it is assumed that the signal-to-noise ratio of each observation is fixed and $n/p \rightarrow \infty$. Note that having more measurements is at the intuitive level equivalent to less noise. Hence, we expect our low-noise sensitivity to have some implications for the classical asymptotics too. Our goal below is to formalize this connection and explain the implications of our low-noise analysis framework for the classical asymptotics.  

Towards that goal, we will consider the scenarios where the sample size $n$ is much larger than the dimension $p$. Analytically, we let $\delta$ go to infinity and calculate the expansions for AMSE in terms of large $\delta$ (similar to what we did in Section \ref{main:low} for low noise). In this section, we write ${\rm AMSE}(\lambda_{*,q},q, \delta)$ for ${\rm AMSE}(\lambda_{*,q},q, \sigma_w)$ to make it clear that the expansion is derived in terms of $\delta$. Before getting to the results, we should clarify an important issue. Recall the definition of a converging sequence in Definition \ref{def:convseq}. It is straightforward to confirm that the signal-to-noise ratio of each measurement  is ${\rm SNR}\propto \frac{\mathbb{E}|B|^2}{\delta \sigma_w^2}$. Hence if we take $\delta \rightarrow \infty$, SNR of each measurement will go to zero and this is inconsistent with the classical asymptotic setting where the SNR is in general assumed to be fixed. To fix this inconsistency, we will scale the noise term and consider a scaled linear model as follows,
\begin{equation}\label{scale:model}
y=X\beta+ \frac{w}{\sqrt{\delta}},
\end{equation}
where $\{X, \beta, w\}$ is the converging sequence specified in Definition \ref{def:convseq}. With the SNR remained a positive constant, this model is well aligned with the classical setting. Again for comparison purposes we start with the ordinary least squares estimate.

\begin{lemma}
Consider the model \eqref{scale:model} and OLS estimate $\hat{\beta}(0,q)$. Then as $\delta \rightarrow \infty$,
\[
{\rm AMSE}(0,q,\delta)=\frac{\sigma_w^2}{\delta}+\frac{\sigma^2_w}{\delta^2}+o(\delta^{-2}).
\]
\end{lemma}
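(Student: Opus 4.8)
The plan is to reduce the statement to Lemma \ref{ols:property} by viewing the scaled model \eqref{scale:model} as a converging sequence with a smaller noise level, and then to perform an elementary geometric-series expansion in $1/\delta$.

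First I would check that the scaled model \eqref{scale:model} is itself a converging sequence in the sense of Definition \ref{def:convseq}. The design $X(p)$ and the signal $\beta(p)$ are untouched, so the conditions $n/p \to \delta$, the weak convergence of the empirical distribution of $\beta(p)$ to $p_\beta$, and the convergence of $\frac1p\|\beta(p)\|_2^2$ all carry over verbatim. The only change is in the noise component: the rescaled vector $w(p)/\sqrt{\delta}$ (equivalently $w(p)/\sqrt{n/p}$, which is asymptotically the same) has empirical distribution converging weakly to a zero-mean law with variance $\sigma_w^2/\delta$, and $\frac1n\|w(p)/\sqrt{\delta}\|_2^2 \to \sigma_w^2/\delta$, since dividing by the fixed constant $\sqrt{\delta}$ preserves weak convergence and the convergence of the normalized $\ell_2$ norm. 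Hence the sole effect of the rescaling is to replace the noise variance $\sigma_w^2$ by $\sigma_w^2/\delta$.

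Next, since $\delta \to \infty$ we have $\delta > 1$ eventually, so Lemma \ref{ols:property} applies to the scaled model (with $\sigma_w^2$ replaced by $\sigma_w^2/\delta$) and gives
\[
{\rm AMSE}(0,q,\delta) = \frac{\sigma_w^2/\delta}{1 - 1/\delta} = \frac{\sigma_w^2}{\delta - 1}.
\]
Finally I would expand $\frac{1}{\delta-1} = \frac1\delta\sum_{k \geq 0}\delta^{-k}$, valid for $\delta > 1$, to obtain
\[
{\rm AMSE}(0,q,\delta) = \frac{\sigma_w^2}{\delta} + \frac{\sigma_w^2}{\delta^2} + O(\delta^{-3}) = \frac{\sigma_w^2}{\delta} + \frac{\sigma_w^2}{\delta^2} + o(\delta^{-2}),
\]
as claimed. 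There is essentially no real obstacle here; the only points deserving a moment's care are that rescaling the noise does not alter $\delta$ (which is the aspect ratio $n/p$, left untouched) and that the second-moment/$\ell_2$-norm conditions in Definition \ref{def:convseq} survive division by the constant $\sqrt\delta$, which they plainly do.
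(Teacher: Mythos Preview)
Your proof is correct and follows exactly the same approach as the paper: apply Lemma \ref{ols:property} to the scaled model (which amounts to replacing $\sigma_w^2$ by $\sigma_w^2/\delta$), obtain ${\rm AMSE}(0,q,\delta)=\sigma_w^2/(\delta-1)$, and then expand in powers of $1/\delta$. The paper states this in two lines without spelling out the converging-sequence verification, but your added detail is accurate and the argument is the same.
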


\begin{proof}
This lemma is a simple application of Lemma \ref{ols:property}. Under model \eqref{scale:model}, Lemma \ref{ols:property} shows that ${\rm AMSE}(0,q,\delta)=\frac{\sigma_w^2}{\delta-1}$. As $\delta \rightarrow \infty$, the expansion can be easily verified.  
\end{proof}
\vspace{0.2cm}

We now discuss the bridge estimators with $q \in[1,2]$.

\begin{theorem}\label{large:thm5}
Consider the model \eqref{scale:model}. Suppose that $\mathbb{P}(|B|>\mu)=1$ with $\mu$ being a positive constant and $\mathbb{E}|B|^2<\infty$. Then for $q\in [1,2]$, as $\delta \rightarrow \infty$,
\begin{eqnarray*}
{\rm AMSE}(\lambda_{*,q},q, \delta)=\frac{\sigma_w^2}{\delta}+\frac{\sigma_w^2}{\delta^2}\cdot \frac{\mathbb{E}|B|^{2q-2}-(q-1)^2(\mathbb{E}|B|^{q-2})^2\sigma_w^2}{\mathbb{E}|B|^{2q-2}} +o(\delta^{-2}). 
\end{eqnarray*}
\end{theorem}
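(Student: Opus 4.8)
The idea is to route the whole statement through the scalar fixed-point equation of Theorem~\ref{thm:eqpseudolip} and re-use the local expansion of its right-hand side that underlies Theorem~\ref{thm:densebbiggerthanmu}. Since $\{X,\beta,w/\sqrt{\delta}\}$ is again a converging sequence, now with noise variance $\sigma_w^2/\delta$ (while $n/p\to\delta$ is unchanged, so the $1/\delta$ in the state evolution is the same $\delta$), Theorem~\ref{thm:eqpseudolip} applies to the scaled model \eqref{scale:model} verbatim after the substitution $\sigma_w^2 \mapsto \sigma_w^2/\delta$. Writing $F_q(s) := \min_{\chi\ge 0}\mathbb{E}_{B,Z}\,(\eta_q(B+sZ;\chi)-B)^2$, this gives
\[
{\rm AMSE}(\lambda_{*,q},q,\delta) = F_q(\bar\sigma), \qquad \bar\sigma^2 = \frac{\sigma_w^2}{\delta} + \frac{1}{\delta}\,F_q(\bar\sigma).
\]
First I would record the crude bound $F_q(s)\le s^2$ (take $\chi=0$, since $\eta_q(u;0)=u$), which plugged into the fixed-point relation yields $\bar\sigma^2 \le \sigma_w^2/(\delta-1)$; in particular $\bar\sigma \to 0$ as $\delta\to\infty$, so we are precisely in the low-noise regime whose analysis is already available.

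Next I would invoke (or, equivalently, re-derive by the same proximal-operator Taylor argument used in Section~\ref{proof:thm3}) the expansion valid as $s\to 0$ under the hypothesis $\mathbb{P}(|B|>\mu)=1$: for $q\in(1,2]$,
\[
F_q(s) = s^2 - C_q\,s^4 + o(s^4), \qquad C_q := \frac{(q-1)^2\,(\mathbb{E}|B|^{q-2})^2}{\mathbb{E}|B|^{2q-2}},
\]
and for $q=1$ the same display holds with $C_1=0$ after absorbing a correction that is exponentially small in $1/s^2$. Feeding this into $\bar\sigma^2(\delta-1) = \sigma_w^2 - C_q\bar\sigma^4 + o(\bar\sigma^4)$ and bootstrapping — first $\bar\sigma^2 = \sigma_w^2/(\delta-1)+o(\delta^{-1})$, then reinserting — gives
\[
\bar\sigma^2 = \frac{\sigma_w^2}{\delta-1} - \frac{C_q\,\sigma_w^4}{(\delta-1)^3} + o(\delta^{-3}).
\]
Then ${\rm AMSE} = F_q(\bar\sigma) = \bar\sigma^2 - C_q\bar\sigma^4 + o(\bar\sigma^4)$; combining the two quartic contributions via $(\delta-1)^{-3}+(\delta-1)^{-2} = \delta\,(\delta-1)^{-3}$ yields ${\rm AMSE} = \sigma_w^2/(\delta-1) - C_q\delta\,\sigma_w^4/(\delta-1)^3 + o(\delta^{-3})$, and finally expanding $(\delta-1)^{-1} = \delta^{-1}+\delta^{-2}+O(\delta^{-3})$ and $\delta(\delta-1)^{-3} = \delta^{-2}+O(\delta^{-3})$ produces $\sigma_w^2/\delta + \delta^{-2}\sigma_w^2\big(\mathbb{E}|B|^{2q-2}-(q-1)^2(\mathbb{E}|B|^{q-2})^2\sigma_w^2\big)/\mathbb{E}|B|^{2q-2} + o(\delta^{-2})$, which is the claimed formula (the case $q=1$ following from $C_1=0$, with the exponentially small term swallowed by $o(\delta^{-2})$).

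The conceptual content is entirely contained in the expansion of $F_q$, which is inherited from the proof of Theorem~\ref{thm:densebbiggerthanmu}; the only genuinely new issue is the change of asymptotic variable from ``$\sigma_w\to 0$ at fixed $\delta$'' to ``$\delta\to\infty$''. Hence the point to be careful about is purely a bookkeeping one: once $\bar\sigma^2 = \Theta(\delta^{-1})$ is established, the remainder $o(\bar\sigma^4)$ is $o(\delta^{-2})$, but one must check that the implicit-function/bootstrap step solving the fixed-point equation is uniform for large $\delta$ so that this remainder is not degraded, and (for $q=1$) that a term exponentially small in $\delta$ is genuinely $o(\delta^{-2})$. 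Both checks are routine given $\bar\sigma\to 0$, so no substantial obstacle remains.
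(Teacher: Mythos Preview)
Your proposal is correct and follows essentially the same route as the paper: apply Theorem~\ref{thm:eqpseudolip} to the scaled model (so the noise variance becomes $\sigma_w^2/\delta$), use $F_q(s)\le s^2$ to force $\bar\sigma\to 0$, and then feed in the small-$\sigma$ expansion of $R_q(\chi_q^*(\sigma),\sigma)$ from Lemmas~\ref{lq12:amse} and~\ref{dense:l1}. The paper's write-up is slightly more compact---rather than bootstrapping $\bar\sigma^2$ to order $\delta^{-3}$ and then recombining, it writes $\delta^2\big({\rm AMSE}-\sigma_w^2/\delta\big)=(\delta^2\bar\sigma^4)\cdot\frac{R_q-1}{\bar\sigma^2}+(\delta\bar\sigma^2)R_q$ and takes the limit directly---but this is purely a difference in algebraic packaging, not in method. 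One small remark: your intermediate claim of an $o(\delta^{-3})$ remainder is an overstatement, since $o(\bar\sigma^4)=o(\delta^{-2})$ is all that follows from the $F_q$ expansion; fortunately only $o(\delta^{-2})$ is needed for the theorem, so this does not affect the conclusion.
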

The proof can be found in Section \ref{proof:large:thm5}. Since both Theorems \ref{thm:densebbiggerthanmu} and \ref{large:thm5} are concerned with signals that are bounded away from zero, we can compare their results. Again all of the LQLS have the same first dominant term. However, in the large sample regime, the second order term of LASSO is at the same order as that of other LQLS. Interestingly, the comparison of the constant in the second order term is consistent with that in the low noise case. Hence we obtain the same conclusions for two-point mixture distributions. For instance, bridge with $q \in (1,2]$ outperforms OLS and $q=2$ is optimal when all the mass is concentrated at one point. See Lemma \ref{lemma:twopoint} for more information on the comparison of $C_q$. 

 We now discuss the implications of Theorem \ref{large:thm5} for classical asymptotics. In the classical setting where $n\rightarrow \infty$ and $p$ is fixed, the performance of LQLS has been studied in \cite{knight2000asymptotics}. In particular the LQLS estimates were shown to have the regular $\sqrt{n}$ convergence. In our setting, we first let $n/p \rightarrow \delta$ and then $\delta \rightarrow \infty$. If we apply Theorem 2 in \cite{knight2000asymptotics} to \eqref{scale:model}, a straightforward calculation for the asymptotic variance will give us the first dominant term in ${\rm AMSE}(\lambda_{*,q},q, \delta)$. In other words, the classical asymptotic result for LQLS in \cite{knight2000asymptotics} only provides the ``first-order" information regarding mean square error, and it is the same for all the values of $q\in [1,2]$ under optimal tuning. The virtue of our asymptotic framework is to offer the second order term that can be used to evaluate and compare LQLS more accurately. Similar results can be derived when signals have mass around zero, as presented in the next theorem. 

\begin{theorem}\label{large:thm6}
Consider the model introduced in \eqref{scale:model} and assume $\mathbb{E}|B|^2<\infty$. For any given $q\in (1,2)$, suppose that $\mathbb{P}(|B|\leq t)=O(t^{2-q+\epsilon})$ (as $t \rightarrow 0$) with $\epsilon$ being any positive constant, then as $\delta \rightarrow \infty$,
\begin{eqnarray*}
{\rm AMSE}(\lambda_{*,q},q, \delta)=\frac{\sigma_w^2}{\delta}+\frac{\sigma_w^2}{\delta^2}\cdot \frac{\mathbb{E}|B|^{2q-2}-(q-1)^2(\mathbb{E}|B|^{q-2})^2\sigma_w^2}{\mathbb{E}|B|^{2q-2}}+o(\delta^{-2}),
\end{eqnarray*}
for $q=2$, as $\delta \rightarrow \infty$,
\begin{eqnarray*}
{\rm AMSE}(\lambda_{*,q},q, \delta)=\frac{\sigma_w^2}{\delta}+\frac{\sigma_w^2}{\delta^2}\cdot \frac{\mathbb{E}|B|^2-\sigma_w^2}{\mathbb{E}|B|^2}+o(\delta^{-2}),
\end{eqnarray*}
and for $q=1$, suppose $\mathbb{P}(|B|\leq t)=\Theta(t^{\ell})$ with $0<\ell <1$, then as $\delta\rightarrow \infty$,
\begin{eqnarray*}
-|\Theta (\delta^{-\ell-1})| \cdot \big(\underbrace{\log \log \ldots \log}_{m\  \rm times} (\sqrt{\delta}) \big)^{\ell}\lesssim  {\rm AMSE}(\lambda_{*,q},q, \delta)- \frac{\sigma_w^2}{\delta} \lesssim   -|\Theta (\delta^{-\ell-1})|,
\end{eqnarray*}
where $m$ can be any natural number.
\end{theorem}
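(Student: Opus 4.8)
The plan is to exploit the change-of-variables trick that connects the large-$\delta$ asymptotics of the scaled model \eqref{scale:model} to the low-noise asymptotics of Theorems \ref{thm:densebbiggerthanmu} and \ref{thm:densegeneralcase}, thereby avoiding a separate analysis of the fixed-point equation \eqref{eq:fixedpoint}. First I would rewrite the scaled model $y = X\beta + w/\sqrt\delta$ as a standard converging sequence with effective noise variance $\tilde\sigma_w^2 = \sigma_w^2/\delta$; then Theorem \ref{thm:eqpseudolip} applies verbatim with $\sigma_w$ replaced by $\tilde\sigma_w$, so that ${\rm AMSE}(\lambda_{*,q},q,\delta) = {\rm AMSE}(\lambda_{*,q},q,\tilde\sigma_w)$ for the original (unscaled-noise) problem. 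The subtlety is that $\delta$ appears in two places: in the noise level $\tilde\sigma_w^2 = \sigma_w^2/\delta$ and in the $1/\delta$ prefactor of the fixed-point equation. I would therefore revisit the fixed-point analysis underlying the proofs in Sections \ref{proof:thm3} and \ref{proof:thm4}, tracking the joint dependence on $(\tilde\sigma_w,\delta)$ rather than substituting a single formula, since the cancellations that produced the clean $\sigma_w^4$ coefficient in the low-noise regime must now be re-derived with $\delta\to\infty$.

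Concretely, for $q\in(1,2)$ and for $q=2$, I would posit an expansion $\bar\sigma^2 = a_1/\delta + a_2/\delta^2 + o(\delta^{-2})$, plug it into \eqref{eq:fixedpoint}, and match powers of $1/\delta$. The leading balance gives $a_1 = \sigma_w^2 \cdot \delta/(\delta-1)$-type behaviour, i.e. $\bar\sigma^2 \approx \sigma_w^2/(\delta-1)$ to first order, exactly as in Lemma \ref{ols:property}; the next order requires expanding $\min_\chi \mathbb{E}_{B,Z}(\eta_q(B+\bar\sigma Z;\chi)-B)^2$ for small $\bar\sigma$, which is precisely the computation already carried out in the proof of Theorem \ref{thm:densegeneralcase} and yields a term of order $\bar\sigma^4$ with coefficient $(q-1)^2(\mathbb{E}|B|^{q-2})^2/\mathbb{E}|B|^{2q-2}$ (and $1/\mathbb{E}|B|^2$ when $q=2$). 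Feeding $\bar\sigma^2 = \sigma_w^2/(\delta-1) + O(\delta^{-2})$ back into \eqref{eq:lassoobs} and expanding $1/(\delta-1) = \delta^{-1} + \delta^{-2} + o(\delta^{-2})$ should reproduce the stated formula, with the $-(q-1)^2(\mathbb{E}|B|^{q-2})^2\sigma_w^2 / \mathbb{E}|B|^{2q-2}$ factor arising from the quartic correction evaluated at $\bar\sigma^2\approx\sigma_w^2/\delta$. The conditions $\mathbb{E}|B|^2<\infty$ and $\mathbb{P}(|B|\le t)=O(t^{2-q+\epsilon})$ guarantee finiteness of the moments $\mathbb{E}|B|^{q-2}$ and $\mathbb{E}|B|^{2q-2}$ appearing in the coefficient, just as in Theorem \ref{thm:densegeneralcase}.

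For the LASSO case $q=1$, the same substitution $\sigma_w \mapsto \sigma_w/\sqrt\delta$ should be applied to the two-sided bound of Theorem \ref{thm:densegeneralcase}. Under $\mathbb{P}(|B|\le t)=\Theta(t^\ell)$, the low-noise bound reads $-|\Theta(\tilde\sigma_w^{2\ell+2})|\,(\underbrace{\log\cdots\log}_{m}(1/\tilde\sigma_w))^\ell \lesssim {\rm AMSE} - \tilde\sigma_w^2/(1-1/\delta) \lesssim -|\Theta(\tilde\sigma_w^{2\ell+2})|$; substituting $\tilde\sigma_w = \sigma_w/\sqrt\delta$ turns $\tilde\sigma_w^{2\ell+2}$ into $\Theta(\delta^{-\ell-1})$ and $1/\tilde\sigma_w$ into $\Theta(\sqrt\delta)$, giving the claimed display once one checks that the iterated-logarithm factor $(\log\cdots\log(\sqrt\delta))^\ell$ is unaffected (up to constants) by the $\sigma_w$ factor inside. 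The restriction $0<\ell<1$ is what ensures $\delta^{-\ell-1}$ dominates $\delta^{-2}$, so the second-order term is genuinely captured by this bound rather than being swamped by the $\delta^{-2}$ remainder in the OLS-type leading expansion; I would make this ordering explicit.

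The main obstacle I anticipate is not the algebra of matching powers but justifying that the remainder terms are uniformly controlled as $\delta\to\infty$: one must show that the $o(\tilde\sigma_w^4)$ (respectively $o(\tilde\sigma_w^{2\ell+2}\cdot{\rm polylog})$) error estimates from Theorems \ref{thm:densebbiggerthanmu}--\ref{thm:densegeneralcase} remain valid when $\tilde\sigma_w$ is tied to $\delta$ through $\tilde\sigma_w^2=\sigma_w^2/\delta$, and that the $1/\delta$ prefactor in the fixed-point equation does not interfere with the small-$\bar\sigma$ expansion of the $\min_\chi$ objective. This is really a question of whether the estimates in Sections \ref{proof:thm3} and \ref{proof:thm4} were proved with enough uniformity; if they were stated only for fixed $\delta$, I would need to re-examine the chaining argument to confirm its constants depend on $\delta$ only through the benign factor $1/(1-1/\delta)$, which stays bounded as $\delta\to\infty$. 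Once that uniformity is in hand, the theorem follows by combining the expansion of the fixed point with the expansion $1/(\delta-1)=\delta^{-1}+\delta^{-2}+o(\delta^{-2})$.
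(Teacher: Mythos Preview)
Your approach is essentially correct and matches the paper's strategy, but you are making it harder than necessary by framing the argument as a substitution into Theorems \ref{thm:densebbiggerthanmu}--\ref{thm:densegeneralcase} and then worrying about uniformity in $\delta$. The paper sidesteps this entirely: it never invokes those theorems, only the underlying risk expansions (Lemmas \ref{lq12:amse}, \ref{dense:l1}, \ref{lq:general:final:rate:lemma}, \ref{l1:amse}), which are statements about $R_q(\chi_q^*(\sigma),\sigma)$ as $\sigma\to 0$ and contain no $\delta$ whatsoever. The fixed-point identity ${\rm AMSE}=\bar\sigma^2 R_q(\chi_q^*(\bar\sigma),\bar\sigma)=\delta\bar\sigma^2-\sigma_w^2$ immediately gives $\bar\sigma^2\le \sigma_w^2/(\delta-1)\to 0$, so $\bar\sigma\to 0$ as $\delta\to\infty$; from that point on the $R_q$ lemmas apply verbatim with $\sigma=\bar\sigma$, and the uniformity concern you raise simply does not arise.

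Concretely, the paper writes, for $q\in(1,2]$,
\[
\delta^2\Big({\rm AMSE}-\frac{\sigma_w^2}{\delta}\Big)=(\bar\sigma^4\delta^2)\cdot\frac{R_q(\chi_q^*(\bar\sigma),\bar\sigma)-1}{\bar\sigma^2}+(\delta\bar\sigma^2)\cdot R_q(\chi_q^*(\bar\sigma),\bar\sigma),
\]
and then reads off the limit of each factor from $\bar\sigma^2=\sigma_w^2/\delta+o(1/\delta)$ together with Lemma \ref{lq:general:final:rate:lemma}. For $q=1$ it does the analogous decomposition with $\delta^{\ell+1}$ in place of $\delta^2$ and uses Lemma \ref{l1:amse} for the behaviour of $(R_1-1)/\bar\sigma^{2\ell}$; the restriction $\ell<1$ is exactly what makes the second piece $(\delta^\ell\bar\sigma^2)R_1=o(1)$, as you correctly anticipated. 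Your plan to substitute $\tilde\sigma_w=\sigma_w/\sqrt\delta$ directly into the packaged bound of Theorem \ref{thm:densegeneralcase} is less clean because that bound already has $\delta$ baked into its leading term $\tilde\sigma_w^2/(1-1/\delta)$, whereas working at the level of $R_q$ keeps the two roles of $\delta$ separated and makes the algebra a two-line computation.
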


The proof is presented in Section \ref{proof:large:thm5}. Theorem \ref{large:thm6} can be compared with Theorem \ref{thm:densegeneralcase}. Again we see that the expansion for $q\in (1,2]$ remains the same for more general signals, while the second order term of LASSO becomes order-wise smaller when signals put more mass around zero. For a given $q\in (1,2]$, it is clear that LASSO outperforms this LQLS when $\mathbb{P}(|B|\leq t)=\Theta(t^{2-q+\epsilon})$ with $\epsilon \in (0,q-1)$. This implies that even in the case when $n$ is much larger than $p$, if the underlying signal has many elements of small values, $\ell_1$ regularization will improve the performance, which is characterized by a second order analysis that is not available from the $\sqrt{n}$ convergence result. Regrading the distributions with tail $e^{-\tau b^{q_0}}$, we see that the comparison among $q\in (1,2]$ in the low noise regime carries over. 

The fact that regularization can improve the performance of the maximum likelihood estimate (i.e., OLS in the context of linear regression with Gaussian noise), seems to be contradictory with the classical results that imply MLE is asymptotically optimal under mild regularity conditions. However, note that the optimality of MLE is concerned with the asymptotic variance (equivalently the first order term) of the estimate. Our results show that many estimators share that first order term,  while their actual performance might be different. Second dominant terms provide much more accurate information in these cases.

\section{Proofs of our main results}\label{sec:proof}

\subsection{Notations and Preliminaries} \label{begin:proof}
Throughout the proofs,  $B$ will be a random variable having the probability measure $p_{\beta}$ that appears in the definition of the converging sequence, and $Z$ will refer to a standard normal random variable. We will also use $\phi(\cdot)$ to denote the density function of $Z$ and $F(b)$ to represent the cumulative distribution function of $|B|$. We further define the following useful notations:
\begin{eqnarray}\label{def:risknotation}
R_q(\chi,\sigma)=\mathbb{E}(\eta_q(B/\sigma+Z;\chi)-B/\sigma)^2, \quad \chi^*_q(\sigma)=\arg\min_{\chi\geq 0}R_q(\chi,\sigma),
\end{eqnarray}
where $B$ and $Z$ are independent. Recall the proximal operator function $\eta_q(u;\chi)$. Since we will be using $\eta_q(u;\chi)$ extensively in the later proofs, we present some useful properties of $\eta_q(u;\chi)$ in the next lemma.  Because $\eta_q(u;\chi)$ has explicit forms when $q=1,2$, we focus on the case $1<q<2$. For notational simplicity we may use $\partial_i f(x_1,x_2,\ldots)$ to represent the partial derivative of $f$ with respect to its $i$th argument. 

\begin{lemma} \label{prox:prop}
For $q\in (1,2)$, the function $\eta_q(u;\chi)$ satisfies the following properties.
\begin{itemize}
\item[(i)] $-\eta_q(u;\chi)=\eta_q(-u;\chi)$.
\vspace{-0.2cm}
\item[(ii)] $u=\eta_q(u;\chi)+\chi q(q-1) \eta_q(u;\chi){\rm sign}(u)$.
\vspace{-0.2cm}
\item[(iii)] $\alpha \eta_q(u;\chi)=\eta_q(\alpha u;\alpha^{2-q}\chi)$, \mbox{~for~}$\alpha>0$.
\vspace{-0.2cm}
\item[(iv)] $\frac{\partial \eta_q(u;\chi)}{\partial u}=\frac{1}{1+\chi q(q-1)|\eta_q(u;\chi)|^{q-2}}$
\vspace{-0.2cm}
\item[(v)] $\frac{\partial \eta_q(u;\chi)}{\partial \chi}=\frac{-q|\eta_q(u;\chi)|^{q-1}{\rm sign}(u)}{1+\chi q(q-1)|\eta_q(u;\chi)|^{q-2}}$
\vspace{-0.2cm}
\item[(vi)]  The function $\partial_2 \eta_q(u;\chi)$ is differentiable with respect to $u$. 
\end{itemize}
\end{lemma}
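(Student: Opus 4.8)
The plan is to derive everything from the single defining identity $\eta_q(u;\chi)=\arg\min_z \tfrac12(u-z)^2+\chi|z|^q$, using the fact that for $q\in(1,2)$ the objective is strictly convex and differentiable in $z$, so the minimizer is characterized by the stationarity equation $z-u+\chi q|z|^{q-1}\mathrm{sign}(z)=0$. First I would record this first-order condition and note that it has a unique solution, which is exactly $z=\eta_q(u;\chi)$; this immediately gives property (ii) after rearranging (writing $|z|^{q-1}\mathrm{sign}(z)=|\eta_q|^{q-2}\eta_q$ one sees the stated form, modulo the cosmetic rewriting in the paper). Property (i) follows by the substitution $z\mapsto -z$, $u\mapsto -u$ in the optimization problem, since $|z|^q$ is even. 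Property (iii) follows by the change of variable $z=\alpha z'$ in the minimization defining $\eta_q(\alpha u;\alpha^{2-q}\chi)$ and factoring out $\alpha^2$; equivalently one can just plug $\alpha\eta_q(u;\chi)$ into the stationarity equation for parameters $(\alpha u;\alpha^{2-q}\chi)$ and verify it solves it, then invoke uniqueness.

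Next, for the derivative formulas (iv) and (v), I would apply the implicit function theorem to the stationarity equation $G(u,\chi,z):=z-u+\chi q|z|^{q-1}\mathrm{sign}(z)=0$. The key point is that $\partial_z G = 1+\chi q(q-1)|z|^{q-2}>0$ for $q\in(1,2)$ and $z\ne 0$ (the function $|z|^{q-1}\mathrm{sign}(z)$ is $C^1$ away from $0$ with derivative $(q-1)|z|^{q-2}$), so $G$ is locally invertible in $z$ and $\eta_q$ is differentiable wherever $\eta_q(u;\chi)\ne 0$. Then (iv) is $\partial_u\eta_q = -\partial_u G/\partial_z G = 1/(1+\chi q(q-1)|\eta_q|^{q-2})$ and (v) is $\partial_\chi\eta_q = -\partial_\chi G/\partial_z G = -q|\eta_q|^{q-1}\mathrm{sign}(u)/(1+\chi q(q-1)|\eta_q|^{q-2})$, using $\mathrm{sign}(\eta_q(u;\chi))=\mathrm{sign}(u)$ which is clear from (i) and monotonicity. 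I would also note that at $u=0$ one has $\eta_q(0;\chi)=0$ and the formulas extend continuously (the $|\eta_q|^{q-2}$ term blows up, making $\partial_u\eta_q\to 0$), so there is no real issue there; for (vi) the relevant region is again $u\ne 0$.

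For property (vi) — that $\partial_2\eta_q(u;\chi)$ (i.e.\ $\partial_\chi\eta_q$) is differentiable in $u$ — I would simply differentiate the closed form from (v) with respect to $u$, using (iv) to express $\partial_u\eta_q$, and check the result is well-defined: away from $u=0$, $\eta_q$ is smooth and nonzero, $|\eta_q|^{q-1}$ and $|\eta_q|^{q-2}$ are differentiable functions of $\eta_q$, hence of $u$, so the quotient in (v) is differentiable in $u$; near $u=0$ one checks the one-sided limits match (both are $0$ by the sign factor) and estimates the difference quotient using the bound $|\eta_q(u;\chi)|\le|u|$ together with the asymptotics of the stationarity equation, which forces $|\eta_q(u;\chi)|\sim (|u|/(\chi q))^{1/(q-1)}$ as $u\to0$ — small enough that the potentially singular factors are controlled. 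The main obstacle is precisely this behavior at $u=0$: away from the origin everything is a routine implicit-function-theorem computation, but the non-smoothness of $|z|^q$ at $z=0$ means one must argue separately (via the scaling $\eta_q(u;\chi)\asymp u^{1/(q-1)}$) that $\eta_q$, $\partial_\chi\eta_q$ and the $u$-derivative of $\partial_\chi\eta_q$ all exist and are continuous through $u=0$. Since properties (iv)--(vi) are only invoked later for integrands under expectations where the origin is a measure-theoretically negligible or integrable singularity, I would state the derivative formulas for $u\ne0$ and handle $u=0$ by continuity, which suffices for all subsequent uses.
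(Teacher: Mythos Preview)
Your proposal is correct and, in fact, more informative than what the paper does: the paper's own ``proof'' of this lemma consists entirely of a citation (``Please refer to Lemmas 7, 8 and 10 in \cite{weng2016overcoming} for the proofs''), with no argument given. Your approach --- deriving everything from the first-order optimality condition $z-u+\chi q|z|^{q-1}\mathrm{sign}(z)=0$ via strict convexity and the implicit function theorem --- is exactly the standard route and is what the cited reference itself does, so there is no substantive difference in method, only in level of self-containment. One minor remark: the statement of (ii) in the paper appears to contain a typo (it should read $u=\eta_q(u;\chi)+\chi q|\eta_q(u;\chi)|^{q-1}\mathrm{sign}(u)$, as the paper itself uses later in the proof of Lemma~\ref{epsilon1:tune}); your stationarity equation gives the correct form, and your parenthetical ``modulo the cosmetic rewriting'' is the right attitude here.
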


\begin{proof}
Please refer to Lemmas 7, 8 and 10 in  \cite{weng2016overcoming} for the proofs. 
\end{proof}

\vspace{0.2cm}
We next write down the Stein's lemma \citep{stein1981estimation} that we will apply several times in the proofs.  \\

\noindent  \textbf{Stein's lemma.} \emph{Suppose the function $f: \mathbb{R}\rightarrow \mathbb{R}$ is weakly differentiable and $\mathbb{E}|f'(Z)|<\infty$}, then
\[
\mathbb{E}(Zf(Z))=\mathbb{E}f'(Z).
\]

\subsection{Proof of Lemma \ref{ols:property}} \label{ols:proof}

Since $\delta>1$, $\hat{\beta}(0,q)=(X'X)^{-1}Xy$ is well defined with probability 1 for sufficiently large $n$. We first derive ${\rm AMSE}(\lambda, 2,\sigma_w)$ for the Ridge estimate $\hat{\beta}(\lambda,2)=(X'X+\lambda I)^{-1}X'y$, and then obtain the AMSE for OLS by letting $\lambda \rightarrow 0$. According to Theorem 2.1 in \cite{weng2016overcoming}, it is known that for given $\lambda >0$,
\begin{eqnarray*}
{\rm AMSE}(\lambda,2,\sigma_w)=\delta(\sigma^2-\sigma_w^2), 
\end{eqnarray*}
where $\sigma$ is the solution to the following equation:
\begin{eqnarray*}
\sigma^2=\sigma_w^2+\frac{4\chi^2\mathbb{E}|B|^2+\sigma^2}{\delta(1+2\chi)^2}, \quad \lambda=\chi-\frac{\chi}{\delta(1+2\chi)}.
\end{eqnarray*}
After a few calculations we can obtain
\begin{eqnarray}\label{amse:ridge}
{\rm AMSE}(\lambda,2,\sigma_w)=\frac{\delta(4\chi^2\mathbb{E}|B|^2+\sigma^2_w)}{\delta(1+2\chi)^2-1},
\end{eqnarray}
with $\chi=\frac{1-\delta+2\lambda \delta+\sqrt{(\delta-1-2\lambda\delta)^2+8\lambda \delta^2}}{4\delta}$. Clearly ${\rm AMSE}(\lambda,2,\sigma_w)\rightarrow \frac{\sigma_w^2}{1-1/\delta}$ as $\lambda \rightarrow 0$. We now utilize that result to derive AMSE for OLS. According to the identity below
\begin{eqnarray*}
(X'X+\lambda I)^{-1}=(X'X)^{-1}-\lambda \underbrace{(X'X)^{-1}(I+\lambda (X'X)^{-1})^{-1}(X'X)^{-1}}_{H},
\end{eqnarray*}
we have
\begin{eqnarray}\label{amse:ols}
\frac{1}{p}\|\hat{\beta}(0,q)-\beta\|_2^2-\frac{\sigma_w^2}{1-1/\delta}=\underbrace{\frac{1}{p}\|\hat{\beta}(\lambda,2)-\beta\|_2^2-\frac{\sigma_w^2}{1-1/\delta}}_{J_1}+\underbrace{\frac{1}{p}\|\lambda HX'y\|_2^2}_{J_2}+ \underbrace{\frac{2}{p}\langle \hat{\beta}(\lambda,2)-\beta, \lambda H X'y \rangle}_{J_3}
\end{eqnarray}
Let $\sigma_{\min}(X)$ be the smallest non-zero singular values of $X$. It is not hard to confirm that
\begin{eqnarray*}
\|HX'Y\|_2 \leq \|HX'X\beta\|_2+ \| HX'w\|_2\leq \frac{\|\beta\|_2}{\lambda+\sigma^2_{\min}(X)}+\frac{\|w\|_2}{(\lambda+\sigma^2_{\min}(X))\sigma^2_{\min}(X)}.
\end{eqnarray*}
Since $\sigma_{\min} \overset{a.s.}{\rightarrow} 1-\frac{1}{\sqrt{\delta}}>0$ \cite{bai1993limit} and $\beta, w$ belong to the converging sequence defined in \ref{def:convseq}, we can conclude that $J_2=O(\lambda^2), a.s.$. Moreover, we obtain from \eqref{amse:ridge} that almost surely 
\[
J_1=\frac{\delta(4\chi^2\mathbb{E}|B|^2+\sigma^2_w)}{\delta(1+2\chi)^2-1}-\frac{\sigma_w^2}{1-1/\delta}
\]
The results on $J_1, J_2$ imply that $J_3=O(\lambda), a.s.$. Further note that the term on the left hand side of \eqref{amse:ols} does not depend on $\lambda$. Therefore by letting $n\rightarrow \infty$ and then $\lambda \rightarrow 0$ on both sides of \eqref{amse:ols} finishes the proof.

\subsection{Proof of Theorem \ref{thm:densebbiggerthanmu}} \label{proof:thm3}

\subsubsection{Roadmap} \label{sec:roadmap}

Since the proof has several long steps, we lay out the roadmap to help readers navigate through the details. According to Lemma \ref{prox:prop} part (iii) and Theorem \ref{thm:eqpseudolip}, we know
\begin{eqnarray}\label{amse:formula}
{\rm AMSE}(\lambda_{*,q},q,\sigma_w)=\bar{\sigma}^2R_q(\chi^*_q(\bar{\sigma}),\bar{\sigma}),
\end{eqnarray}
where $\bar{\sigma}$ is the unique solution of 
\begin{eqnarray}\label{fixeq:first}
\bar{\sigma}^2=\sigma_w^2+\frac{\bar{\sigma}^2}{\delta}R_q(\chi^*_q(\bar{\sigma}),\bar{\sigma}).
\end{eqnarray}
Note from the above equation that $\bar{\sigma}$ is a function of $\sigma_w$. In the regime $\sigma_w \rightarrow 0$, we will show $\bar{\sigma}\rightarrow 0$. This fact combined with \eqref{amse:formula} tells us that in order to derive the second-order expansion of ${\rm AMSE}(\lambda_{*,q},q,\sigma_w)$ as a function of $\sigma_w$, it is sufficient to characterize the convergence rate of $\bar{\sigma}$ as $\sigma_w\rightarrow 0$ and $R_q(\chi^*_q(\sigma),\sigma)$ as $\sigma \rightarrow 0$. For that purpose, we will first study the convergence rate of $\chi^*_q(\sigma)$ as $\sigma \rightarrow 0$, which will then enables us to obtain the convergence rate of $R_q(\chi^*_q(\sigma),\sigma)$. We then utilize that result and \eqref{fixeq:first} to derive the rate of $\bar{\sigma}$ as $\sigma_w\rightarrow 0$. We give the proof for $1<q\leq 2$ and $q=1$ in Sections \ref{lqls:away:q12} and \ref{lqls:away:q1}, respectively. 

\subsubsection{Proof for the case $1<q \leq 2$} \label{lqls:away:q12}

Due to the explicit form of $\eta_2(u;\chi)=\frac{u}{1+2\chi}$, all the results for $q=2$ in this section can be easily verified. We thus focus the proof on $1<q<2$. 

\begin{lemma}
Let $\chi_q^*(\sigma)$ be the optimal threshold value as defined in \eqref{def:risknotation}. Then $\chi_q^*(\sigma)\rightarrow 0$ as $\sigma \rightarrow 0$.
\end{lemma}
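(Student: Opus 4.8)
The plan is to argue by contradiction, leaning on the elementary identity $R_q(0,\sigma)=1$, valid for every $\sigma>0$: since $\eta_q(u;0)=u$, we get $R_q(0,\sigma)=\mathbb{E}(B/\sigma+Z-B/\sigma)^2=\mathbb{E}Z^2=1$, so the minimizer always satisfies $R_q(\chi^*_q(\sigma),\sigma)\le R_q(0,\sigma)=1$. Hence it is enough to prove that for every fixed $\chi_0>0$ one has $\inf_{\chi\ge\chi_0}R_q(\chi,\sigma)\to\infty$ as $\sigma\to0$; this forces $\chi^*_q(\sigma)\in[0,\chi_0)$ for all small $\sigma$, and since $\chi_0$ is arbitrary, $\chi^*_q(\sigma)\to0$.

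To get the divergence I would use that, under the assumption $\mathbb{P}(|B|>\mu)=1$ of Theorem~\ref{thm:densebbiggerthanmu}, the input $U:=B/\sigma+Z$ of the proximal operator satisfies $|U|\to\infty$, and the proximal map cannot track it when $\chi$ stays away from $0$. Writing $W=|\eta_q(U;\chi)|$, Lemma~\ref{prox:prop}(ii) gives $|U-\eta_q(U;\chi)|=\chi q\,W^{q-1}$ and $|U|\le W+\chi q\,W^{q-1}$, so the triangle inequality yields
\[
\big|\eta_q(U;\chi)-B/\sigma\big|=\big|(U-\eta_q(U;\chi))-Z\big|\ \ge\ \chi q\,W^{q-1}-|Z| .
\]
On the event $\{|Z|\le1\}$ (a fixed positive probability) we have $|U|\ge|B|/\sigma-1\ge\mu/\sigma-1$, and I would bound the residual $\chi q\,W^{q-1}$ below uniformly over $\chi\ge\chi_0$ by splitting on the size of $W$: if $W\le|U|/2$ then $\chi q\,W^{q-1}\ge|U|-W\ge|U|/2$, whereas if $W>|U|/2$ then $\chi q\,W^{q-1}\ge\chi_0 q\,(|U|/2)^{q-1}$. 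Thus on $\{|Z|\le1\}$,
\[
\chi q\,W^{q-1}\ \ge\ \gamma(\sigma):=\min\Big\{\tfrac{\mu/\sigma-1}{2},\ \chi_0 q\big(\tfrac{\mu/\sigma-1}{2}\big)^{q-1}\Big\},
\]
and $\gamma(\sigma)\to\infty$ as $\sigma\to0$ since $q>1$ — the only place where $q>1$ and the separation of $|B|$ from $0$ enter.

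Combining the last three displays, for every $\chi\ge\chi_0$ and every $\sigma$ small enough that $\gamma(\sigma)>1$,
\[
R_q(\chi,\sigma)\ \ge\ \mathbb{E}\Big[\big(\eta_q(U;\chi)-B/\sigma\big)^2\mathbbm{1}(|Z|\le1)\Big]\ \ge\ \big(\gamma(\sigma)-1\big)^2\,\mathbb{P}(|Z|\le1)\ \longrightarrow\ \infty ,
\]
uniformly in $\chi\ge\chi_0$, which is incompatible with $R_q(\chi^*_q(\sigma),\sigma)\le1$ once $\sigma$ is small. This closes the contradiction. The case $q=2$ I would dispatch separately from the explicit minimizer $\chi^*_2(\sigma)=\sigma^2/(2\mathbb{E}B^2)$, obtained by minimizing $R_2(\chi,\sigma)=(1+4\chi^2\mathbb{E}B^2/\sigma^2)/(1+2\chi)^2$.

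The hard part will be the uniformity over all of $\chi\ge\chi_0$: if a contradiction sequence let $\chi\to\infty$, the proximal map would shrink its argument so strongly that $W$ need not grow, and the naive comparison of the residual $\chi q\,W^{q-1}$ with $|U|$ would fail. The two-case split above is tailored to this dichotomy — strong shrinkage forces $\chi q\,W^{q-1}=|U|-W\gtrsim|U|$, mild shrinkage forces $\chi q\,W^{q-1}\gtrsim\chi_0(|U|/2)^{q-1}$ — so that the lower bound $\gamma(\sigma)$ is free of $\chi$ and still diverges. Apart from this, the argument only needs the defining identity of $\eta_q$ from Lemma~\ref{prox:prop}(ii) and the triangle inequality; no expansion of $\eta_q$ is required here, the sharper rate of $\chi^*_q(\sigma)\to0$ being left to the later lemmas.
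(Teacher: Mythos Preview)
Your argument is correct. The paper itself does not give a proof of this lemma; it simply cites Lemma~17 of \cite{weng2016overcoming}, so there is nothing to compare against line by line. Your route is a clean, self-contained contradiction: using $R_q(0,\sigma)=1$ to trap the minimum, and then showing $\inf_{\chi\ge\chi_0}R_q(\chi,\sigma)\to\infty$ by restricting to the fixed-probability event $\{|Z|\le1\}$ and exploiting the identity $|U|=W+\chi q\,W^{q-1}$ from Lemma~\ref{prox:prop}(ii). The two-case split on $W\lessgtr|U|/2$ is exactly what is needed to get a lower bound that is uniform in $\chi\ge\chi_0$, including the case $\chi\to\infty$, and this is the only subtle point. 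One small remark: you write $|U|\le W+\chi q\,W^{q-1}$, but in fact equality holds (since $\eta_q(u;\chi)$ and $u$ share the same sign), which is what you actually use in the first case of the split. Your separate treatment of $q=2$ via the explicit minimizer is also fine and matches the paper's convention of handling $q=2$ by direct computation.
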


\begin{proof}
The proof is essentially the same as the one for Lemma 17 in \cite{weng2016overcoming}. Hence we do not repeat the arguments here. 
\end{proof}

\begin{lemma}\label{epsilon1:tune}
For $q\in (1,2]$, suppose that $\mathbb{P}(|B|>\mu)=1$ with $\mu$ being a positive constant and $\mathbb{E}|B|^2<\infty$. Then as $\sigma \rightarrow 0$
\begin{eqnarray*}
R_q(C\sigma^q,\sigma)=1+(C^2q^2\mathbb{E}|B|^{2q-2}-2Cq(q-1)\mathbb{E}|B|^{q-2})\sigma^2+o(\sigma^2),
\end{eqnarray*}
where $C$ is any fixed positive constant.
\end{lemma}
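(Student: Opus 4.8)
The plan is to derive the expansion of $R_q(C\sigma^q, \sigma)$ by Taylor-expanding the integrand $\mathbb{E}(\eta_q(B/\sigma + Z; C\sigma^q) - B/\sigma)^2$ around the "noiseless" limit, using the defining relation in Lemma \ref{prox:prop}(ii) together with the scaling identity (iii). First I would apply part (iii) with $\alpha = \sigma$: since $\sigma \eta_q(B/\sigma + Z; C\sigma^q) = \eta_q(B + \sigma Z; \sigma^2 \cdot C\sigma^q /\sigma^{?})$... more carefully, I would instead work directly with the representation $\eta_q(B/\sigma + Z; C\sigma^q) - B/\sigma$ and expand in the small parameter $\sigma$. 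The key observation is that when $|B| > \mu > 0$ almost surely, the argument $u = B/\sigma + Z$ is large (of order $1/\sigma$) with overwhelming probability, so $\eta_q(u;\chi) \approx u - \chi q\, |u|^{q-1}\mathrm{sign}(u) + \ldots$ for large $u$; this asymptotic inversion of the implicit equation $u = z + \chi q(q-1)|z|^{q-2} z$ (rewritten from (ii)) is what produces the polynomial-in-$\sigma$ corrections.

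The concrete steps would be: (1) Using Lemma \ref{prox:prop}(ii), write $\eta_q(u;\chi) = u - \chi q\, |\eta_q(u;\chi)|^{q-1}\mathrm{sign}(u)$, and iterate once to get $\eta_q(u;\chi) = u - \chi q\, |u|^{q-1}\mathrm{sign}(u) + O(\chi^2 |u|^{2q-3})$ for $|u| \to \infty$. (2) Substitute $u = B/\sigma + Z$ and $\chi = C\sigma^q$, so that $\eta_q(B/\sigma+Z; C\sigma^q) - (B/\sigma + Z) = -C\sigma^q q\, |B/\sigma + Z|^{q-1}\mathrm{sign}(B/\sigma+Z) + \ldots$; multiplying the leading $\sigma^q |B/\sigma|^{q-1} = \sigma\, |B|^{q-1}$ shows the first-order deviation is $O(\sigma)$, consistent with the claimed $\sigma^2$ correction to $R_q$. (3) Then compute $(\eta_q(B/\sigma+Z;C\sigma^q) - B/\sigma)^2 = (Z - C\sigma^{q} q|B/\sigma+Z|^{q-1}\mathrm{sign}(\cdot) + \ldots)^2$, expand the square, take $\mathbb{E}_{B,Z}$, and track terms up to $\sigma^2$. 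The $Z^2$ term gives the leading $1$; the cross term $-2 C q\, \mathbb{E}[Z \sigma^q |B/\sigma + Z|^{q-1}\mathrm{sign}(B/\sigma+Z)]$ should, after expanding $|B/\sigma+Z|^{q-1} \approx |B/\sigma|^{q-1}(1 + (q-1)Z\sigma/|B| + \ldots)$ and applying Stein's lemma (or directly $\mathbb{E}[Z g(Z)]$), produce $-2Cq(q-1)\mathbb{E}|B|^{q-2}\sigma^2$; and the square term $C^2 q^2 \sigma^{2q}\mathbb{E}|B/\sigma + Z|^{2q-2} \approx C^2 q^2 \mathbb{E}|B|^{2q-2}\sigma^2$ gives the remaining piece.

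The main obstacle will be step (3): making the Taylor expansions rigorous near the low-probability event $\{|B/\sigma + Z| \text{ small}\}$, i.e., controlling the tail contributions where $Z$ is of order $1/\sigma$ and the large-$u$ asymptotics of $\eta_q$ break down. Since $B$ is bounded away from zero by $\mu$ but $Z$ is Gaussian, the event $|B/\sigma + Z| \le \mu/(2\sigma)$ has probability at most $\mathbb{P}(|Z| > \mu/(2\sigma)) \le e^{-c\mu^2/\sigma^2}$, which is super-polynomially small and hence negligible against any power of $\sigma$; I would isolate this event, bound the integrand there crudely using the $1$-Lipschitz property of $\eta_q$ and $\mathbb{E}|B|^2 < \infty$, and carry out the expansion on the complementary event where uniform remainder bounds on the $\eta_q$-inversion are available. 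A secondary technical point is justifying the interchange of expectation and Taylor remainder uniformly in $\sigma$, which again reduces to the second-moment hypothesis on $B$ and the explicit derivative formulas in Lemma \ref{prox:prop}(iv)--(v). Once these tail estimates are in place, the remaining computation is the routine bookkeeping sketched above.
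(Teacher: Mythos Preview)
Your approach is correct and lands on the same decomposition as the paper, but you take a slightly more computational route for the cross term. The paper writes
\[
R_q(\chi,\sigma)-1=\underbrace{\chi^2q^2\mathbb{E}|\eta_q(B/\sigma+Z;\chi)|^{2q-2}}_{S_1}
\;-\;\underbrace{2\chi q(q-1)\,\mathbb{E}\frac{|\eta_q(B/\sigma+Z;\chi)|^{q-2}}{1+\chi q(q-1)|\eta_q(B/\sigma+Z;\chi)|^{q-2}}}_{-S_2},
\]
obtained \emph{exactly} from Lemma~\ref{prox:prop}(ii) for the square term and by applying Stein's lemma directly to $\eta_q(B/\sigma+Z;\chi)$ using the exact derivative formula (iv). It then rescales (via (iii)) and passes to the limit with DCT, splitting the $z$-integral only once into $\{|b/\sigma+z|\le\mu/(2\sigma)\}$ and its complement; on the bad event the $\chi q(q-1)$ floor in the denominator of $S_2$ absorbs the singularity, and the Gaussian tail kills the rest.

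Your plan instead iterates the implicit equation once to replace $|\eta_q|^{q-1}$ by $|u|^{q-1}$ and then Taylor-expands $|B/\sigma+Z|^{q-1}$ in $Z$. This is fine here because $|B|>\mu$ makes all the remainder powers $|B|^{q-3},|B|^{2q-3},\ldots$ bounded, so both approximation layers contribute $o(\sigma^2)$; your identification of the bad event $\{|B/\sigma+Z|\le\mu/(2\sigma)\}$ and its exponentially small probability is exactly what the paper does. The advantage of the paper's version is that by applying Stein to the exact $\eta_q$ (not its one-step iterate) it keeps $S_2$ in closed form until the very end, so only a single DCT argument is needed rather than tracking two separate remainder estimates. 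Conversely, your approach is more transparent about where each coefficient comes from and would generalize more readily if one wanted higher-order terms in the expansion.
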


\begin{proof}
We aim to derive the convergence rate of $R_q(\chi,\sigma)$ when $\chi=C\sigma^q$. In this proof, we may write $\chi$ to denote $C\sigma^q$ for notational simplicity. According to Lemma \ref{prox:prop} parts (ii)(iv) and Stein's lemma, we have the following formula for $R_q(\chi,\sigma)$:
\begin{eqnarray}
&&\hspace{-1.5cm}R_q(\chi, \sigma) -1 =\mathbb{E}(\eta_q(B/\sigma+Z;\chi)-B/\sigma-Z)^2+2\mathbb{E}Z(\eta_q(B/\sigma+Z;\chi)-B/\sigma-Z) \nonumber \\
&& \hspace{-.1cm}= \underbrace{\chi^2 q^2 \mathbb{E} |\eta_q(B / \sigma +Z ; \chi)  |^{2q-2}}_{S_1} \underbrace{- 2  \chi q(q-1) \mathbb{E} \frac{|\eta_q (B/ \sigma + Z; \chi)|^{q-2}}{ 1+ \chi q (q-1) |\eta_q (B/ \sigma +Z ; \chi)|^{q-2}}}_{S_2}.   \label{epsilon:1}
\end{eqnarray}
It is straightforward to confirm the following
\begin{eqnarray}
\lim_{\sigma \rightarrow 0}  \frac{S_1}{\sigma^{2}}& =& \lim_{\sigma \rightarrow 0} \frac{ \chi^2 q^2 \mathbb{E} |\eta_q (B/\sigma + Z; \chi)|^{2q-2} }{\sigma^{2}} \nonumber \\
&=&  C^2 q^2 \lim_{\sigma \rightarrow 0} \mathbb{E} |\eta_q (B +\sigma Z; \chi \sigma^{2-q})|^{2q-2} =  C^2 q^2  \mathbb{E} |B|^{2q-2}.  \label{epsilon:2}
\end{eqnarray}
The last equality is obtained by Dominated Convergence Theorem (DCT). The condition of DCT holds due to Lemma \ref{prox:prop} part (ii). We now focus on analyzing $S_2$. We obtain 
\begin{eqnarray}
\frac{-S_2}{\sigma^{2}} &=& 2 C \sigma^{q-2}q(q-1)  \mathbb{E} \frac{|\eta_q (B/ \sigma + Z; \chi)|^{q-2}}{ 1+ \chi q (q-1) |\eta_q (B/ \sigma +Z ; \chi)|^{q-2}} \nonumber \\
&=&2  C \sigma^{q-2}q(q-1)  \mathbb{E} \frac{1}{ |\eta_q (|B|/ \sigma +Z ; \chi)|^{2-q} + \chi q (q-1) }  \nonumber \\
&=& \underbrace{2  C\sigma^{q-2} q(q-1)  \int_{\mu}^\infty \int_{-b/\sigma - \mu/(2\sigma)}^{-b/\sigma + \mu/(2\sigma)} \frac{1}{ |\eta_q (b/ \sigma +z ; \chi)|^{2-q} +\chi q (q-1)} \phi(z) dz dF(b)}_{T_1} \nonumber \\
&&\hspace{-1.4cm}  + \underbrace{2 C \sigma^{q-2}q(q-1)  \int_{\mu}^\infty \int_{\mathbb{R} \backslash [-b/\sigma - \mu/(2\sigma),-b/\sigma + \mu/(2\sigma)]} \frac{1}{ |\eta_q (b/ \sigma +z ; \chi)|^{2-q} +\chi q (q-1)} \phi(z) dz dF(b)}_{T_2}.  \nonumber 
\end{eqnarray}
We then consider $T_1$ and $T_2$ separately. For $T_1$, we have
\begin{eqnarray}
T_1 &\leq& 2  C\sigma^{q-2} q(q-1)  \int_{\mu}^{\infty}\int_{-\mu/\sigma - \mu/(2\sigma)}^{-\mu/\sigma + \mu/(2\sigma)} \frac{1}{ \chi q (q-1)} \phi(z) dz dF(b)  \nonumber \\
&\leq& 2\sigma^{-3}\mu\phi(\mu/(2\sigma))   \rightarrow 0, ~{\rm as }~ \sigma \rightarrow 0. \label{epsilon:3}
\end{eqnarray}
Regarding $T_2$, DCT enables us to conclude
\begin{eqnarray}
\lim_{\sigma \rightarrow 0}T_2&=&\lim_{\sigma \rightarrow 0} 2 C \sigma^{q-2}q(q-1) \mathbb{E}\frac{\mathbbm{1}(Z \notin  [-|B|/\sigma - \mu/(2\sigma),-|B|/\sigma + \mu/(2\sigma)])}{ |\eta_q (|B|/ \sigma +Z ; \chi)|^{2-q} +\chi q (q-1)} \nonumber \\
&=& \lim_{\sigma \rightarrow 0} 2 Cq(q-1) \mathbb{E}\frac{\mathbbm{1}(Z \notin  [-|B|/\sigma - \mu/(2\sigma),-|B|/\sigma + \mu/(2\sigma)])}{ |\eta_q (|B| +\sigma Z ; \chi \sigma^{2-q})|^{2-q} +C\sigma^2 q (q-1)}  \nonumber \\
&=&2Cq(q-1)\mathbb{E}|B|^{q-2}.  \label{epsilon:4}
\end{eqnarray}
Note that DCT works here because for small enough $\sigma$, Lemma \ref{prox:prop} parts (iv)(v) implies
\begin{eqnarray*}
\frac{\mathbbm{1}(Z \notin  [-|B|/\sigma - \mu/(2\sigma),-|B|/\sigma + \mu/(2\sigma)])}{ |\eta_q (|B| +\sigma Z ; \chi \sigma^{2-q})|^{2-q} +C\sigma^2 q (q-1)} \leq \frac{1}{|\eta_q(\mu/2;\chi\sigma^{2-q})|^{2-q}}\leq \frac{1}{|\eta_q(\mu/2;1)|^{2-q}}.
\end{eqnarray*}
Combining \eqref{epsilon:1}, \eqref{epsilon:2}, \eqref{epsilon:3} and \eqref{epsilon:4} together completes the proof.
\end{proof}

Lemma \ref{epsilon1:tune} shows that by choosing an appropriate $\chi$ for $\sigma$ small enough, $R(\chi, \sigma)$ is less than $1$. This result will be used to show that $\chi_q^*(\sigma)$ cannot converge to zero too fast.  We then utilize this fact to derive the exact convergence rate of $\chi_q^*(\sigma)$. This is done in the next lemma. 

\begin{lemma}\label{lq12:amse}
Suppose that $\mathbb{P}(|B|>\mu)=1$ with $\mu$ being a positive constant and $\mathbb{E}|B|^2<\infty$, then for $q\in (1,2]$ we have as $\sigma \rightarrow 0$
\begin{eqnarray*}
&&\chi_q^*(\sigma)= \frac{(q-1)\mathbb{E}|B|^{q-2}}{q\mathbb{E}|B|^{2q-2}}\sigma^q+o(\sigma^q), \\
&&R_q(\chi_q^*(\sigma),\sigma)=1-\frac{(q-1)^2(\mathbb{E}|B|^{q-2})^2}{\mathbb{E}|B|^{2q-2}}\sigma^2+o(\sigma^2).
\end{eqnarray*}
\end{lemma}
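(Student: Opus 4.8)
The plan is to find the exact rate of $\chi_q^*(\sigma)$ by a minimization argument, and then plug that rate back into $R_q$. The starting point is the expansion from Lemma \ref{epsilon1:tune}, which tells us that along the family $\chi = C\sigma^q$ we have
\[
R_q(C\sigma^q,\sigma) = 1 + \big(C^2 q^2 \mathbb{E}|B|^{2q-2} - 2Cq(q-1)\mathbb{E}|B|^{q-2}\big)\sigma^2 + o(\sigma^2).
\]
The quadratic-in-$C$ coefficient is minimized at $C^\dagger = \frac{(q-1)\mathbb{E}|B|^{q-2}}{q\mathbb{E}|B|^{2q-2}}$, which is exactly the claimed constant, and the minimal value of that coefficient is $-\frac{(q-1)^2(\mathbb{E}|B|^{q-2})^2}{\mathbb{E}|B|^{2q-2}}$. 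So the conjectured answer is what you would guess from optimizing over this one-parameter family; the real work is showing the true optimizer $\chi_q^*(\sigma)$ behaves like $C^\dagger\sigma^q$ and not like something of a different order (e.g.\ $\sigma^{q-\epsilon}$ or $\sigma^{q+\epsilon}$ or a non-power scaling).

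First I would establish the lower bound on the rate, i.e.\ that $\chi_q^*(\sigma)$ cannot go to zero faster than $\sigma^q$. This is where the remark after Lemma \ref{epsilon1:tune} comes in: since $R_q(C\sigma^q,\sigma) \le 1 - c\sigma^2$ for suitable $C$ and small $\sigma$, we must have $R_q(\chi_q^*(\sigma),\sigma) \le 1 - c\sigma^2$ as well. If $\chi_q^*(\sigma) = o(\sigma^q)$ then $S_1 = o(\sigma^2)$ trivially and one needs to show $S_2 = o(\sigma^2)$ too, which would force $R_q(\chi_q^*,\sigma) - 1 = o(\sigma^2)$, contradicting the strict $-c\sigma^2$ bound; this pins the rate from below. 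Conversely, to rule out $\chi_q^*(\sigma)$ being of strictly larger order than $\sigma^q$, I would use that $R_q(\chi,\sigma) \ge 1$ would eventually hold (the $S_1$ term dominates and is positive), again contradicting $R_q(\chi_q^*,\sigma) \le 1 - c\sigma^2$. So $\chi_q^*(\sigma) \asymp \sigma^q$.

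Once the order is known, write $\chi_q^*(\sigma) = C(\sigma)\sigma^q$ with $C(\sigma)$ bounded and bounded away from $0$. The goal is to show $C(\sigma) \to C^\dagger$. The clean way is a first-order optimality (stationarity) condition: at $\chi = \chi_q^*(\sigma)$ we have $\partial_\chi R_q(\chi,\sigma) = 0$ (if the optimum is interior; the boundary case $\chi = 0$ is excluded by the lower-bound step). Differentiating the formula for $R_q - 1$ in \eqref{epsilon:1} with respect to $\chi$, extracting the leading $\sigma^2$-order behavior of $\partial_\chi S_1$ and $\partial_\chi S_2$ exactly as in the proof of Lemma \ref{epsilon1:tune} (same DCT arguments, same truncation of the $Z$-integral away from $-b/\sigma$, using $\mathbb{P}(|B|>\mu)=1$ to keep $\eta_q$ bounded away from $0$), the stationarity equation becomes, to leading order, $2Cq^2\mathbb{E}|B|^{2q-2} - 2q(q-1)\mathbb{E}|B|^{q-2} = o(1)$, i.e.\ $C(\sigma) \to C^\dagger$. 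Alternatively, and perhaps more robustly, one can avoid differentiating by a sandwiching argument: by definition $R_q(\chi_q^*(\sigma),\sigma) \le R_q(C^\dagger\sigma^q,\sigma) = 1 - \frac{(q-1)^2(\mathbb{E}|B|^{q-2})^2}{\mathbb{E}|B|^{2q-2}}\sigma^2 + o(\sigma^2)$, which gives the upper bound on $R_q(\chi_q^*,\sigma)$ immediately; for the matching lower bound on $R_q$, uniformly over $C$ in a compact set the expansion $R_q(C\sigma^q,\sigma) = 1 + (C^2q^2\mathbb{E}|B|^{2q-2} - 2Cq(q-1)\mathbb{E}|B|^{q-2})\sigma^2 + o(\sigma^2)$ holds (one checks the $o(\sigma^2)$ is uniform in $C$ on compacts, again via DCT), and minimizing the explicit quadratic over $C$ recovers the same constant, giving $R_q(\chi_q^*(\sigma),\sigma) \ge 1 - \frac{(q-1)^2(\mathbb{E}|B|^{q-2})^2}{\mathbb{E}|B|^{2q-2}}\sigma^2 + o(\sigma^2)$. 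Combining the two yields the stated expansion of $R_q(\chi_q^*(\sigma),\sigma)$, and feeding $C(\sigma)\to C^\dagger$ back in (this last step needs the strict convexity of the quadratic so that near-optimal $R_q$ forces near-optimal $C$) gives the stated expansion of $\chi_q^*(\sigma)$.

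The main obstacle I anticipate is making the $o(\sigma^2)$ error term in the expansion of $R_q(C\sigma^q,\sigma)$ \emph{uniform} in $C$ over compact subsets of $(0,\infty)$ — Lemma \ref{epsilon1:tune} is stated only for a fixed $C$, but the minimization-over-$C$ argument (and the translation from ``$R_q$ nearly optimal'' to ``$C(\sigma)$ near $C^\dagger$'') really needs uniformity. This requires revisiting the DCT steps \eqref{epsilon:2}, \eqref{epsilon:3}, \eqref{epsilon:4} and checking the dominating functions can be chosen independently of $C$ on a compact range; the bound $|\eta_q(\mu/2;\chi\sigma^{2-q})|^{2-q} \le |\eta_q(\mu/2;1)|^{2-q}$ used there already only needs $\chi\sigma^{2-q} = C\sigma^2 \le 1$, which holds uniformly for bounded $C$ and small $\sigma$, so I expect this to go through, but it is the step that needs genuine care rather than routine computation. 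The secondary subtlety is handling $q=2$ separately — but there $\eta_2(u;\chi) = u/(1+2\chi)$ is explicit, $R_2(\chi,\sigma) = \frac{\sigma^2 + 4\chi^2\mathbb{E}|B|^2}{(1+2\chi)^2}\cdot\frac{1}{\sigma^2}$ type computation is elementary, and the expansion follows by direct calculus, consistently with the formula at $q=2$.
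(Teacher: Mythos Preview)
Your approach is correct and overlaps substantially with the paper's, but there is a meaningful difference in how the stationarity step is executed. The paper, like you, uses the first-order condition $\partial_\chi R_q(\chi,\sigma)=0$ at $\chi=\chi_q^*(\sigma)$, but rather than first pinning down $\chi_q^*\asymp\sigma^q$ and then differentiating the $C$-parametrized expansion (which is where your uniformity concern comes from), it differentiates $R_q$ \emph{directly}: using Lemma \ref{prox:prop} and Stein's lemma, the stationarity equation is written as $\chi^* U_1 - U_2 - \chi^* U_3 = 0$ for three explicit expectations $U_1,U_2,U_3$, and then the asymptotics $\sigma^{2q-2}U_1\to q^2\mathbb{E}|B|^{2q-2}$, $\sigma^{q-2}U_2\to q(q-1)\mathbb{E}|B|^{q-2}$, $\sigma^{2q-4}U_3\to q^2(q-1)\mathbb{E}|B|^{2q-4}$ are established by the same truncated DCT arguments as in Lemma \ref{epsilon1:tune}. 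Solving $\chi^*=U_2/(U_1-U_3)$ then gives the rate in one shot, with no uniformity-in-$C$ needed.

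A second difference is the preliminary lower bound on $\chi_q^*$: you aim for the tight $\chi_q^*=\Omega(\sigma^q)$ (via ``$\chi=o(\sigma^q)\Rightarrow R_q-1=o(\sigma^2)$''), whereas the paper only needs, and proves, the much weaker statement that $\chi_q^*(\sigma)e^{c/\sigma}\to\infty$ for every $c>0$. That weak bound suffices because in the $U_2$ computation the only place $\chi^*$ appears in a denominator is in an integral over the small window $z\in[-b/\sigma\pm\mu/(2\sigma)]$, where the Gaussian weight is already exponentially small; polynomial growth of $1/\chi^*$ is harmless there. Your tighter bound is also correct and not hard to get in the bounded-away-from-zero case (indeed the $T_1,T_2$ split with $\chi\sigma^{2-q}\to 0$ gives it), but it is not needed.

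Your alternative sandwiching route (uniform expansion in $C$ on compacts plus strict convexity of the quadratic) is genuinely different from the paper and would also work; it trades the Stein-lemma computation of $U_1,U_2,U_3$ for the uniformity check you flag, which as you note reduces to verifying the dominating functions in \eqref{epsilon:2}--\eqref{epsilon:4} depend only on an upper bound for $C$. The paper's route is slightly cleaner precisely because it sidesteps that check.
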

\begin{proof}
Choosing $\chi=\frac{(q-1)\mathbb{E}|B|^{q-2}}{q\mathbb{E}|B|^{2q-2}}\cdot \sigma^q$ in Lemma \ref{epsilon1:tune}, we have
\begin{eqnarray}\label{risk:final}
\lim_{\sigma\rightarrow 0}\frac{R_q(\chi,\sigma)-1}{\sigma^2}=-\frac{(q-1)^2(\mathbb{E}|B|^{q-2})^2}{\mathbb{E}|B|^{2q-2}} < 0.
\end{eqnarray}
That means for sufficiently small $\sigma$
\[
R_q(\chi_q^*(\sigma),\sigma)\leq R_q(\chi,\sigma)<1=R_q(0,\sigma).
\]
Hence we can conclude that $\chi_q^*(\sigma) >0$ when $\sigma$ is small enough. Moreover, by a slight change of arguments in the proof of Lemma \ref{epsilon1:tune} summarized below:
\vspace{-0.1cm}
\begin{enumerate}
\item the fact $\chi \sigma^{2-q}=o(1)$ used several times in Lemma \ref{epsilon1:tune} still holds here
\vspace{-0.3cm}
\item $\chi\sigma^{2-q}=o(1)$ and $\chi=o(\sigma^{q})$ are sufficient to have $S_1=o(\sigma^2)$
\vspace{-0.3cm}
\item bounding the term $T_1$ in \eqref{epsilon:3} does not depend on $\chi$
\vspace{-0.3cm}
\item $\chi\sigma^{2-q}=o(1)$ and $\chi=o(\sigma^{q})$ are sufficient to obtain $T_2=o(1)$
\end{enumerate}
\vspace{-0.2cm}
 we can show
\begin{eqnarray}\label{contradict:evid}
\lim_{\sigma\rightarrow 0}\frac{R_q(\chi,\sigma)-1}{\sigma^2}=0,
\end{eqnarray}
for $\chi=O(\exp (-c/\sigma))$ with any fixed positive constant $c$. This implies that $\lim_{\sigma \rightarrow 0} \chi_q^*(\sigma)\cdot e^{c/\sigma}=+\infty$ for any $c>0$. Otherwise there exists a sequence $\sigma_n\rightarrow 0$ such that $\chi_q(\sigma_n)e^{c/\sigma_n}=O(1)$. This result combined with \eqref{risk:final} and \eqref{contradict:evid} contradicts with the fact that $\chi=\chi_q^*(\sigma)$ is the minimizer of $R_q(\chi,\sigma)$. We will use the two aforementioned properties of $\chi_q^*(\sigma)$ we have showed so far in the following proof. For notational simplicity, in the rest of the proof we may use $\chi$ to denote $\chi_q^*(\sigma)$ whenever no confusion is caused. Firstly since $\chi_q^*(\sigma)$ is a non-zero finite value, it is a solution of the first order optimality condition $\frac{\partial R_q(\chi, \sigma)}{\partial \chi}=0$, which can be further written out as 
\begin{eqnarray}
0&=& \mathbb{E}((\eta_q(B/\sigma+Z;\chi)-B/\sigma)\partial_2 \eta_q(B/\sigma+Z;\chi) )  \nonumber \\
&\overset{(a)}{=}& \mathbb{E}\frac{-(\eta_q(B/\sigma+Z;\chi)-B/\sigma-Z)q|\eta_q(B/\sigma+Z;\chi)|^{q-1}\mbox{sign}(B/\sigma+Z)}{1+\chi q(q-1)|\eta_q(B/\sigma+Z;\chi)|^{q-2}} \nonumber \\
&&+\mathbb{E} (Z \partial_2 \eta_q (B/\sigma + Z; \chi)) \nonumber  \\ 
&\overset{(b)}{=}&\chi \underbrace{\mathbb{E}\frac{ q^2|\eta_q(B/\sigma+Z;\chi)|^{2q-2}}{1+\chi q(q-1)|\eta_q(B/\sigma+Z;\chi)|^{q-2}}}_{U_1}-\underbrace{\mathbb{E} \frac{ q(q-1) |\eta_q (B/\sigma+Z; \chi)| ^{4-2q}}{ (|\eta_q (B/\sigma+Z; \chi)|^{2-q}+ \chi q(q-1) )^3}}_{U_2} \nonumber \\
&& - \chi \underbrace{ \mathbb{E} \frac{ q^2(q-1) |\eta_q (B/\sigma+Z; \chi)|^{2-q}}{ (|\eta_q (B/\sigma+Z; \chi)|^{2-q}+ \chi q(q-1) )^3}}_{U_3}.  \label{epsilon1:final}
\end{eqnarray}
We have used Lemma \ref{prox:prop} part (v) to derive $(a)$. To obtain (b), we have used the following steps:
\begin{enumerate}
\item We used Lemma \ref{prox:prop} part (ii) to conclude that \\ 
$$\eta_q(B/\sigma+Z;\chi)-B/\sigma-Z = -\chi q|\eta_q(B/\sigma+Z;\chi)|^{q-1} {\rm sign}(B/\sigma+Z).$$
\item We used the expression we derived in Lemma \ref{prox:prop} part (v) for $\partial_2 \eta_q(B/\sigma+Z;\chi)$ and then employed Stein's lemma to simplify $\mathbb{E} (Z \partial_2 \eta_q (B/\sigma + Z; \chi))$. Note that according to Lemma \ref{prox:prop} part (vi), $\partial_2 \eta_q (B/\sigma + Z; \chi)$ is differentiable with respect to its first argument and hence Stein's lemma can be applied. 
\end{enumerate}
We now evaluate the three terms $U_1$, $U_2$ and $U_3$ individually. Our goal is to show the following:
\begin{itemize}
\item[(i)] $\lim_{\sigma \rightarrow 0} \sigma^{2q-2}U_1=q^2\mathbb{E}|B|^{2q-2}$.
\item[(ii)] $\lim_{\sigma \rightarrow 0} \sigma^{q-2} U_2 =  q(q-1)\mathbb{E}|B|^{q-2} $. 
\item[(iii)]$ \lim_{\sigma \rightarrow 0} \sigma^{2q-4} U_3 = q^2(q-1)\mathbb{E}|B|^{2q-4}.$
\end{itemize}
For the term $U_1$, we can apply Dominated Convergence Theorem (DCT)
\begin{eqnarray*}
\lim_{\sigma \rightarrow 0} \sigma^{2q-2}U_1=\mathbb{E}\lim_{\sigma \rightarrow 0} \frac{ q^2|\eta_q(B+\sigma Z;\chi \sigma^{2-q})|^{2q-2}}{1+\chi \sigma^{2-q} q(q-1)|\eta_q(B+\sigma Z;\chi \sigma^{2-q})|^{q-2}}=q^2\mathbb{E}|B|^{2q-2}.
\end{eqnarray*}
We now derive the convergence rate of $U_2$. We have
\begin{eqnarray}\label{eq:i1components}
U_2 &=& \int_{\mu}^{\infty} \int_{-\infty}^{\infty}  \frac{q(q-1) |\eta_q(b/\sigma+z; \chi)|^{4-2q}}{(|\eta_q(b/\sigma +z ; \chi)|^{2-q} + \chi q(q-1) )^3}  \phi(z)dz dF(b) \nonumber \\
&=& \underbrace{\int_{\mu}^{\infty} \int_{-\frac{b}{\sigma}-\frac{\mu}{2 \sigma} }^{-\frac{b}{\sigma} + \frac{\mu}{2 \sigma}}  \frac{q(q-1) |\eta_q(b/\sigma+z; \chi)|^{4-2q}}{(|\eta_q(b/\sigma +z ; \chi)|^{2-q} + \chi q(q-1) )^3} \phi(z)dz dF(b)}_{U_{21}} \nonumber \\
&&\hspace{-0.3cm}  + \underbrace{\int_{\mu}^{\infty} \int_{ z \notin [-\frac{b}{\sigma}-\frac{\mu}{2 \sigma} ,-\frac{b}{\sigma} 
+ \frac{\mu}{2 \sigma}]}  \frac{q(q-1) |\eta_q(b/\sigma+z; \chi)|^{4-2q}}{(|\eta_q(b/\sigma +z ; \chi)|^{2-q} + \chi q(q-1) )^3}  \phi(z)dz dF(b)}_{U_{22}}.
\end{eqnarray}
First note that 
\begin{eqnarray}\label{eq:i3calc}
&&\sigma^{q-2}U_{21}  \leq \sigma^{q-2}\int_{\mu}^{\infty} \int_{-\frac{b}{\sigma}-\frac{\mu}{2 \sigma} }^{-\frac{b}{\sigma} + \frac{\mu}{2 \sigma}}  \frac{q(q-1) (\mu/(2 \sigma))^{4-2q}}{( \chi q(q-1) )^3} \phi(z)dz dF(b) \nonumber \\
&&\leq  \frac{\mu^{5-2q}\phi(\mu/(2 \sigma)) }{\sigma^{7-3q}2^{4-2q}\chi^3q^2(q-1)^2} \rightarrow 0, {\rm ~as~} \sigma \rightarrow 0,
\end{eqnarray}
where the last step is due to the fact that $\lim_{\sigma \rightarrow 0} \chi e^{c/\sigma}=+\infty$. To evaluate $U_{22}$ we first derive the following bounds for small enough $\sigma$
\begin{eqnarray*}
&& \frac{\mathbbm{1}(z \notin [-\frac{b}{\sigma}-\frac{\mu}{2 \sigma} ,-\frac{b}{\sigma} 
+ \frac{\mu}{2 \sigma}])\cdot q(q-1) |\eta_q(b +\sigma z; \chi \sigma^{2-q})|^{4-2q}}{(|\eta_q(b +\sigma z ; \chi\sigma^{2-q})|^{2-q} + \chi  \sigma^{2-q}q(q-1) )^3} \\
&& \leq \frac{q(q-1)}{|\eta_q(\mu/2 ; \chi \sigma^{2-q})|^{2-q}  }  \leq \frac{q(q-1)}{|\eta_q(\mu/2 ; 1)|^{2-q}  }.
\end{eqnarray*}
Hence we are able to apply DCT to obtain
\begin{eqnarray}\label{eq:i4calc}
\lim_{\sigma \rightarrow 0}\sigma^{q-2}U_{22}=q(q-1)\mathbb{E}|B|^{q-2}.
\end{eqnarray}
Combining \eqref{eq:i1components}, \eqref{eq:i3calc}, and \eqref{eq:i4calc} proves the result (ii). We can use similar arguments to show result (iii). Finally, we utilize the convergence results for $U_1,U_2, U_3$ and Equation \eqref{epsilon1:final} to derive
\begin{eqnarray*}
\lim_{\sigma\rightarrow 0}\frac{\chi}{\sigma^q}=\lim_{\sigma\rightarrow 0}\frac{\lim_{\sigma\rightarrow 0} \sigma^{q-2} U_2}{\lim_{\sigma\rightarrow 0} \sigma^{2q-2} U_2- \lim_{\sigma\rightarrow 0} \sigma^{2q-2} U_3}=\frac{(q-1)\mathbb{E}|B|^{q-2}}{q\mathbb{E}|B|^{2q-2}}.
\end{eqnarray*}
Now since we know the exact convergence order of $\chi_q^*(\sigma)$, \eqref{risk:final} shows the exact order of $R_q(\chi_q^*(\sigma),\sigma)$.
\end{proof}

We are in position to derive the second-order expansion of ${\rm AMSE}(\lambda_{*,q},q,\sigma_w)$ as $\sigma_w \rightarrow 0$ for $q\in (1,2]$. According to Equation \eqref{fixeq:first} and the fact that $\chi=\chi^*_q(\bar{\sigma})$ minimizes $R_q(\chi, \chi^*_q(\bar{\sigma}))$,
it is clear that $\delta(\bar{\sigma}^2-\sigma_w^2)\leq \bar{\sigma}^2R_q(0,\bar{\sigma})=\bar{\sigma}^2$, which combined with the condition $\delta>1$ implies $\bar{\sigma}\rightarrow 0$ as $\sigma_w\rightarrow 0$. This result further enables us to conclude from  \eqref{fixeq:first}:
\begin{eqnarray}\label{theorder:chi}
\lim_{\sigma_w\rightarrow 0}\frac{\bar{\sigma}^2}{\sigma_w^2}=\frac{\delta}{\delta-1},
\end{eqnarray}
where we have used $R_q(\chi_q^*(\bar{\sigma}),\bar{\sigma})\rightarrow 1$ from Lemma \ref{lq12:amse}. We finally utilize Lemma \ref{lq12:amse}, Equations \eqref{amse:formula}, \eqref{fixeq:first} and \eqref{theorder:chi} to derive the expansion of ${\rm AMSE}(\lambda_{*,q},q,\sigma_w)$ in the following way:
\begin{eqnarray*}
&&\hspace{-0.5cm}\sigma_w^{-4}\Big({\rm AMSE}(\lambda_{*,q},q,\sigma_w)-\frac{\sigma_w^2}{1-1/\delta}\Big)=\sigma_w^{-4}\Big(  \bar{\sigma}^2R_q(\chi_q^*(\bar{\sigma}),\bar{\sigma})-\frac{\delta}{\delta-1}(\bar{\sigma}^2-\frac{1}{\delta}\bar{\sigma}^2R_q(\chi_q^*(\bar{\sigma}),\bar{\sigma}))  \Big) \\
&&=\frac{\delta}{\delta-1}\cdot \frac{\bar{\sigma}^4}{\sigma_w^4} \cdot \frac{R_q(\chi_q^*(\bar{\sigma}),\bar{\sigma})-1}{\bar{\sigma}^2}\rightarrow \frac{-\delta^3(q-1)^2(\mathbb{E}|B|^{q-2})^2}{(\delta-1)^3 \mathbb{E}|B|^{2q-2}}.
\end{eqnarray*}
This completes the proof of Theorem \ref{thm:densebbiggerthanmu} for $q\in (1,2]$.

\subsubsection{Proof for the case $q=1$} \label{lqls:away:q1}

\begin{lemma}\label{dense:l1}
Suppose that $\mathbb{P}(|B|>\mu)=1$ with $\mu$ being a positive constant and $\mathbb{E}|B|^2<\infty$, then for $q=1$ as $\sigma \rightarrow 0$
\begin{eqnarray*}
\chi_q^*(\sigma)=O(\phi(\mu/\sigma)), \quad R_q(\chi_q^*(\sigma),\sigma)-1= O(\phi^2(\mu/\sigma)).
\end{eqnarray*}
\end{lemma}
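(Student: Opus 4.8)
The plan is to reduce the whole statement to the SURE identity for soft-thresholding plus elementary Gaussian tail estimates, exploiting that $\mathbb{P}(|B|>\mu)=1$ to pin the relevant tails at order $\phi(\mu/\sigma)$. Since $\eta_1(u;\chi)=\mathrm{sign}(u)(|u|-\chi)_+$ obeys $\eta_1(u;\chi)-u=-\mathrm{sign}(u)\min(|u|,\chi)$ and $\partial_u\eta_1(u;\chi)=\mathbbm{1}(|u|>\chi)$ a.e., Stein's lemma applied conditionally on $B$ gives
\[
R_1(\chi,\sigma)=1+\mathbb{E}\big[\min(|B/\sigma+Z|,\chi)^2\big]-2\,\mathbb{P}(|B/\sigma+Z|\le\chi).
\]
In particular $R_1(0,\sigma)=1$, so optimality of $\chi_1^*(\sigma)$ forces $R_1(\chi_1^*,\sigma)\le 1$, i.e.
\[
\mathbb{E}\big[\min(|B/\sigma+Z|,\chi_1^*)^2\big]\ \le\ 2\,\mathbb{P}(|B/\sigma+Z|\le\chi_1^*),
\]
and the only other ingredient is the bound, valid for every $b\ge\mu$ and $0\le\chi<b/\sigma$,
\[
\mathbb{P}(|b/\sigma+Z|\le\chi)=\int_{-\chi}^{\chi}\phi(b/\sigma-s)\,ds\ \le\ 2\chi\,\phi(b/\sigma-\chi)\ \le\ 2\chi\,\phi(\mu/\sigma-\chi),
\]
which survives taking expectations in $B$ because $|B|\ge\mu$ a.s.

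Next I would localize $\chi_1^*(\sigma)$. A crude preliminary bound: for $\chi\ge\mu/(2\sigma)$, on the event $\{|Z|\le\mu/(4\sigma)\}$ (whose probability tends to $1$) the error $\eta_1(B/\sigma+Z;\chi)-B/\sigma$ has modulus at least $\mu/(4\sigma)$ for every value of $B$ — either $B/\sigma+Z$ is thresholded to zero, so the error is $|B|/\sigma\ge\mu/\sigma$, or it is merely shrunk, so the error is $|Z-\chi|\ge\chi-|Z|\ge\mu/(4\sigma)$ — hence $R_1(\chi,\sigma)\gtrsim\mu^2/\sigma^2\to\infty$ and therefore $\chi_1^*(\sigma)<\mu/(2\sigma)$ for small $\sigma$. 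On this range the tail bound gives $\mathbb{P}(|B/\sigma+Z|\le\chi_1^*)\le(\mu/\sigma)\phi(\mu/(2\sigma))\to 0$, so $\mathbb{P}(|B/\sigma+Z|>\chi_1^*)\ge\tfrac12$ eventually, and then $\mathbb{E}[\min(|B/\sigma+Z|,\chi_1^*)^2]\ge\tfrac12(\chi_1^*)^2$. Feeding this and the tail bound into the optimality inequality yields a first crude estimate $\chi_1^*(\sigma)\le 8\,\phi(\mu/(2\sigma))$. The crux is now to bootstrap: since $\chi_1^*(\sigma)\cdot\mu/\sigma\le 8\phi(\mu/(2\sigma))\,\mu/\sigma\to0$, one may write $\phi(\mu/\sigma-\chi_1^*)=\phi(\mu/\sigma)\,e^{\chi_1^*\mu/\sigma-(\chi_1^*)^2/2}=(1+o(1))\phi(\mu/\sigma)$, improving the tail bound to $\mathbb{P}(|B/\sigma+Z|\le\chi_1^*)\le(2+o(1))\chi_1^*\phi(\mu/\sigma)$, and reinserting this into $\tfrac12(\chi_1^*)^2\le 2\,\mathbb{P}(|B/\sigma+Z|\le\chi_1^*)$ gives $\chi_1^*(\sigma)\le(8+o(1))\phi(\mu/\sigma)$, the first assertion.

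The risk estimate is then immediate: from the SURE identity and $\min(\cdot,\cdot)^2\ge0$,
\[
0\ \ge\ R_1(\chi_1^*,\sigma)-1\ \ge\ -2\,\mathbb{P}(|B/\sigma+Z|\le\chi_1^*)\ \ge\ -(4+o(1))\,\chi_1^*\,\phi(\mu/\sigma)\ =\ O(\phi^2(\mu/\sigma)),
\]
using optimality, the refined tail bound, and $\chi_1^*=O(\phi(\mu/\sigma))$; hence $R_1(\chi_1^*(\sigma),\sigma)-1=O(\phi^2(\mu/\sigma))$.

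The step I expect to be the real obstacle is precisely that bootstrap. Any single tail estimate that must hold for all $\chi$ up to $\mu/(2\sigma)$ can only detect $\phi(\mu/(2\sigma))$, since that is the largest allowed shift of the Gaussian density; only the self-consistency $\chi_1^*\lesssim\phi(\cdot)=o(1)$ lets one re-center at $\mu/\sigma$ and recover the sharp exponent. Making this airtight — especially the uniform control $e^{\chi_1^*\mu/\sigma}=1+o(1)$ over the support of $|B|$, which is clean here only because $|B|\ge\mu$ a.s. — is where the care is concentrated; the remaining steps are routine Gaussian calculus, and no integrability of $B$ beyond the standing hypothesis is actually used, since every tail bound above is deterministic in $B$.
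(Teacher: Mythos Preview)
Your proof is correct. Both your argument and the paper's rest on the same Stein/SURE identity
\[
R_1(\chi,\sigma)-1=\mathbb{E}\big[\min(|B/\sigma+Z|,\chi)^2\big]-2\,\mathbb{P}(|B/\sigma+Z|\le\chi)
\]
and the same Gaussian tail estimate $\mathbb{P}(|B/\sigma+Z|\le\chi)\le 2\chi\,\phi(\mu/\sigma-\chi)$, followed by a bootstrap to pass from a coarse bound on $\chi_1^*$ to the sharp $O(\phi(\mu/\sigma))$. The one genuine difference is the optimality input: the paper first proves $\chi_1^*(\sigma)\to 0$ by a separate Fatou/contradiction argument and then differentiates, using the stationarity equation $\partial_\chi R_1(\chi^*,\sigma)=0$ to obtain the explicit identity $\chi^*=\big(\mathbb{E}\phi(\chi^*-B/\sigma)+\mathbb{E}\phi(\chi^*+B/\sigma)\big)/\mathbb{E}\mathbbm{1}(|Z+B/\sigma|\ge\chi^*)$, which it then bootstraps. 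You instead use only the zeroth-order inequality $R_1(\chi_1^*,\sigma)\le R_1(0,\sigma)=1$, lower-bound $\mathbb{E}[\min(\cdot,\chi_1^*)^2]\ge\tfrac12(\chi_1^*)^2$, and bootstrap from there. Your route is slightly more elementary (no differentiation of the risk, no separate $\chi^*\to0$ step), while the paper's stationarity equation gives a cleaner exact relation that would be more informative if one wanted a matching lower bound on $\chi_1^*$ or a sharper constant. For the $O(\cdot)$ statements claimed, both approaches are equally adequate.
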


\begin{proof}
We first claim that $\chi_q^*(\sigma)\rightarrow 0$ as $\sigma\rightarrow 0$. Otherwise, there exists a sequence $\sigma_n\rightarrow 0$ such that $\chi_q^*(\sigma_n)\rightarrow C>0$ as $n \rightarrow \infty$. And the limit $C$ is finite. Suppose this is not true, then since $\eta_1(u;\chi)={\rm sign}(u)(|u|-\chi)_+$ we can apply Fatou's lemma to conclude
\begin{eqnarray*}
\liminf_{n\rightarrow \infty} R_q(\chi^*_q(\sigma_n),\sigma_n)\geq \mathbb{E}\liminf_{n\rightarrow \infty} (\eta_1(B/\sigma_n+Z;\chi^*_q(\sigma_n))-B/\sigma_n)^2=+\infty,
\end{eqnarray*}
contradicting with the fact $R_q(\chi^*_q(\sigma_n),\sigma_n)\leq R_q(0,\sigma_n)=1$. We  now calculate the following limit:
\begin{eqnarray*}
&&\lim_{n\rightarrow \infty} R_q(\chi^*_q(\sigma_n),\sigma_n)=\lim_{n\rightarrow \infty}\mathbb{E} (\eta_1(B/\sigma_n+Z;\chi^*_q(\sigma_n))-B/\sigma_n-Z)^2 \\
&&+2\lim_{n\rightarrow \infty} \mathbb{E}Z(\eta_1(B/\sigma_n+Z;\chi^*_q(\sigma_n))-B/\sigma_n-Z)+1=C^2+1.
\end{eqnarray*}
The last step is due to Dominated Convergence Theorem (DCT). The condition of DCT can be verified based on the fact $|u-\eta_1(u;\chi)|\leq \chi$. We can also choose a positive constant $\tilde{C}$ smaller than $C$ and use similar argument to obtain $\lim_{n\rightarrow \infty} R_q(\tilde{C},\sigma_n)=\tilde{C}^2+1$. That means $R_q(\tilde{C},\sigma_n)< R_q(\chi^*_q(\sigma_n),\sigma_n)$ when $n$ is large enough. This is contradicting with the fact $\chi=\chi_q^*(\sigma_n)$ minimizes $R_q(\chi,\sigma_n)$.

We next derive the following bounds:
\begin{eqnarray*}
R_q(\chi,\sigma)-1&=&\mathbb{E}(\eta_1(B/\sigma+Z;\chi)-B/\sigma-Z)^2+2\mathbb{E}(Z(\eta_1(B/\sigma+Z;\chi)-B/\sigma-Z))\\
&\overset{(a)}{=}&\mathbb{E}(\eta_1(B/\sigma+Z;\chi)-B/\sigma-Z)^2+2\mathbb{E}(\partial_1 \eta_1(B/\sigma+Z;\chi)-1)\\
&\overset{(b)}{\leq}& \chi^2 - 2\mathbb{E}\int_{-B/\sigma-\chi}^{-B/\sigma+\chi}\phi(z)dz\overset{(c)}{=} \chi^2 - 4\chi \mathbb{E}\phi(-B/\sigma+\alpha\chi).
\end{eqnarray*}
To obtain (a) we used Stein's lemma; note that $\eta_1(u;\chi)$ is a weakly differentiable function of $u$. Inequality (b) holds since $|\eta_1(u; \chi) - u| \leq \chi$. Equality (c) is the result of the mean value theorem and hence $|\alpha|\leq 1$ is dependent on $B$. From the above inequality, it is straightforward to verify that if we choose $\chi=3e^{-1}\mathbb{E}\phi(\sqrt{2}B/\sigma)$, then 
\begin{eqnarray}
R_q(\chi_q^*(\sigma),\sigma) \leq R_q(\chi,\sigma)<1=R_q(0,\sigma), \label{negative:value}
\end{eqnarray}
for small enough $\sigma$. This means the optimal threshold $\chi^*_q(\sigma)$ is a non-zero finite value.  Hence it is a solution to $\frac{\partial R_q(\chi^*_q(\sigma),\sigma)}{\partial \chi}=0$, which further implies (from now on we use $\chi^*$ to represent $\chi^*_q(\sigma)$ for simplicity): 
\begin{eqnarray}
\chi^*=\frac{\mathbb{E}\phi(\chi^*-B/\sigma)+\mathbb{E}\phi(\chi^*+B/\sigma)}{\mathbb{E}\mathbbm{1}(|Z+B/\sigma|\geq \chi^*)}\leq \frac{2\mathbb{E}\phi(\chi^*-|B|/\sigma)}{\mathbb{E}\mathbbm{1}(|Z+B/\sigma|\geq \chi^*)}\leq  \frac{2\phi(\chi^*-\mu/\sigma)}{\mathbb{E}\mathbbm{1}(|Z+B/\sigma|\geq \chi^*)}, \label{l1:epsilon1}
\end{eqnarray}
where the last inequality holds for small values of $\sigma$ due to the condition $\mathbb{P}(|B|>\mu)=1$. Since $\mathbb{E}\mathbbm{1}(|Z+B/\sigma|\geq \chi^*)\rightarrow 1$, as $\sigma \rightarrow 0$ and $\phi(\chi^*-\mu/\sigma)\leq \phi(\mu/(\sqrt{2}\sigma))e^{(\chi^*)^2/2}$, from \eqref{l1:epsilon1} we can first conclude $\chi^*=o(\sigma)$, which in turn (use \eqref{l1:epsilon1} again) implies $\chi^*=O(\phi(\mu/\sigma))$. 

We now turn to analyzing $R_q(\chi^*,\sigma)$:
\begin{eqnarray*}
R_q(\chi^*,\sigma)-1&=&\mathbb{E}(\eta_1(B/\sigma+Z;\chi^*)-B/\sigma-Z)^2+2\mathbb{E}(\partial_1 \eta_1(B/\sigma+Z;\chi^*)-1) \\
&\geq&-2\mathbb{E}\mathbbm{1}(|B/\sigma+Z|\leq \chi^*) \geq -2\int_{-\mu/\sigma-\chi^*}^{-\mu/\sigma+\chi^*}\phi(z)dz\geq -4\chi^*\phi(\mu/\sigma-\chi^*) \\
&\overset{(d)}{\geq}&\frac{-8\phi^2(\chi^*-\mu/\sigma)}{\mathbb{E}\mathbbm{1}(|Z+B/\sigma|\geq \chi^*)}\overset{(e)}{\sim} -8\phi^2(\mu/\sigma),
\end{eqnarray*}
where $(d)$ is due to \eqref{l1:epsilon1} and (e) holds because $\mathbb{E}\mathbbm{1}(|Z+B/\sigma|\geq \chi^*)\rightarrow 1$ and $\chi^*=o(\sigma)$. This result combined with $R_q(\chi^*,\sigma)-1<0$ from \eqref{negative:value} finishes the proof. 
\end{proof}

We are in position to derive the expansion of ${\rm AMSE}(\lambda_{*,1},1,\sigma_w)$. Similarly as in the proof  for $q\in (1,2]$, we can use Lemma \ref{dense:l1} to derive \eqref{theorder:chi} for $q=1$. Then we apply Lemma \ref{dense:l1} again to obtain
\begin{eqnarray*}
&&{\rm AMSE}(\lambda_{*,1},1,\sigma_w)-\frac{\delta}{\delta-1}\sigma_w^2=\bar{\sigma}^2R_q(\chi^*_q(\bar{\sigma}),\bar{\sigma})-\frac{\delta}{\delta-1}(\bar{\sigma}^2-\bar{\sigma}^2R_q(\sigma^*_q(\bar{\sigma}),\bar{\sigma})/\delta) \\
&&=\frac{\delta \bar{\sigma}^2(R_q(\chi^*_q(\bar{\sigma}),\bar{\sigma})-1)}{\delta-1}=o({\rm exp}(-\bar{\mu}^2/\bar{\sigma}^2))=o({\rm exp}(-\tilde{\mu}^2(\delta-1)/(\delta \sigma_w^2))),
\end{eqnarray*}
where $0<\tilde{\mu}<\bar{\mu}<\mu$. This closes the proof. 

\subsection{Proof of Theorem \ref{thm:densegeneralcase}} \label{proof:thm4}
Similar to the proof of Theorem \ref{thm:densebbiggerthanmu}, we consider two cases, i.e. $1<q  \leq 2$ and $q=1$, and prove them separately. We will follow closely the roadmap illustrated in Section \ref{sec:roadmap}.

\subsubsection{Proof for the case $1<q < 2$}
Again all the results in this section can be proved easily for $q=2$. We will only consider $1<q<2$. Before we start the proof of our main result, we mention a simple lemma that will be used multiple times in our proof. 

\begin{lemma}\label{lem:asymptoticbehavior}
Let $T(\sigma)$ and $\chi(\sigma)$ be two nonnegative sequences with the property: $\chi(\sigma) T^{q-2}(\sigma) \rightarrow 0$, as $\sigma \rightarrow 0$. Then,
\[
\lim_{\sigma \rightarrow 0} \frac{\eta_q(T(\sigma) , \chi(\sigma))}{T(\sigma)} = 1. 
\]
\end{lemma}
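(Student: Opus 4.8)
The plan is to use the implicit characterization of $\eta_q$ from Lemma \ref{prox:prop}(ii). Writing $x_\sigma = \eta_q(T(\sigma);\chi(\sigma))$, and noting that $T(\sigma)\ge 0$ forces $x_\sigma\ge 0$ (by the monotonicity and sign properties of the proximal operator), the identity in part (ii) reads
\[
T(\sigma) = x_\sigma + \chi(\sigma)\, q(q-1)\, x_\sigma^{\,q-1}.
\]
Dividing through by $T(\sigma)$ gives
\[
1 = \frac{x_\sigma}{T(\sigma)} + \chi(\sigma)\, q(q-1)\, \frac{x_\sigma^{\,q-1}}{T(\sigma)},
\]
so the claim $x_\sigma/T(\sigma)\to 1$ is equivalent to showing that the second term on the right tends to $0$.

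First I would record the elementary bound $0 \le x_\sigma \le T(\sigma)$, which follows from the defining identity since the extra term $\chi(\sigma) q(q-1) x_\sigma^{q-1}$ is nonnegative; this also shows $x_\sigma \to 0$ whenever $T(\sigma)\to 0$, but more importantly it lets me control the second term. Using $x_\sigma \le T(\sigma)$ and $q-1>0$, I get
\[
0 \le \chi(\sigma)\, q(q-1)\, \frac{x_\sigma^{\,q-1}}{T(\sigma)} \le q(q-1)\, \chi(\sigma)\, T(\sigma)^{\,q-2},
\]
and the right-hand side converges to $0$ by the hypothesis $\chi(\sigma) T(\sigma)^{q-2}\to 0$. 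Hence the second term vanishes and $x_\sigma/T(\sigma)\to 1$, which is exactly the assertion.

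There is really no hard obstacle here; the only point requiring a moment's care is justifying $0\le x_\sigma \le T(\sigma)$ (equivalently $\eta_q(T;\chi)\in[0,T]$ for $T\ge 0$), which I would get either directly from Lemma \ref{prox:prop}(ii) as above, or from Lemma \ref{prox:prop}(iv) together with part (i): $\eta_q(0;\chi)=0$ and $0\le \partial_u \eta_q(u;\chi)\le 1$, so $\eta_q$ is nondecreasing and $1$-Lipschitz, giving $0\le \eta_q(T;\chi)\le T$. After that the estimate above is immediate. This lemma will then be applied repeatedly in the case $1<q<2$ of Theorem \ref{thm:densegeneralcase} to replace $\eta_q(T(\sigma);\chi(\sigma))$ by $T(\sigma)(1+o(1))$ inside the various integrals defining $R_q(\chi,\sigma)$.
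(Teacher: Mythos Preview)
Your argument is correct, but it differs slightly from the paper's. The paper invokes the scale invariance in Lemma~\ref{prox:prop}(iii), namely $\eta_q(T;\chi)/T=\eta_q(1;\chi T^{q-2})$, and then lets $\chi T^{q-2}\to 0$ together with part (ii) (which gives $\eta_q(1;0)=1$) to conclude. Your route stays with the implicit equation from part (ii) directly, obtains the sandwich $0\le x_\sigma\le T(\sigma)$, and bounds the remainder term by $q(q-1)\chi(\sigma)T(\sigma)^{q-2}\to 0$. Both arguments are equally short; the paper's rescaling is a one-liner but requires continuity of $\chi\mapsto\eta_q(1;\chi)$ at $0$ (immediate from (ii) or (v)), while yours is slightly more self-contained since the bound $x_\sigma\le T(\sigma)$ and the remainder estimate follow from the single identity in (ii) without needing the scaling property. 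Either way the lemma is elementary, and your explanation of how it will be used downstream is accurate.
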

\begin{proof}
The proof is a simple application of scale invariance property of $\eta_q$, i.e, Lemma \ref{prox:prop} part (iii). We have
\begin{eqnarray*}
\lim_{\sigma \rightarrow 0} \frac{\eta_q(T(\sigma) , \chi(\sigma))}{T(\sigma)} = \lim_{\sigma \rightarrow 0}  \eta_q(1; \chi(\sigma) T^{q-2} (\sigma)) =1,
\end{eqnarray*}
where the last step is the result of Lemma \ref{prox:prop} part (ii). 
\end{proof}

Our first goal is to show that when $\chi = C \sigma^q$, then $\lim_{\sigma \rightarrow 0} \frac{R(\chi, \sigma) -1}{\sigma^2}$ is a negative constant by choosing an appropriate $C$. However, since this proof is long, we break it to several steps. These steps are summarized in Lemmas \ref{lem:firstdivisionconstantinterval}, \ref{lem:limitzerocloseto1power}, and \ref{lem:funcupperbound1}. Then in Lemma \ref{epsilon1:tune:lq:general} we employ these three results to show that if $\chi = C \sigma^q$, then
\begin{eqnarray*}
\lim_{\sigma \rightarrow 0}\frac{R(\chi,\sigma)-1}{\sigma^2}=C^2q^2\mathbb{E}|B|^{2q-2}-2Cq(q-1)\mathbb{E}|B|^{q-2}.
\end{eqnarray*}

\begin{lemma}\label{lem:firstdivisionconstantinterval}
For any given $q\in (1,2)$, suppose that $\mathbb{P}(|B| < t) = O(t^{2-q+\epsilon})$ (as $t\rightarrow 0$) with $\epsilon$ being any positive constant, $\mathbb{E}|B|^2 < \infty$ and $\chi = C \sigma^q$, where $C>0$ is a fixed number. Then we have
\[
\sigma^{q-2} \int_0^\infty \int_{\frac{-b}{\sigma} - \alpha}^{\frac{-b}{\sigma} + \alpha} \frac{1}{|\eta_q(b/\sigma+z ; \chi)|^{2-q} + \chi q(q-1)} \phi(z)dz dF(b) \rightarrow 0,
\]
as $\sigma \rightarrow 0$. Note that $\alpha$ is an arbitrary positive constant.
\end{lemma}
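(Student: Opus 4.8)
\textbf{Proof plan for Lemma~\ref{lem:firstdivisionconstantinterval}.}
The goal is to show that the ``near-cancellation'' region $z\in[-b/\sigma-\alpha,\,-b/\sigma+\alpha]$, on which $b/\sigma+z$ is of order $1$ (so that $\eta_q(b/\sigma+z;\chi)$ is itself of order $1$ and the integrand is not small), contributes negligibly after multiplication by $\sigma^{q-2}$. The key structural fact is that $\chi=C\sigma^q$, so $\chi q(q-1)=\Theta(\sigma^q)$; on the near-cancellation region $|\eta_q(b/\sigma+z;\chi)|^{2-q}=O(1)$, hence the integrand is bounded by an absolute constant $M$. Therefore the whole quantity is bounded by
\[
\sigma^{q-2}\, M \int_0^\infty \int_{-b/\sigma-\alpha}^{-b/\sigma+\alpha}\phi(z)\,dz\, dF(b)
\;\le\; \sigma^{q-2}\, M\int_0^\infty \mathbb{P}\!\left(\Big|\tfrac{Z+b/\sigma}{1}\Big|\le\alpha\right)dF(b)
\;=\;\sigma^{q-2}\, M\,\mathbb{E}\,\mathbb{P}\big(|Z+|B|/\sigma|\le\alpha\,\big|\,B\big).
\]
The plan is to split the outer integral over $b$ according to the size of $b$ relative to $\sigma$, because the inner Gaussian probability behaves very differently in the two regimes.

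\emph{Small-$b$ regime $(0\le b\le \sigma\log(1/\sigma)$, say).} Here I bound the inner probability crudely by $2\alpha\phi(0)\le 1$, so this part is at most $\sigma^{q-2}M\,\mathbb{P}(|B|\le \sigma\log(1/\sigma))$. By the hypothesis $\mathbb{P}(|B|<t)=O(t^{2-q+\epsilon})$, this is $O\!\big(\sigma^{q-2}\cdot\sigma^{2-q+\epsilon}(\log(1/\sigma))^{2-q+\epsilon}\big)=O\!\big(\sigma^{\epsilon}(\log(1/\sigma))^{2-q+\epsilon}\big)\to0$. (One could also just use a power of $\sigma$ between $0$ and $1$ in place of the log factor; the log version keeps constants clean.)

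\emph{Large-$b$ regime $(b> \sigma\log(1/\sigma))$.} Here $b/\sigma>\log(1/\sigma)\to\infty$, and on the inner interval $|z|\ge b/\sigma-\alpha\ge \log(1/\sigma)-\alpha$, so the Gaussian tail gives $\int_{-b/\sigma-\alpha}^{-b/\sigma+\alpha}\phi(z)\,dz\le 2\alpha\,\phi(b/\sigma-\alpha)\le 2\alpha\,\phi\big(\tfrac12 b/\sigma\big)$ for $\sigma$ small enough (absorbing $\alpha$ and the at-most-linear correction into the Gaussian decay, exactly as in the bounds \eqref{epsilon:3} and \eqref{eq:i3calc}). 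Thus this part is at most $\sigma^{q-2}\cdot 2\alpha M\,\mathbb{E}\,\phi\big(|B|/(2\sigma)\big)$. Since $\phi(|B|/(2\sigma))\le \phi\big(\log(1/\sigma)/2\big)= (2\pi)^{-1/2}\sigma^{\,(\log(1/\sigma))/8}$ decays faster than any polynomial in $\sigma$, the factor $\sigma^{q-2}$ (a fixed negative power) is swamped and the bound tends to $0$. Combining the two regimes proves the claim.

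\emph{Main obstacle.} The only genuinely delicate point is the uniform-in-$b$ lower bound $|\eta_q(b/\sigma+z;\chi)|\ge c>0$ on the near-cancellation region that makes the integrand bounded: when $b/\sigma+z$ is $O(1)$ one must check, via the scale invariance $\eta_q(u;\chi)=\sigma\,\eta_q(u/\sigma;\chi\sigma^{q-2})\cdot\ldots$ — more precisely via Lemma~\ref{prox:prop}(iii) and the monotonicity in $u$ from part~(iv) together with $\chi\sigma^{q-2}=C\sigma^{2}\to0$ — that $\eta_q$ does not degenerate, so that $|\eta_q(b/\sigma+z;\chi)|^{2-q}+\chi q(q-1)$ stays bounded away from $0$ on any interval of $u=b/\sigma+z$ bounded away from $0$, and is itself $O(1)$ on a bounded interval. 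The subtlety is that $u$ can be anywhere in $[-\alpha,\alpha]$ including near $0$; but for $u$ near $0$, $\eta_q(u;\chi)$ is also near $0$, which only makes the integrand \emph{larger}, not smaller — so one instead bounds the integrand on that sub-interval directly by $1/(\chi q(q-1))=\Theta(\sigma^{-q})$, which is exactly the bound used in \eqref{epsilon:3}, and notes that this sub-interval of $z$ has length $O(1)$ and sits at $|z|\approx b/\sigma$, so it is again controlled by the Gaussian tail in the large-$b$ regime and by the small-probability hypothesis in the small-$b$ regime. Handling the crossover $u\approx0$ cleanly — deciding for each portion of the $z$-interval whether to use the ``$O(1)$ integrand'' bound or the ``$\Theta(\sigma^{-q})$ integrand times Gaussian tail'' bound — is where the bookkeeping is most easily botched, so I would set it up as a single estimate: integrand $\le \min\{M,\ (\chi q(q-1))^{-1}\}$ is not quite enough, and instead I would write integrand $\le (|\eta_q(b/\sigma+z;\chi)|^{2-q}+\chi q(q-1))^{-1}\le \max\{M,\ (\chi q(q-1))^{-1}\mathbbm{1}(|b/\sigma+z|\le\rho)\}$ for a small fixed $\rho$, and route the two pieces through the two regimes above.
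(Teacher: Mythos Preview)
Your plan has a genuine gap at exactly the point you flagged as the ``main obstacle,'' and the fix you propose with a \emph{fixed} $\rho>0$ does not close it. Consider the piece where $|u|=|b/\sigma+z|\le\rho$ in your small-$b$ regime $b\le\sigma\log(1/\sigma)$ (or $b\le \tilde C\sigma\sqrt{\log(1/\sigma)}$; the choice does not matter). There the only available bound on the integrand is $(\chi q(q-1))^{-1}=\Theta(\sigma^{-q})$, the $z$-set has Lebesgue measure $2\rho$, and $\phi(z)\le\phi(0)$, so the contribution is of order
\[
\sigma^{q-2}\cdot\sigma^{-q}\cdot\rho\cdot\mathbb{P}\big(|B|\le \tilde C\sigma\sqrt{\log(1/\sigma)}\big)
\;=\;O\!\big(\sigma^{-2}\cdot\sigma^{2-q+\epsilon}(\log(1/\sigma))^{(2-q+\epsilon)/2}\big)
\;=\;O\!\big(\sigma^{\epsilon-q}(\log(1/\sigma))^{(2-q+\epsilon)/2}\big),
\]
which \emph{diverges} because $q>1$ and (as the paper takes) $\epsilon\in(0,q-1)$. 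Shrinking $\rho$ with $\sigma$ does not save the argument in one step either: if you set $\rho=\sigma^{a}$, the $|u|\le\rho$ piece forces $a>q-\epsilon$, while on $\rho<|u|\le\alpha$ the best integrand bound is $\sim\rho^{q-2}$ (via Lemma~\ref{lem:asymptoticbehavior}), which forces $a<\epsilon/(2-q)$; these two constraints are incompatible for small $\epsilon$ (e.g.\ for $q$ near $1$ they require $\epsilon>\tfrac{q(2-q)}{3-q}\approx\tfrac12$, contradicting $\epsilon<q-1$).

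This is precisely why the paper's proof does not use a single split but a \emph{chaining} through nested intervals $\mathcal I_{-1}\subset\mathcal I_0\subset\cdots\subset\mathcal I_{m^*+2}$. The innermost interval $\mathcal I_{-1}$ has width $\Theta(\sigma^{q-\epsilon}/\log(1/\sigma))$ --- small enough that the crude $\sigma^{-q}$ bound on the integrand is offset by the tiny length --- and then on each annulus $\mathcal I_i\setminus\mathcal I_{i-1}$ one uses $|\eta_q(u;\chi)|\sim |u|\ge \tfrac12|\mathcal I_{i-1}|$ (Lemma~\ref{lem:asymptoticbehavior}) to get a progressively weaker, but still sufficient, integrand bound. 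The number of steps $m^*$ is dictated by $q$ and $\epsilon$ so that the outermost annulus reaches the fixed interval $[-\alpha,\alpha]$. Your two-regime split over $b$ is the right outer structure (and matches the paper), but the inner $u$-analysis must be this multi-scale chain; a two-piece $|u|\lessgtr\rho$ split cannot balance the competing constraints.
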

\begin{proof}
The main idea of the proof is to break this integral into several pieces and prove that each piece converges to zero. Throughout the proof, we will choose $\epsilon$ small enough to be in $(0,q-1)$. Based on the value of $q$, we consider the following intervals. First find the unique non-negative integer of $m^*$  such that 
\[
q \in [2- (\epsilon/(\epsilon+q-1))^{\frac{1}{m^*+1}}, 2-  (\epsilon/(\epsilon+q-1))^{\frac{1}{m^*}} ). 
\]
Denote $\mathcal{S}_m^n(l)=l^m+l^{m+1}+\cdots+l^n (m\leq n)$. Now we define the following intervals: 
\begin{eqnarray}\label{eq:intervaldefsI0_I_i}
\mathcal{I}_{-1} &=& \left[-\frac{b}{\sigma}- \frac{\sigma^{q-\epsilon}}{\log(\frac{1}{\sigma})},  -\frac{b}{\sigma}+ \frac{\sigma^{q-\epsilon}}{\log(\frac{1}{\sigma})} \right],  \nonumber \\
\mathcal{I}_i &=& \left[ -\frac{b}{\sigma} - \frac{\sigma^{ \frac{\epsilon+q-1}{q-1}(2-q)^i-\frac{\epsilon}{q-1}}}{(\log (1/\sigma))^{\mathcal{S}_0^i(2-q)}}, -\frac{b}{\sigma} + \frac{\sigma^{\frac{\epsilon+q-1}{q-1}(2-q)^i-\frac{\epsilon}{q-1}}}{(\log (1/\sigma))^{\mathcal{S}_0^i(2-q)}} \right], \ \ \ 0 \leq i \leq m^*, \nonumber \\
\mathcal{I}_{m^*+1} &=& \left[ -\frac{b}{\sigma} - \frac{1}{(\log (1/\sigma))^{\mathcal{S}_0^{m^*+1}(2-q)}}, -\frac{b}{\sigma} + \frac{1}{(\log (1/\sigma))^{\mathcal{S}_0^{m^*+1}(2-q)}} \right],  \nonumber \\
\mathcal{I}_{m^*+2} &=& \left[ \frac{-b}{\sigma} -\alpha , -\frac{b}{\sigma} +\alpha \right]. 
\end{eqnarray}
We see that for small enough $\sigma$, these intervals are nested: $\mathcal{I}_{-1}\subset \mathcal{I}_0\subset \mathcal{I}_1\subset \ldots \subset \mathcal{I}_{m^*+2}$. Further define 
\begin{eqnarray*}
P_{-1} &=& \sigma^{q-2} \int_0^\infty \int_{\mathcal{I}_{-1}} \frac{1}{|\eta_q(b/\sigma+z ; \chi)|^{2-q} + \chi q(q-1)} \phi(z)dz dF(b), \nonumber \\
P_i &=& \sigma^{q-2} \int_0^\infty \int_{\mathcal{I}_i\backslash \mathcal{I}_{i-1}} \frac{1}{|\eta_q(b/\sigma+z ; \chi)|^{2-q} + \chi q(q-1)} \phi(z)dz dF(b), \quad 0 \leq i \leq m^*+2.
\end{eqnarray*}
Using these notations we have
\begin{eqnarray}\label{eq:allpiecesfirstlemmaeps1qless2}
\sigma^{q-2}\int_0^\infty \int_{\frac{-b}{\sigma} - \alpha}^{\frac{-b}{\sigma} + \alpha} \frac{1}{|\eta_q(b/\sigma+z ; \chi)|^{2-q} + \chi q(q-1)} \phi(z)dz dF(b)=\sum_{i=-1}^{m^*+2}P_i.
\end{eqnarray}
Our goal is to show that $P_i \rightarrow 0$ as $\sigma \rightarrow 0$. Since these intervals have different forms, we consider five different cases (i) $i=-1$, (ii) $i=0$, (iii) $1 \leq i \leq m^*$, (iv) $i=m^*+1$, and (v) $i=m^*+2$ and for each case we show that $P_i \rightarrow 0$.  Let $|\mathcal{I}|$ denote the Lebesgue measure of an interval $\mathcal{I}$. For the first term, we have for a positive constant $\tilde{C}_{-1}$,
\begin{eqnarray}\label{eq:integraloverI0first}
&&\hspace{-0.5cm}P_{-1} \leq  \sigma^{q-2}  \int_0^\infty \int_{\mathcal{I}_{-1}} \frac{1}{\chi q(q-1)} \phi(z)dz dF(b)  \leq \sigma^{q-2}  \int_0^{ \tilde{C}_{-1} \sigma \sqrt{\log(1/\sigma)}  } \int_{\mathcal{I}_{-1}} \frac{1}{\chi q(q-1)} \phi(z)dz dF(b) \nonumber \\
&&\hspace{0.4cm}+ \sigma^{q-2}  \int_{\tilde{C}_{-1} \sigma \sqrt{\log(1/\sigma)}  }^\infty \int_{\mathcal{I}_{-1}} \frac{1}{\chi q(q-1)} \phi(z)dz dF(b) \nonumber \\
&&\hspace{-0.5cm} \leq  \frac{\sigma^{q-2} \phi(0) |\mathcal{I}_{-1}| \mathbb{P} (|B| \leq  \tilde{C}_{-1} \sigma \sqrt{\log(1/\sigma)}   )}{\chi q(q-1)} +  \frac{\sigma^{q-2} \phi(\tilde{C}_{-1}\sqrt{\log(1/\sigma)}  - \frac{\sigma^{q-\epsilon}}{\log(1/\sigma)}) |\mathcal{I}_{-1}|}{\chi q(q-1)} \nonumber \\
&& \hspace{-0.5cm} \leq O(1) \frac{\sigma^{q-\epsilon-2}\mathbb{P} (|B| \leq  \tilde{C}_{-1} \sigma \sqrt{\log(1/\sigma)}   )}{\log(1/\sigma)}+  O(1) \frac{\sigma^{q-\epsilon-2}\phi( \frac{\tilde{C}_{-1}}{2}\sqrt{\log(1/\sigma)})}{\log(1/\sigma)}  \nonumber \\
&& \hspace{-0.5cm} \leq O(1) (\log(1/\sigma))^{\frac{-q+\epsilon}{2}} + O(1) \frac{\sigma^{q-\epsilon-2+\tilde{C}^2_{-1}/8}}{\log(1/\sigma)} \rightarrow 0,
\end{eqnarray}
where we have used the condition $\mathbb{P}(|B|<t)=O(t^{2-q+\epsilon})$ to obtain the last inequality and the last statement holds by choosing $\tilde{C}_{-1}$ large enough. We next analyze the term $P_0$. For a constant $\tilde{C}_{0}>0$ we have
\begin{eqnarray} \label{refereq:p0}
P_0  & \leq& \sigma^{q-2} \int_0^\infty \int_{\mathcal{I}_0 \backslash \mathcal{I}_{-1}} \frac{1}{|\eta_q(b/\sigma+z ; \chi)|^{2-q} } \phi(z)dz dF(b)\nonumber \\
&=&  \sigma^{q-2}  \int_0^{ \tilde{C}_0 \sigma \sqrt{\log(1/\sigma)}  } \int_{\mathcal{I}_0 \backslash \mathcal{I}_{-1}} \frac{1}{|\eta_q(b/\sigma+z ; \chi)|^{2-q} } \phi(z)dz dF(b) \nonumber \\
&&+ \sigma^{q-2}  \int_{ \tilde{C}_0 \sigma \sqrt{\log(1/\sigma)}  }^{\infty} \int_{\mathcal{I}_0 \backslash \mathcal{I}_{-1}} \frac{1}{|\eta_q(b/\sigma+z ; \chi)|^{2-q} } \phi(z)dz dF(b)  \nonumber \\
&&\hspace{-1.4cm} \leq \frac{\sigma^{q-2}\phi(0) |\mathcal{I}_0| \mathbb{P} (|B| < \tilde{C}_0 \sigma \sqrt{\log(1/\sigma)}  ) }{\eta_q^{2-q} \big( \frac{\sigma^{q-\epsilon}}{\log(1/\sigma) }  ; \chi \big)} + \frac{\sigma^{q-2}\phi \Big( \tilde{C}_0  \sqrt{\log(1/\sigma)}  - \frac{\sigma}{\log(1/\sigma) } \Big) |\mathcal{I}_0|  }{\eta_q^{2-q} \big( \frac{\sigma^{q-\epsilon}}{\log(1/\sigma) }  ; \chi \big)}. 
\end{eqnarray}
We have used the fact that $|b/\sigma+z|\geq \frac{\sigma^{q-\epsilon}}{\log(1/\sigma) }$ for $z\notin \mathcal{I}_{-1}$ in the last step. Note that according to Lemma \ref{lem:asymptoticbehavior}, since $(\frac{\sigma^{q-\epsilon}}{\log(1/\sigma)})^{q-2}\chi \propto \sigma^{q^2-(1+\epsilon)q+2\epsilon}(\log(1/\sigma))^{2-q}\rightarrow 0$, we obtain
\begin{eqnarray*}
\lim_{\sigma\rightarrow 0} \frac{\frac{\sigma^{q-\epsilon}}{\log(1/\sigma)}}{\eta_q(\frac{\sigma^{q-\epsilon}}{\log(1/\sigma)};\chi)}=1.
\end{eqnarray*}
With the above result, it is clear that the second term of the upper bound in \eqref{refereq:p0} vanishes if choosing sufficiently large $\tilde{C}_0$. Regarding the first term we know
\begin{eqnarray*}
\frac{\sigma^{q-2} |\mathcal{I}_0| \mathbb{P} (|B| < \tilde{C}_0 \sigma \sqrt{\log(1/\sigma)}  ) }{\eta_q^{2-q} \big( \frac{\sigma^{q-\epsilon}}{\log(1/\sigma) }  ; \chi \big)} \propto \sigma^{q^2-(\epsilon+2)q+3\epsilon+1}(\log(1/\sigma))^{\frac{\epsilon+4-3q}{2}}=o(1).
\end{eqnarray*}
Now we consider an arbitrary $1\leq i \leq m^*$ and show that $P_i \rightarrow 0$. Similarly as bounding $P_0$ we can have
\begin{eqnarray}\label{eq:I_iintervalcomponents}
P_i &\leq&  \sigma^{q-2}  \int_0^{ \tilde{C}_i \sigma \sqrt{\log(1/\sigma)}  } \int_{\mathcal{I}_i\backslash \mathcal{I}_{i-1}} \frac{1}{|\eta_q(b/\sigma+z ; \chi)|^{2-q} } \phi(z)dz dF(b) \nonumber \\
&&+ \sigma^{q-2}  \int_{ \tilde{C}_i \sigma \sqrt{\log(1/\sigma)}  }^{\infty} \int_{\mathcal{I}_i\backslash \mathcal{I}_{i-1}} \frac{1}{|\eta_q(b/\sigma+z ; \chi)|^{2-q} } \phi(z)dz dF(b)  \nonumber \\
&&\hspace{-1.4cm} \leq \frac{\sigma^{q-2}\phi(0) |\mathcal{I}_i| \mathbb{P} (|B| < \tilde{C}_i \sigma \sqrt{\log(1/\sigma)}  ) }{\eta_q^{2-q} \left( \frac{\sigma^{\frac{\epsilon+q-1}{q-1}(2-q)^{i-1}-\frac{\epsilon}{q-1}}}{(\log (1/\sigma))^{\mathcal{S}_0^{i-1}(2-q)}}  ; \chi \right)} + \frac{\sigma^{q-2}\phi \Big( \tilde{C}_i  \sqrt{\log(1/\sigma)}-\frac{\sigma^{\frac{\epsilon+q-1}{q-1}(2-q)^i-\frac{\epsilon}{q-1}}}{(\log (1/\sigma))^{\mathcal{S}_0^i(2-q)}} \Big) |\mathcal{I}_i|  }{\eta_q^{2-q} \left( \frac{\sigma^{\frac{\epsilon+q-1}{q-1}(2-q)^{i-1}-\frac{\epsilon}{q-1}}}{(\log (1/\sigma))^{\mathcal{S}_0^{i-1}(2-q)}}  ; \chi \right)}. 
\end{eqnarray}
We then use Lemma \ref{lem:asymptoticbehavior} to conclude for $i \geq 1$
 \begin{eqnarray}\label{eq:I_iintervalcomponent2}
\lim_{\sigma \rightarrow 0} \frac{ \frac{\sigma^{\frac{\epsilon+q-1}{q-1}(2-q)^{i}-\frac{\epsilon(2-q)}{q-1}}}{(\log (1/\sigma))^{\mathcal{S}_1^{i}(2-q)}} }{\eta_q^{2-q} \left(  \frac{\sigma^{\frac{\epsilon+q-1}{q-1}(2-q)^{i-1}-\frac{\epsilon}{q-1}}}{(\log (1/\sigma))^{\mathcal{S}_0^{i-1}(2-q)}}  ; \chi \right)} =1.
\end{eqnarray}
The condition of Lemma \ref{lem:asymptoticbehavior} can be verified in the following:
\begin{eqnarray*}
 (\sigma^{\frac{\epsilon+q-1}{q-1}(2-q)^{i-1}-\frac{\epsilon}{q-1}} (\log (1/\sigma))^{-\mathcal{S}_0^{i-1}(2-q)})^{q-2}\chi  \propto \sigma^{-\frac{\epsilon+q-1}{q-1}(2-q)^i+\frac{2-q}{q-1}\epsilon+q}(\log(1/\sigma))^{\mathcal{S}_1^i(2-q)} =o(1),
\end{eqnarray*}
where the last step is due to the fact that $-\frac{\epsilon+q-1}{q-1}(2-q)^i+\frac{2-q}{q-1}\epsilon+q\geq -\frac{\epsilon+q-1}{q-1}(2-q)+\frac{2-q}{q-1}\epsilon+q=2q-2>0$.
Using the result \eqref{eq:I_iintervalcomponent2}, it is straightforward to confirm that if $\tilde{C}_i$ is chosen large enough, the second term in \eqref{eq:I_iintervalcomponents} goes to zero. For the first term,
\begin{eqnarray*}\label{eq:I_iintervalcomponent1}
 \lefteqn{\lim_{\sigma \rightarrow 0}  \frac{\sigma^{q-2}\phi(0) |\mathcal{I}_i| \mathbb{P} (|B| < \tilde{C}_i \sigma \sqrt{\log(1/\sigma)}  ) }{\eta_q^{2-q} \left( \frac{\sigma^{\frac{\epsilon+q-1}{q-1}(2-q)^{i-1}-\frac{\epsilon}{q-1}}}{(\log (1/\sigma))^{\mathcal{S}_0^{i-1}(2-q)}}  ; \chi \right)}} \nonumber \\
 &\overset{(a)}{=}& O(1) \cdot \lim_{\sigma \rightarrow 0} \frac{\sigma^{q-2} \frac{\sigma^{\frac{\epsilon+q-1}{q-1}(2-q)^{i}-\frac{\epsilon}{q-1}}}{(\log (1/\sigma))^{\mathcal{S}_0^{i}(2-q)}}   \sigma^{2-q+\epsilon}(\log(1/\sigma))^{\frac{2-q+\epsilon}{2}} }{\frac{\sigma^{\frac{\epsilon+q-1}{q-1}(2-q)^{i}-\frac{\epsilon(2-q)}{q-1}}}{(\log (1/\sigma))^{\mathcal{S}_1^{i}(2-q)}}  } =  O(1) \cdot \lim_{\sigma\rightarrow 0}(\log(1/\sigma))^{\frac{-q+\epsilon}{2}}=0,
 \end{eqnarray*}
where we have used \eqref{eq:I_iintervalcomponent2} to obtain $(a)$. So far we have showed $\lim_{\sigma \rightarrow 0} \sum_{i=-1}^{m^*} P_i =0$. Our next step is to prove that $P_{m^*+1} \rightarrow 0$.
\begin{eqnarray}\label{eq:intergraloverImstar1}
 P_{m^*+1} &\leq& \sigma^{q-2}  \int_0^{ \tilde{C}_{m^*+1} \sigma \sqrt{\log(1/\sigma)}  } \int_{\mathcal{I}_{m^*+1}\backslash \mathcal{I}_{m^*}} \frac{\phi(z)}{|\eta_q(b/\sigma+z ; \chi)|^{2-q} } dz dF(b) \nonumber \\
&&+ \sigma^{q-2}  \int_{ \tilde{C}_{m^*+1} \sigma \sqrt{\log(1/\sigma)}  }^{\infty} \int_{\mathcal{I}_{m^*+1}\backslash \mathcal{I}_{m^*}} \frac{\phi(z)}{|\eta_q(b/\sigma+z ; \chi)|^{2-q} } dz dF(b)  \nonumber \\
&\leq& \frac{\sigma^{q-2}\phi(0) |\mathcal{I}_{m^*+1}| \mathbb{P} (|B| < \tilde{C}_{m^*+1} \sigma \sqrt{\log(1/\sigma)}  ) }{\eta_q^{2-q} \left( \frac{\sigma^{\frac{\epsilon+q-1}{q-1}(2-q)^{m^*}-\frac{\epsilon}{q-1}}}{(\log (1/\sigma))^{\mathcal{S}_0^{m^*}(2-q)}} ; \chi \right)} \nonumber\\
&&+ \frac{\sigma^{q-2}\phi \left( \tilde{C}_{m^*+1}  \sqrt{\log(1/\sigma)}  - \frac{1}{\log(1/\sigma)^{\mathcal{S}_0^{m^*+1}(2-q)}  } \right) |\mathcal{I}_{m^*+1}|  }{\eta_q^{2-q} \left( \frac{\sigma^{\frac{\epsilon+q-1}{q-1}(2-q)^{m^*}-\frac{\epsilon}{q-1}}}{(\log (1/\sigma))^{\mathcal{S}_0^{m^*}(2-q)}} ; \chi \right)}. 
\end{eqnarray}
Again based on \eqref{eq:I_iintervalcomponent2} It is clear that if $\tilde{C}_{m^*+1}$ is large enough, then the second term in \eqref{eq:intergraloverImstar1} goes to zero. We now show the first term goes to zero as well:
\begin{eqnarray*}\label{eq:intergraloverImstar2}
\lefteqn{ \lim_{\sigma \rightarrow 0} \frac{\sigma^{q-2}\phi(0) |\mathcal{I}_{m^*+1}| \mathbb{P} (|B| < \tilde{C}_{m^*+1} \sigma \sqrt{\log(1/\sigma)}  ) }{\eta_q^{2-q} \left( \frac{\sigma^{\frac{\epsilon+q-1}{q-1}(2-q)^{m^*}-\frac{\epsilon}{q-1}}}{(\log (1/\sigma))^{\mathcal{S}_0^{m^*}(2-q)}} ; \chi \right)}} \nonumber \\
 &&\hspace{-1cm} \overset{(b)}{=} O(1) \cdot \lim_{\sigma \rightarrow 0} \frac{\sigma^{q-2}  \frac{1}{\log(1/\sigma)^{\mathcal{S}_0^{m^*+1}(2-q)} }  \sigma^{2-q+\epsilon} (\log(1/\sigma))^{\frac{2-q+\epsilon}{2}}}{ \frac{\sigma^{\frac{\epsilon+q-1}{q-1}(2-q)^{m^*+1}-\frac{\epsilon(2-q)}{q-1}}}{(\log (1/\sigma))^{\mathcal{S}_1^{m^*+1}(2-q)}} }  \\
 &&=O(1)\cdot  \lim_{\sigma \rightarrow 0} \sigma^{\frac{\epsilon-(\epsilon+q-1)(2-q)^{m^*+1}}{q-1}}(\log(1/\sigma))^{\frac{-q+\epsilon}{2}} \overset{(c)}{=} 0,
 \end{eqnarray*}
where $(b)$ holds from  Lemma \ref{lem:asymptoticbehavior} and $(c)$ is due to the condition we imposed on $m^*$ that ensures $(2-q)^{m^*+1}\leq \frac{\epsilon}{\epsilon+q-1}$. The last remaining term of \eqref{eq:allpiecesfirstlemmaeps1qless2} is $P_{m^*+2}$. To prove $P_{m^*+2} \rightarrow 0$, we have
 \begin{eqnarray*}\label{eq:intergraloverImstarplus1first}
 P_{m^*+2} &\leq & \sigma^{q-2} \int_0^{\tilde{C}_{m^*+2} \sigma \sqrt{\log(1/\sigma)}} \int_{\mathcal{I}_{m^*+2}\backslash \mathcal{I}_{m^*+1}} \frac{1}{|\eta_q(b/\sigma+z ; \chi)|^{2-q} } \phi(z)dz dF(b) \nonumber \\
&&+ \sigma^{q-2} \int_{\tilde{C}_{m^*+2} \sigma \sqrt{\log(1/\sigma)}}^\infty \int_{\mathcal{I}_{m^*+2}\backslash \mathcal{I}_{m^*+1}} \frac{1}{|\eta_q(b/\sigma+z ; \chi)|^{2-q} } \phi(z)dz dF(b).
\end{eqnarray*}
By using the same strategy as we did for bounding $P_i~(0\leq i \leq m^*+1)$, the second integral above will go to zero as $\sigma \rightarrow 0$, when $\tilde{C}_{m^*+2}$ is chosen large enough. And the first integral can be bounded by
\begin{eqnarray*}\label{eq:intergraloverImstarplus1second}
 \frac{ \sigma^{q-2} \phi(0)2 \alpha \mathbb{P} (|B| \leq \tilde{C}_{m^*+2} \sigma \sqrt{\log(1/\sigma)} ) }{\eta_q^{2-q} \left( \frac{1}{\log(1/\sigma)^{\mathcal{S}_0^{m^*+1}(2-q)} }  ; \chi \right )} \overset{(d)}{=} O(1) \sigma^{\epsilon} \log(1/\sigma)^{(2-q+\epsilon)/2+\mathcal{S}_1^{m^*+2}(2-q)}  \rightarrow 0,
\end{eqnarray*}
where $(d)$ holds by Lemma \ref{lem:asymptoticbehavior} and the condition of Lemma \ref{lem:asymptoticbehavior} can be easily checked. This completes the proof.
\end{proof}

Define 
\begin{equation}\label{eq:igammadef}
\mathcal{I}^\gamma \triangleq \left [-\frac{b}{\sigma} - \frac{\alpha}{\sigma^{1- \gamma}}, -\frac{b}{\sigma} + \frac{\alpha}{\sigma^{1- \gamma}} \right ].
\end{equation}
 In Lemma \ref{lem:firstdivisionconstantinterval} we proved that:
\[
\sigma^{q-2} \int_0^\infty \int_{\frac{-b}{\sigma} - \alpha}^{\frac{-b}{\sigma} + \alpha} \frac{1}{|\eta_q(b/\sigma+z ; \chi)|^{2-q} + \chi q(q-1)} \phi(z)dz dF(b) \rightarrow 0.
\]
In the next lemma, we would like to extend this result and show that in fact,
\[
\sigma^{q-2} \int_0^\infty \int_{\mathcal{I}^\gamma} \frac{1}{|\eta_q(b/\sigma+z ; \chi)|^{2-q} + \chi q(q-1)} \phi(z)dz dF(b) \rightarrow 0.
\]

\begin{lemma}\label{lem:limitzerocloseto1power}
For any given $q\in (1,2)$, suppose the conditions in Lemma \ref{lem:firstdivisionconstantinterval} hold. Then for any fixed $0<\gamma<1$,
\[
\sigma^{q-2} \int_0^\infty \int_{\mathcal{I}^\gamma \backslash \mathcal{I}_{m^*+2}} \frac{1}{|\eta_q(b/\sigma+z ; \chi)|^{2-q} + \chi q(q-1)} \phi(z)dz dF(b) \rightarrow 0,
\]
as $\sigma \rightarrow 0$. Note that $\mathcal{I}_{m^*+2}$ is defined in \eqref{eq:intervaldefsI0_I_i}.
\end{lemma}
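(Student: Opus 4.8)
The plan is to reduce the statement to a pointwise estimate on one integral over the annulus, and then to split the outer $b$-integral into three ranges of $|B|$. Observe first that on $\mathcal{I}^\gamma\setminus\mathcal{I}_{m^*+2}$ the argument $u:=b/\sigma+z$ of $\eta_q$ satisfies $|u|\ge\alpha$. Since $\chi=C\sigma^q\to0$, the optimality identity in Lemma~\ref{prox:prop}(ii) together with $0\le\eta_q(|u|;\chi)\le|u|$ gives $\eta_q(|u|;\chi)\ge|u|\bigl(1-\chi q|u|^{q-2}\bigr)\ge|u|/2$ for all small $\sigma$, uniformly over $|u|\ge\alpha$; hence
\[
\frac{1}{|\eta_q(b/\sigma+z;\chi)|^{2-q}+\chi q(q-1)}\le\frac{2^{2-q}}{|b/\sigma+z|^{2-q}} .
\]
So it suffices to show $\sigma^{q-2}\int_0^\infty\int_{\mathcal{I}^\gamma\setminus\mathcal{I}_{m^*+2}}|b/\sigma+z|^{q-2}\phi(z)\,dz\,dF(b)\to0$. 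After the change of variables $u=b/\sigma+z$ the inner integral becomes $\int_{\alpha\le|u|\le\alpha/\sigma^{1-\gamma}}|u|^{q-2}\phi(u-b/\sigma)\,du$, which I will bound as a function of $b$ and then integrate.

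I would split $\int_0^\infty\,dF(b)$ over $\{b<2\alpha\sigma\}$, $\{2\alpha\sigma\le b\le2\alpha\sigma^\gamma\}$ and $\{b>2\alpha\sigma^\gamma\}$. On the first set, bound the inner integral crudely by $\alpha^{q-2}$ (using $|u|\ge\alpha$ and $\int\phi\le1$); the hypothesis $\mathbb{P}(|B|<t)=O(t^{2-q+\epsilon})$ then makes this range contribute at most $\sigma^{q-2}\cdot\alpha^{q-2}\cdot O(\sigma^{2-q+\epsilon})=O(\sigma^{\epsilon})$. On the third set, $b/\sigma>2\alpha/\sigma^{1-\gamma}$ forces $z<-\alpha/\sigma^{1-\gamma}$ throughout the annulus, so $\phi(z)\le\phi(\alpha/\sigma^{1-\gamma})$; since the annulus has $z$-length at most $2\alpha/\sigma^{1-\gamma}$ and $|u|^{q-2}\le\alpha^{q-2}$, the inner integral is at most $2\alpha^{q-1}\sigma^{\gamma-1}\phi(\alpha/\sigma^{1-\gamma})$, which is super-exponentially small in $\sigma$, so this contribution vanishes even after multiplication by $\sigma^{q-2}$.

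The main case is $2\alpha\sigma\le b\le2\alpha\sigma^\gamma$, where I claim the uniform bound, valid for every $b\ge2\alpha\sigma$,
\[
\int_{|u|\ge\alpha}|u|^{q-2}\phi(u-b/\sigma)\,du\le K\Bigl(\tfrac{\sigma}{b}\Bigr)^{2-q},\qquad K=K(q,\alpha).
\]
For $u>0$, split at $u=b/(2\sigma)\,(\ge\alpha)$: on $[\alpha,b/(2\sigma)]$ one has $\phi(u-b/\sigma)\le\phi(b/(2\sigma))$, and $\phi(b/(2\sigma))(b/(2\sigma))^{q-1}\lesssim(\sigma/b)^{2-q}$ because $\sup_{x\ge0}xe^{-x^2/2}<\infty$; on $[b/(2\sigma),\infty)$ one has $|u|^{q-2}\le(b/(2\sigma))^{q-2}$ and $\int\phi\le1$. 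For $u<0$, substituting $v=-u$ and using $(v+b/\sigma)^2\ge v^2+(b/\sigma)^2$ gives $\phi(v+b/\sigma)\le\phi(v)e^{-(b/\sigma)^2/2}$, where $e^{-(b/\sigma)^2/2}\lesssim(\sigma/b)^{2-q}$ (as $\sup_{x\ge0}x^{2-q}e^{-x^2/2}<\infty$) and $\int_\alpha^\infty v^{q-2}\phi(v)\,dv<\infty$. Since $\mathcal{I}^\gamma\setminus\mathcal{I}_{m^*+2}\subseteq\{|u|\ge\alpha\}$ in $u$-coordinates, this bounds the inner integral, and therefore
\[
\sigma^{q-2}\int_{2\alpha\sigma\le b\le2\alpha\sigma^\gamma}K(\sigma/b)^{2-q}\,dF(b)=K\int_{2\alpha\sigma}^{2\alpha\sigma^\gamma}b^{q-2}\,dF(b)=O(\sigma^{\gamma\epsilon}),
\]
the last equality by integration by parts, the boundary terms and the remaining $\int b^{q-3}F(b)\,db$ all being $O\bigl((\sigma^{\gamma})^{\epsilon}\bigr)$ in view of $F(t)=O(t^{2-q+\epsilon})$. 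Summing the three contributions gives the claim.

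The main obstacle is this middle-range estimate. A naive bound of the integrand by $O(1)$ only leaves $\sigma^{q-2}$ times the annulus probability, which is $O\bigl(\sigma^{q-2+\gamma(2-q+\epsilon)}\bigr)$ and need not vanish when $\gamma$ is small; one has to keep the decay $|u|^{q-2}$ of the kernel and play it against the Gaussian concentration of $\phi(\cdot-b/\sigma)$ near $u=b/\sigma$ to extract the sharp factor $(\sigma/b)^{2-q}$. Equally crucial is that the same pointwise bound must \emph{not} be used for $b\gtrsim\sigma^{\gamma}$: integrating $(\sigma/b)^{2-q}$ against $dF(b)$ there would only give $O(\sigma^{2-q})$, not $o(\sigma^{2-q})$, so the upper truncation at $\alpha/\sigma^{1-\gamma}$ has to be invoked to make the large-$b$ contribution super-exponentially small. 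Keeping every estimate uniform in $b$ while threading these three regimes is where the care is needed.
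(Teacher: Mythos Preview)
Your proof is correct and takes a genuinely different, more direct route than the paper.

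The paper iterates the strategy of Lemma~\ref{lem:firstdivisionconstantinterval}: it introduces a further chain of nested intervals $\mathcal{J}_i=[-b/\sigma-\alpha\sigma^{-\frac{\epsilon}{1+\theta}\mathcal{S}_0^i(1-\epsilon)},\,-b/\sigma+\alpha\sigma^{-\frac{\epsilon}{1+\theta}\mathcal{S}_0^i(1-\epsilon)}]$ interpolating between $\mathcal{I}_{m^*+2}$ and $\mathcal{I}^\gamma$, bounds the integrand on each annulus $\mathcal{J}_{i+1}\setminus\mathcal{J}_i$ by its value at the inner radius using Lemma~\ref{lem:asymptoticbehavior}, and splits the $b$-integral at $C\sigma\tilde{\sigma}_i^{-1}\log(1/\sigma)$; the exponents are chosen so that each piece is $o(1)$ and finitely many shells suffice to reach any $\gamma<1$. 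You instead observe once that on the whole annulus $|u|\ge\alpha$ forces $\eta_q(|u|;\chi)\ge|u|/2$ uniformly, reduce to $\sigma^{q-2}\int\int|u|^{q-2}\phi(u-b/\sigma)\,du\,dF(b)$, and then prove the sharp pointwise bound $\int_{|u|\ge\alpha}|u|^{q-2}\phi(u-b/\sigma)\,du\lesssim(\sigma/b)^{2-q}$ for $b\ge2\alpha\sigma$ by exploiting the Gaussian concentration of $\phi(\cdot-b/\sigma)$ near $u=b/\sigma$ where the kernel $|u|^{q-2}$ is small. What this buys you is a clean three-range split in $b$ and a final estimate $\int b^{q-2}\,dF(b)=O(\sigma^{\gamma\epsilon})$ via a single integration by parts against the tail hypothesis $F(t)=O(t^{2-q+\epsilon})$, with no exponent bookkeeping across a chain of shells. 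The paper's chaining, on the other hand, is reused verbatim later in Lemma~\ref{lq:general:final:rate:lemma} where $\chi^*$ is only known to lie between $\Omega(\sigma^q)$ and $O(\sigma^{q-1})$; your simplification relies on $\chi=C\sigma^q$ exactly to get the uniform bound $\eta_q(|u|;\chi)\ge|u|/2$ on $|u|\ge\alpha$, so it covers the present lemma cleanly but would need the paper's machinery (or a variant) for that subsequent step.
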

\begin{proof}
As in the proof of Lemma \ref{lem:firstdivisionconstantinterval}, we break the integral into smaller subintervals and prove each one goes to zero. Consider the following intervals:
\[
\mathcal{J}_i= \left [-\frac{b}{\sigma} - \frac{\alpha}{\sigma^{\frac{\epsilon}{1+\theta} \mathcal{S}_0^{i}(1-\epsilon)}},  -\frac{b}{\sigma} + \frac{\alpha}{\sigma^{\frac{\epsilon}{1+\theta} \mathcal{S}_0^{i}(1-\epsilon)}} \right ], 
\]
where $\theta>0$ is an arbitrarily small number and $i$ is an arbitrary natural number. Note that $\{\mathcal{J}_i\}$ is a sequence of nested intervals and $\mathcal{I}_{m^*+2}\subset \mathcal{J}_0$. Our goal is to show that the following integrals go to zero as $\sigma \rightarrow 0$:
\begin{eqnarray}
Q_{-1}&\triangleq&\sigma^{q-2} \int_0^\infty \int_{\mathcal{J}_0 \backslash \mathcal{I}_{m^*+2}} \frac{1}{|\eta_q(b/\sigma+z ; \chi)|^{2-q} + \chi q(q-1)} \phi(z)dz dF(b) \rightarrow 0, \nonumber \\
Q_i &\triangleq& \sigma^{q-2} \int_0^\infty \int_{\mathcal{J}_{i+1} \backslash \mathcal{J}_i} \frac{1}{|\eta_q(b/\sigma+z ; \chi)|^{2-q} + \chi q(q-1)} \phi(z)dz dF(b) \rightarrow 0, \quad i \geq 0. \nonumber 
\end{eqnarray}
Define $\tilde{\sigma}_i \triangleq \frac{1}{\alpha}\sigma^{\frac{\epsilon}{1+\theta} \mathcal{S}_0^i(1-\epsilon)}$. Since $|b/\sigma+z|\geq \alpha$ for $z \notin \mathcal{I}_{m^*+2}$ we obtain
\begin{eqnarray}
Q_{-1}  &\leq&  \sigma^{q-2} \int_0^\infty \int_{\mathcal{J}_0 \backslash \mathcal{I}_{m^*+2}} \frac{1}{|\eta_q(\alpha ; \chi)|^{2-q} } \phi(z)dz dF(b) \nonumber \\
&{=}&  \sigma^{q-2} \int_0^{\frac{\sigma}{\tilde{\sigma}_0} \log(1/\sigma) } \int_{\mathcal{J}_0 \backslash \mathcal{I}_{m^*+2}} \frac{1}{|\eta_q(\alpha ; \chi)|^{2-q} } \phi(z)dz dF(b) \nonumber \\
&&+ \sigma^{q-2} \int_{\frac{\sigma}{\tilde{\sigma}_0} \log(1/\sigma) }^{\infty}  \int_{\mathcal{J}_0 \backslash \mathcal{I}_{m^*+2}} \frac{1}{|\eta_q(\alpha ; \chi)|^{2-q} } \phi(z)dz dF(b)  \nonumber \\
&\leq&\sigma^{q-2} \int_0^{\frac{\sigma}{\tilde{\sigma}_0} \log(1/\sigma) }  \frac{1}{|\eta_q(\alpha ; \chi)|^{2-q} } dF(b) + \sigma^{q-2}  \frac{\phi(\frac{\log(1/\sigma)}{\tilde{\sigma}_0  } - {\frac{1}{\tilde{\sigma}_0}}) |\mathcal{J}_0|}{|\eta_q(\alpha ; \chi)|^{2-q} }. \nonumber
\end{eqnarray}
It is straightforward to notice that the second term above converges to zero. For the first term, by the condition $\mathbb{P}(|B|<t)=O(t^{2-q+\epsilon})$ we derive the following bounds 
\begin{eqnarray*}
&&\sigma^{q-2} \int_0^{\frac{\sigma}{\tilde{\sigma}_0} \log(1/\sigma) }  \frac{1}{|\eta_q(\alpha ; \chi)|^{2-q} } dF(b) \leq O(1) \sigma^{q-2} (\sigma \tilde{\sigma}_0^{-1} \log(1/\sigma))^{2-q+\epsilon} \\
&& = O(1)   \sigma^{\frac{\epsilon(q-1-\epsilon+\theta)}{1+\theta}} (\log(1/\sigma))^{2-q+\epsilon} \rightarrow 0. \nonumber
\end{eqnarray*}
Now we discuss the term $Q_i$ for $i \geq 0$. Similarly as we bounded $Q_{-1}$ we have
\begin{eqnarray}
 Q_i  &\leq& \sigma^{q-2} \int_0^{\frac{\sigma}{\tilde{\sigma}_{i+1}} \log(1/\sigma)} \int_{\mathcal{J}_{i+1} \backslash \mathcal{J}_i} \frac{1}{|\eta_q(\frac{1}{\tilde{\sigma}_i }; \chi)|^{2-q} } \phi(z)dz dF(b) \nonumber \\
 &&+  \sigma^{q-2} \int_{\frac{\sigma}{\tilde{\sigma}_{i+1}} \log(1/\sigma)}^{\infty} \int_{\mathcal{J}_{i+1} \backslash \mathcal{J}_i} \frac{1}{|\eta_q(\frac{1}{\tilde{\sigma}_i} ; \chi)|^{2-q} } \phi(z)dz dF(b). \label{only:one}
 \end{eqnarray}
The second integral in \eqref{only:one} can be easily shown convergent to zero as $\sigma \rightarrow 0$. We now focus on the first integral.
\begin{eqnarray*}
\lefteqn{\sigma^{q-2} \int_0^{\frac{\sigma}{\tilde{\sigma}_{i+1}} \log(1/\sigma)} \int_{\mathcal{J}_{i+1} \backslash \mathcal{J}_i} \frac{1}{|\eta_q(\frac{1}{\tilde{\sigma}_i }; \chi)|^{2-q} } \phi(z)dz dF(b)} \nonumber \\ 
&\leq& \frac{\sigma^{q-2}}{|\eta_q(\frac{1}{\tilde{\sigma}_i }; \chi)|^{2-q}  } \mathbb{P} \big(|B| \leq \frac{\sigma}{\tilde{\sigma}_{i+1}} \log(1/\sigma) \big)  \nonumber \\
&\leq& O(1) \frac{\sigma^{\epsilon} (\log(1/\sigma))^{2-q+\epsilon}}{|\eta_q(\frac{1}{\tilde{\sigma}_i }; \chi)|^{2-q}  \tilde{\sigma}_{i+1}^{2-q+\epsilon}}  \overset{(a)}{=} O(1) \frac{\sigma^{\epsilon} (\log(1/\sigma))^{2-q+\epsilon} \tilde{\sigma}_i^{2-q}}{  \tilde{\sigma}_{i+1}^{2-q+\epsilon}}  \nonumber \\
&=&O(1) \sigma^{\frac{\epsilon(\theta+(q-1-\epsilon)(1-\epsilon)^{i+1})}{1+\theta}} (\log(1/\sigma))^{2-q+\epsilon} =o(1).
\end{eqnarray*}
We have used Lemma \ref{lem:asymptoticbehavior} to obtain $(a)$. Above all we have showed that for any given natural number $i \geq 0$,
\[
\lim_{\sigma \rightarrow 0} \sigma^{q-2} \int_0^\infty \int_{\mathcal{J}_i \backslash \mathcal{I}_{m^*+2}} \frac{1}{|\eta_q(b/\sigma+z ; \chi)|^{2-q} + \chi q(q-1)} \phi(z)dz dF(b)=0.
\]
Now note that as $i$ goes to infinity, the exponent of $\sigma$ in the interval $\mathcal{J}_i$ goes to $\frac{\epsilon}{1+\theta} (1+ (1-\epsilon) + (1-\epsilon)^2+ \ldots) = \frac{1}{1+ \theta}$. So, by choosing small enough $\theta$ and sufficiently large $i$ we can make $\mathcal{I}^{\gamma} \subset \mathcal{J}_i$, hence completing the proof.
\end{proof}

In the last two lemmas, we have been able to prove that for $\chi= C \sigma^q$,
\[
\sigma^{q-2} \int_0^\infty \int_{\mathcal{I}^\gamma} \frac{1}{|\eta_q(b/\sigma+z ; \chi)|^{2-q} + \chi q(q-1)} \phi(z)dz dF(b) \rightarrow 0. 
\]
This result will be used to characterize the following limit 
\[
\lim_{\sigma \rightarrow 0}\sigma^{q-2} \int_0^\infty \int_{\mathbb{R}} \frac{1}{|\eta_q(b/\sigma+z ; \chi)|^{2-q} + \chi q(q-1)} \phi(z)dz dF(b).
\]
Before that we mention a simple lemma that will be applied several times in our proofs.  

\begin{lemma}\label{lem:funcupperbound1}
For $1< q<2$ we have
\[
\frac{1}{|\eta_q (u ; \chi)|^{2-q} + \chi q(q-1)} \leq \frac{2}{|u|^{2-q} (q-1)}. 
\]
\end{lemma}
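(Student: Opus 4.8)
The plan is to reduce the claim to an elementary scalar inequality via the stationarity identity for the proximal operator. Writing $v \triangleq |\eta_q(u;\chi)|$, Lemma~\ref{prox:prop}(ii), i.e. the identity $u = \eta_q(u;\chi) + \chi q\,|\eta_q(u;\chi)|^{q-1}\,{\rm sign}(u)$, together with the fact that $\eta_q(u;\chi)$ shares the sign of $u$, gives upon taking absolute values
\[
|u| \;=\; v + \chi q\, v^{q-1}.
\]
Thus the asserted bound $\frac{1}{|\eta_q(u;\chi)|^{2-q}+\chi q(q-1)}\le \frac{2}{|u|^{2-q}(q-1)}$ is equivalent to
\[
(q-1)\,|u|^{2-q} \;\le\; 2\bigl(v^{2-q} + \chi q(q-1)\bigr),
\]
and it suffices to prove this inequality for all $v\ge 0$ and $\chi\ge 0$; the case $v=0$ (hence $u=0$) is trivial, so assume $v>0$.

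Next I would split according to which summand dominates in $|u| = v + \chi q v^{q-1}$. If $v^{2-q}\ge \chi q$, then $\chi q v^{q-1}\le v$, so $|u|\le 2v$, and since $2-q\in(0,1)$ for $q\in(1,2)$ we get $|u|^{2-q}\le 2^{2-q}v^{2-q}\le 2v^{2-q}$; multiplying by $q-1<1$ yields $(q-1)|u|^{2-q}\le 2v^{2-q}$, which does not exceed the right-hand side. If instead $v^{2-q}<\chi q$, then $v<\chi q v^{q-1}$, so $|u|\le 2\chi q v^{q-1}$; combining this with $v^{q-1}\le(\chi q)^{(q-1)/(2-q)}$ (raise $v^{2-q}<\chi q$ to the power $(q-1)/(2-q)$) gives $|u|^{2-q}\le 2^{2-q}(\chi q)^{2-q}(\chi q)^{q-1}=2^{2-q}\chi q\le 2\chi q$, and multiplying by $q-1$ gives $(q-1)|u|^{2-q}\le 2\chi q(q-1)$, again bounded by the right-hand side.

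An alternative that avoids the case split: subadditivity of $t\mapsto t^{2-q}$ gives $|u|^{2-q}\le v^{2-q}+(\chi q)^{2-q}v^{(q-1)(2-q)}$, and weighted AM--GM applied to the cross term with weights $2-q$ and $q-1$ gives $(\chi q)^{2-q}v^{(q-1)(2-q)}\le (2-q)\chi q+(q-1)v^{2-q}$; hence $|u|^{2-q}\le q\,v^{2-q}+(2-q)\chi q$, and multiplying by $q-1$ and using $q(q-1)<2$ and $2-q<2$ closes the argument.

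I do not expect a genuine obstacle here: this is a routine auxiliary estimate. The only points requiring some care are that $\eta_q$ has no closed form, so one must pass through the implicit stationarity relation of Lemma~\ref{prox:prop}(ii), and that each exponent manipulation used ($2^{2-q}\le 2$, $q(q-1)<2$, and the subadditivity/AM--GM step with exponent $1/(2-q)$) relies on $1<q<2$, so the restriction on $q$ in the statement is genuinely needed.
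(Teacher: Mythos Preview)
Your proof is correct and follows essentially the same idea as the paper: split into two cases according to which term in the denominator dominates, and in each case bound by that dominant term. The paper thresholds on whether $\chi \le |u|^{2-q}/(2q)$ (in terms of $u$), while you threshold on whether $\chi q \le v^{2-q}$ (in terms of $v=|\eta_q(u;\chi)|$), but the structure is identical. Your second argument via subadditivity of $t\mapsto t^{2-q}$ and weighted AM--GM is a pleasant case-free alternative that the paper does not give.
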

\begin{proof}
It is sufficient to consider $u>0$. We analyze two different cases:
\begin{enumerate}
\item $\chi \leq u^{2-q} \frac{1}{2q}: $
According to Lemma \ref{prox:prop} part (ii), since we know $\eta_q(u; \chi)  \leq u $, we have 
\[
\eta_q (u ; \chi) = u - \chi q \eta_q^{q-1} (u; \chi)  \geq u - \chi q u^{q-1} \geq u -  u^{2-q} \frac{1}{2q} q \alpha^{q-1} = \frac{u}{2}.
\]
Hence,
\[
\frac{1}{|\eta_q (u ; \chi)|^{2-q} + \chi q(q-1)} \leq \frac{1}{|\eta_q (u ; \chi)|^{2-q} } \leq \frac{2^{2-q}}{u^{2-q}} \leq \frac{2}{(q-1)u^{2-q}}.
\]
\item $\chi \geq u^{2-q} \frac{1}{2q}: $
\[
\frac{1}{|\eta_q (u ; \chi)|^{2-q} + \chi q(q-1)} \leq \frac{1}{\chi q (q-1) } \leq \frac{2}{ (q-1)u^{2-q}}. 
\]
\end{enumerate}
This completes our proof. 
\end{proof}

Now we can consider one of the main results of this section. 

\begin{lemma}\label{epsilon1:tune:lq:general}
For any given $q\in (1,2)$, suppose the conditions in Lemma \ref{lem:firstdivisionconstantinterval} hold. Then for $\chi=C\sigma^q$ we have
\begin{eqnarray*}
\lim_{\sigma \rightarrow 0}\frac{R_q(\chi,\sigma)-1}{\sigma^2}=C^2q^2\mathbb{E}|B|^{2q-2}-2Cq(q-1)\mathbb{E}|B|^{q-2}.
\end{eqnarray*}
\end{lemma}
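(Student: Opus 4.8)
The plan is to mirror the structure of the proof of Lemma~\ref{epsilon1:tune}, replacing the crude bounds that were available when $p_\beta$ was bounded away from zero by the multi-scale estimates of Lemmas~\ref{lem:firstdivisionconstantinterval}--\ref{lem:funcupperbound1}. The starting point is the decomposition \eqref{epsilon:1}, which is valid for an arbitrary $p_\beta$ (it only uses Lemma~\ref{prox:prop}(ii),(iv), Stein's lemma, and $\mathbb{P}(B/\sigma+Z=0)=0$): with $\chi=C\sigma^q$,
\[
R_q(\chi,\sigma)-1=\underbrace{\chi^2 q^2\,\mathbb{E}\,|\eta_q(B/\sigma+Z;\chi)|^{2q-2}}_{S_1}-\underbrace{2\chi q(q-1)\,\mathbb{E}\,\frac{1}{|\eta_q(B/\sigma+Z;\chi)|^{2-q}+\chi q(q-1)}}_{S_2},
\]
so it suffices to show $S_1/\sigma^2\to C^2q^2\,\mathbb{E}|B|^{2q-2}$ and $S_2/\sigma^2\to 2Cq(q-1)\,\mathbb{E}|B|^{q-2}$.

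The term $S_1$ is handled exactly as in \eqref{epsilon:2}. Using the scale invariance $\eta_q(B/\sigma+Z;\chi)=\sigma^{-1}\eta_q(B+\sigma Z;\chi\sigma^{2-q})$ (Lemma~\ref{prox:prop}(iii)) together with $\chi^2\sigma^{-2q}=C^2$ gives $S_1/\sigma^2=C^2q^2\,\mathbb{E}\,|\eta_q(B+\sigma Z;C\sigma^2)|^{2q-2}$. Applying Lemma~\ref{lem:asymptoticbehavior} pathwise (for $\omega$ with $B(\omega)\ne 0$, take $T(\sigma)=|B+\sigma Z|$ and $\chi(\sigma)=C\sigma^2$, so $\chi(\sigma)T^{q-2}(\sigma)\to 0$) shows that the integrand converges a.s.\ to $|B|^{2q-2}$; since $|\eta_q(u;\chi)|\le|u|$ it is dominated by $(|B|+|Z|)^{2q-2}\le 1+(|B|+|Z|)^2$, which is integrable by $\mathbb{E}|B|^2<\infty$, so the Dominated Convergence Theorem yields $S_1/\sigma^2\to C^2q^2\,\mathbb{E}|B|^{2q-2}$. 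I would present this first since it requires none of the new machinery.

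The work is in $S_2$. After rewriting $S_2/\sigma^2=2Cq(q-1)\cdot\sigma^{q-2}\,\mathbb{E}\big[(|\eta_q(B/\sigma+Z;\chi)|^{2-q}+\chi q(q-1))^{-1}\big]$ and reducing to $|B|$ by the symmetry of $\eta_q$ and $Z$, I would fix any $\gamma\in(0,\tfrac12)$ and split the inner integral over the interval $\mathcal{I}^\gamma$ of \eqref{eq:igammadef} and its complement. On $\mathcal{I}^\gamma$ the sum of Lemma~\ref{lem:firstdivisionconstantinterval} (the piece $\mathcal{I}_{m^*+2}$) and Lemma~\ref{lem:limitzerocloseto1power} (the piece $\mathcal{I}^\gamma\setminus\mathcal{I}_{m^*+2}$) shows that $\sigma^{q-2}$ times this contribution tends to $0$. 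On $\mathbb{R}\setminus\mathcal{I}^\gamma$ another use of scale invariance turns the $\sigma^{q-2}$-weighted integrand into $(|\eta_q(B+\sigma Z;C\sigma^2)|^{2-q}+C\sigma^2 q(q-1))^{-1}$, which converges a.s.\ to $|B|^{q-2}$ (the right endpoint of $\mathcal{I}^\gamma$ tends to $-\infty$, so for each fixed $z$ one eventually has $z\notin\mathcal{I}^\gamma$). To pass to the limit I would split once more according to whether $|\sigma Z|\le|B|/2$ or not: on the first event $|B+\sigma Z|\ge|B|/2$, so Lemma~\ref{lem:funcupperbound1} bounds the integrand by $\tfrac{2^{3-q}}{q-1}|B|^{q-2}$, which lies in $L^1$ precisely because the hypothesis $\mathbb{P}(|B|\le t)=O(t^{2-q+\epsilon})$ forces $\mathbb{E}|B|^{q-2}<\infty$, and DCT gives the limit $\mathbb{E}|B|^{q-2}$; on the second event Lemma~\ref{lem:funcupperbound1} together with $|B+\sigma Z|\ge\alpha\sigma^{\gamma}$ (valid off $\mathcal{I}^\gamma$) bounds the integrand by $O(\sigma^{-\gamma(2-q)})$, which is annihilated by the tail estimate $\mathbb{P}(|Z|>|B|/(2\sigma))\le 2e^{-|B|^2/(8\sigma^2)}$ after partitioning the range of $|B|$ at, say, $|B|\asymp\sigma^{1/2}$ and using $\mathbb{P}(|B|\le\sigma^{1/2})=O(\sigma^{(2-q+\epsilon)/2})$; the choice $\gamma<\tfrac12$ is what makes the resulting power of $\sigma$ positive. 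Hence $\sigma^{q-2}\,\mathbb{E}\big[(|\eta_q(B/\sigma+Z;\chi)|^{2-q}+\chi q(q-1))^{-1}\big]\to\mathbb{E}|B|^{q-2}$, and therefore $S_2/\sigma^2\to 2Cq(q-1)\,\mathbb{E}|B|^{q-2}$.

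Combining the two limits in $R_q(\chi,\sigma)-1=S_1-S_2$ proves the claim. The only genuinely delicate point is the control of the $\mathcal{I}^\gamma$ contribution near $B=0$, but that is exactly what the multi-scale chaining in Lemmas~\ref{lem:firstdivisionconstantinterval}--\ref{lem:limitzerocloseto1power} already delivers; within the present proof the remaining obstacle — and it is routine — is the patching argument on $\mathbb{R}\setminus\mathcal{I}^\gamma$, where one must produce a $\sigma$-uniform integrable majorant near the origin, which the split at $|\sigma Z|\le|B|/2$ achieves.
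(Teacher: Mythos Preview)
Your argument is correct and follows the same overall strategy as the paper: decompose $R_q-1$ into $S_1$ and $S_2$, handle $S_1$ by scale invariance and DCT, and for $S_2$ isolate the dangerous neighborhood of $-b/\sigma$ using Lemmas~\ref{lem:firstdivisionconstantinterval}--\ref{lem:limitzerocloseto1power} before passing to the limit on the rest.

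Where you differ is in the treatment of the complement $\mathbb{R}\setminus\mathcal{I}^\gamma$. The paper first splits $|B|$ at $1$ (reusing the bounded-away-from-zero argument for $|B|>1$), and for $|B|\le 1$ introduces a $b$-dependent interval $[-b/\sigma-b^c/(2\sigma),\,-b/\sigma+b^c/(2\sigma)]$ with an auxiliary exponent $c>1$ chosen close to $1$ so that $\mathbb{E}|B|^{c(q-2)}<\infty$; it then has to bound the transition region $\tilde{\mathcal{I}}^c\setminus\mathcal{I}_c^\gamma$ by a further split at $\tilde{C}\sigma\sqrt{\log(1/\sigma)}$. Your split $\{|\sigma Z|\le |B|/2\}$ versus its complement achieves the same dominated-convergence majorant $|B|^{q-2}$ in one stroke (no $c$, no separate case $|B|>1$), and the remaining piece is controlled by the Gaussian tail combined with the crude bound $|B+\sigma Z|\ge \alpha\sigma^\gamma$ off $\mathcal{I}^\gamma$; your observation that $\gamma<\tfrac12$ makes the exponent $(2-q+\epsilon)/2-\gamma(2-q)$ positive is exactly right. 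The paper's route is a bit more intricate but stays entirely in ``space'' variables $(b,z)$, whereas your route exploits the probabilistic structure (Gaussian tails of $Z$) more directly and is slightly shorter.
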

\begin{proof}
We follow the same roadmap as in the proof of Lemma \ref{epsilon1:tune}. Recall that 
\begin{eqnarray}\label{lq:general:final:zero}
\hspace{-0.2cm} R_q(\chi, \sigma) -1 = \underbrace{\chi^2 q^2 \mathbb{E} |\eta_q(B / \sigma +Z ; \chi)  |^{2q-2}}_{S_1} \underbrace{- 2  \chi q(q-1) \mathbb{E} \frac{|\eta_q (B/ \sigma + Z; \chi)|^{q-2}}{ 1+ \chi q (q-1) |\eta_q (B/ \sigma +Z ; \chi)|^{q-2}} }_{S_2}.
\end{eqnarray}
The first term $S_1$ can be calculated in the same way as in the proof of Lemma  \ref{epsilon1:tune}.
\begin{equation}\label{lq:general:final:one}
\lim_{\sigma \rightarrow 0} \sigma^{-2} S_1 = C^2q^2\mathbb{E}|B|^{2q-2}. 
\end{equation}
We now focus on analyzing $S_2$. First note that restricting $|B|$ to be bounded away from $0$ makes it possible to follow the same arguments used in the proof of Lemma \ref{epsilon1:tune} to obtain,
\begin{eqnarray}\label{lq:general:final:two}
\lim_{\sigma \rightarrow 0}  \mathbb{E} \frac{\mathbbm{1}(|B|>1)}{ |\eta_q (|B|+\sigma Z ; C\sigma^2)|^{2-q} + C\sigma^2 q (q-1) }=\mathbb{E}|B|^{q-2}\mathbbm{1}(|B|>1).
\end{eqnarray}
Hence we next consider the event $|B|\leq 1$. 
\begin{eqnarray*}
&&  \mathbb{E} \frac{\mathbbm{1}(|B|\leq 1)}{ |\eta_q (|B|+\sigma Z ; C\sigma^2)|^{2-q} + C\sigma^2 q (q-1) }  \nonumber \\
&=&   \underbrace{\int_{0}^1 \int_{-b/\sigma - b^c/(2\sigma)}^{-b/\sigma + b^c/(2\sigma)} \frac{1}{ |\eta_q (b +\sigma z ; C\sigma^2)|^{2-q} +C\sigma^2 q (q-1)} \phi(z) dz dF(b)}_{T_1} \nonumber \\
&&  +  \underbrace{\int_{0}^1 \int_{\mathbb{R} \backslash [-b/\sigma - b^c/(2\sigma),-b/\sigma + b^c/(2\sigma)]} \frac{1}{ |\eta_q (b +\sigma z ; C\sigma^2)|^{2-q} +C\sigma^2 q (q-1)} \phi(z) dz dF(b)  }_{T_2},
\end{eqnarray*}
where $c>1$ is a constant that we will specify later. We first analyze $T_2$. Note that,
\begin{eqnarray*}
T_2=\mathbb{E} \frac{\mathbbm{1}(|B+\sigma Z|\geq |B|^c/2, |B|\leq 1)}{ |\eta_q (B +\sigma Z ; C\sigma^2)|^{2-q} +C\sigma^2 q (q-1)},
\end{eqnarray*}
and 
\begin{eqnarray*}
 \frac{\mathbbm{1}(|B+\sigma Z|\geq |B|^c/2, |B|\leq 1)}{ |\eta_q (B +\sigma Z ; C\sigma^2)|^{2-q} +C\sigma^2 q (q-1)}\overset{(a)}{\leq} \frac{2\mathbbm{1}(|B+\sigma Z|\geq |B|^c/2)}{(q-1)|B+\sigma Z|^{2-q}} \leq \frac{|B|^{c(q-2)}}{2^{q-3}(q-1)},
\end{eqnarray*}
where $(a)$ is due to Lemma \ref{lem:funcupperbound1}. For any $1<q<2$, it is straightforward to verify that $\mathbb{E}|B|^{c(q-2)}<\infty$ if $c$ is chosen close enough to $1$. We can then apply Dominated Convergence Theorem (DCT) to obtain 
\begin{eqnarray}\label{lq:general:final:three}
\lim_{\sigma\rightarrow 0}T_2=\mathbb{E}\mathbbm{1}(|B|\geq |B|^c/2, |B|\leq 1)|B|^{q-2}=\mathbb{E}|B|^{q-2}\mathbbm{1}(|B|\leq 1).
\end{eqnarray}
We now turn to bounding $T_1$. According to Lemmas \ref{lem:firstdivisionconstantinterval} and \ref{lem:limitzerocloseto1power}, we know
\[
\sigma^{q-2} \int_0^\infty \int_{\mathcal{I}^\gamma } \frac{1}{|\eta_q(b/\sigma+z ; \chi)|^{2-q} + \chi q(q-1)} \phi(z)dz dF(b) \rightarrow 0,
\]
where $\mathcal{I}^\gamma = [-\frac{b}{\sigma} - \frac{\alpha}{\sigma^{1- \gamma}}, -\frac{b}{\sigma} + \frac{\alpha}{\sigma^{1- \gamma}} ]$. Define $\mathcal{I}_c^\gamma = [-\frac{b}{\sigma} - \frac{b^c}{\sigma^{1- \gamma}}, -\frac{b}{\sigma} + \frac{b^c}{\sigma^{1- \gamma}} ]$ and $\tilde{\mathcal{I}}^c= [-\frac{b}{\sigma}- \frac{b^c}{2\sigma}, -\frac{b}{\sigma}+\frac{b^c}{2 \sigma}]$. For $0\leq b\leq 1$, we get $\mathcal{I}^{\gamma}_c \subseteq \mathcal{I}^{\gamma}$ for any given $\alpha >1$. Therefore,
\begin{eqnarray*}\label{lq:general:final:four}
T_3 \triangleq \sigma^{q-2} \int_0^1 \int_{\mathcal{I}_c^\gamma } \frac{1}{|\eta_q(b/\sigma+z ; \chi)|^{2-q} + \chi q(q-1)} \phi(z)dz dF(b) \rightarrow 0.
\end{eqnarray*}
Hence to bound $T_1$, it is sufficient to bound $T_1-T_3$:
\begin{eqnarray*}
T_1- T_3 &=& \sigma^{q-2} \int_0^1 \int_{\tilde{\mathcal{I}}^c \backslash \mathcal{I}_c^{\gamma}} \frac{1}{|\eta_q(b/\sigma+z ; \chi)|^{2-q} + \chi q(q-1)} \phi(z)dz dF(b) \nonumber \\
& \leq& \sigma^{q-2} \int_0^1 \int_{\tilde{\mathcal{I}}^c \backslash \mathcal{I}_c^{\gamma}} \frac{1}{|\eta_q(b^c/\sigma^{1-\gamma} ; \chi)|^{2-q}  + \chi q(q-1)} \phi(z)dz dF(b) \nonumber \\
&\overset{(b)}{\leq}&  \sigma^{q-2 +(1-\gamma) (2-q)} \int_0^1 \int_{\tilde{\mathcal{I}}^c \backslash \mathcal{I}^{\gamma}_c} \frac{2b^{c(q-2)}}{q-1} \phi(z)dz dF(b) \nonumber \\
&=&   \underbrace{\sigma^{q-2 +(1-\gamma)(2-q)}  \int_0^{\tilde{C}\sigma \sqrt{\log(1/\sigma)}} \int_{\tilde{\mathcal{I}}^c \backslash \mathcal{I}_c^{\gamma}} \frac{2b^{c(q-2)}}{q-1} \phi(z)dz dF(b)}_{T_4} \nonumber \\
&&+  \underbrace{\sigma^{q-2 +(1-\gamma)(2-q)}  \int_{\tilde{C} \sigma \sqrt{\log(1/\sigma)}}^1 \int_{\tilde{\mathcal{I}}^c\backslash \mathcal{I}_c^{\gamma}} \frac{2b^{c(q-2)}}{q-1} \phi(z)dz dF(b)}_{T_5},
\end{eqnarray*}  
where $(b)$ is the result of Lemma \ref{lem:funcupperbound1} and $\tilde{C}$ is a positive constant. We first bound $T_5$ in the following:
\begin{eqnarray*}
T_5 \leq \frac{2\sigma^{q-2 +(1-\gamma)(2-q)}}{q-1} \int_{\tilde{C}  \sigma \sqrt{\log(1/\sigma)}}^1\frac{b^{c(q-1)}}{\sigma}\phi\left(\frac{b}{2\sigma}\right)dF(b)\leq  \frac{2\sigma^{q-3 +(1-\gamma)(2-q)}}{q-1} \phi(\tilde{C}\sqrt{\log(1/\sigma)}/2).
\end{eqnarray*}
It is then easily seen that $T_5$ goes to zero by choosing large enough $\tilde{C}$. For the remaining term $T_4$, 
\begin{eqnarray*}
T_4  &\leq& \frac{2\sigma^{q-3 +(1-\gamma)(2-q)} }{q-1} \int_0^{\tilde{C} \sigma \sqrt{\log(1/\sigma)}} b^{c(q-1)} \phi\left (\frac{b}{2\sigma}\right ) dF(b) \nonumber \\ 
&\leq & \frac{2\sigma^{q-3 +(1-\gamma)(2-q)} }{q-1} (\tilde{C} \sigma \sqrt{\log(1/\sigma)})^{c(q-1)}\phi(0)\mathbb{P}(|B|\leq \tilde{C}  \sigma \sqrt{\log(1/\sigma)})  \nonumber \\ 
&\leq&O(1) \sigma^{c(q-1) - \gamma (2-q)+1-q+\epsilon} (\log(1/\sigma))^{(c(q-1)+2-q+\epsilon)/2} \rightarrow 0. \label{lq:general:final:six}
\end{eqnarray*}
To obtain the last statement, we can choose $\gamma$ close enough to zero and $c$ close to 1. Hence we can conclude $T_1 \rightarrow 0$ as $\sigma \rightarrow 0$. This combined with the results in \eqref{lq:general:final:two} and \eqref{lq:general:final:three}  gives us
\begin{eqnarray*}
\lim_{\sigma \rightarrow 0} -\sigma^{-2}S_2=2Cq(q-1)\mathbb{E}\frac{1}{ |\eta_q (|B|+\sigma Z ; C\sigma^2)|^{2-q} + C\sigma^2 q (q-1)}=2Cq(q-1)\mathbb{E}|B|^{q-2}.
\end{eqnarray*}
The above result together with \eqref{lq:general:final:one} finishes the proof. 
\end{proof}

As stated in the roadmap of the proof, our first goal is to characterize the convergence rate of $\chi^*_q(\sigma)$. Towards this goal, we first show that $\chi^*_q(\sigma)$ cannot be either too large or too small. In particular, in Lemmas \ref{lem:chi*bound1} and \ref{lem:chi*bound2}, we show that $\chi^*_q(\sigma) = O(\sigma^{q-1})$ and $\chi^*_q(\sigma) = \Omega(\sigma^q)$. We then utilize such result in Lemma \ref{lq:general:final:rate:lemma} to conclude that $\chi^*_q(\sigma) = \Theta(\sigma^q)$. 

\begin{lemma}\label{lem:chi*bound1}
Suppose $\mathbb{E}|B|^2<\infty$, if $\chi \sigma^{1-q} = \infty$ and $\chi=o(1)$, then $R_q(\chi, \sigma) \rightarrow \infty$, as $\sigma \rightarrow 0$.
\end{lemma}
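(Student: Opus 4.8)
The plan is to normalise the risk to the noise scale using the scale invariance of Lemma~\ref{prox:prop}(iii), and then to exhibit a positive-probability event on which the shrinkage bias introduced by $\eta_q$, measured on that scale, is exactly of order $\chi\sigma^{1-q}$ — which is the quantity assumed to diverge. Concretely, taking $\alpha=1/\sigma$ in Lemma~\ref{prox:prop}(iii) gives $\eta_q(B/\sigma+Z;\chi)=\sigma^{-1}\eta_q(B+\sigma Z;\psi)$ with $\psi:=\sigma^{2-q}\chi$, hence
\[
R_q(\chi,\sigma)=\sigma^{-2}\,\mathbb{E}\big(\eta_q(B+\sigma Z;\psi)-B\big)^2 .
\]
Since $q<2$ and $\chi=o(1)$ we have $\psi\to 0$, whereas $\rho:=\psi/\sigma=\chi\sigma^{1-q}\to\infty$; it is this $\rho$ that will drive the divergence.

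First I would fix a bounded interval on which $|B|$ lives with positive probability: because $p_\beta$ has no atom at $0$ we have $\mathbb{P}(|B|=0)=0$, so decomposing $(0,\infty)$ into dyadic shells produces constants $0<a<b<\infty$ with $p_0:=\mathbb{P}(a<|B|\le b)>0$. I then work on the event $E:=\{a<|B|\le b\}\cap\{|\sigma Z|<a/2\}$, which satisfies $\mathbb{P}(E)\ge p_0/2$ for $\sigma$ small, and on which $|B+\sigma Z|\in(a/2,2b)$ and $\mathrm{sign}(B+\sigma Z)=\mathrm{sign}(B)$. Using $0<\eta_q(u;\psi)<u$ for $u>0$ together with the fixed-point identity $u=\eta_q(u;\psi)+\psi q|\eta_q(u;\psi)|^{q-1}\mathrm{sign}(u)$ from Lemma~\ref{prox:prop}(ii), one gets $\eta_q(u;\psi)\ge u\big(1-\psi q\,u^{q-2}\big)$; since $q-2<0$ and $u>a/2$ on $E$, the bracket exceeds $1/2$ once $\sigma$ is small, so (also using oddness, Lemma~\ref{prox:prop}(i)) $a/4\le|\eta_q(B+\sigma Z;\psi)|\le 2b$ on $E$.

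Next I would decompose, on $E$ (writing $\eta_q$ for $\eta_q(B+\sigma Z;\psi)$),
\[
\eta_q(B+\sigma Z;\psi)-B=-\psi q\,|\eta_q|^{q-1}\mathrm{sign}(B)+\sigma Z ,
\]
square it, divide by $\sigma^2$, drop the nonnegative $Z^2$ term, and restrict the expectation to $E$:
\[
R_q(\chi,\sigma)\ \ge\ \mathbb{E}\Big[\big(\rho^2q^2|\eta_q|^{2q-2}-2\rho q|\eta_q|^{q-1}\mathrm{sign}(B)Z\big)\mathbbm{1}_E\Big]\ \ge\ \rho^2q^2(a/4)^{2q-2}\,\mathbb{P}(E)-2\rho q(2b)^{q-1}\mathbb{E}|Z| ,
\]
where the last bound uses $2q-2>0$ on the diagonal term and $q-1>0$ on the cross term. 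The right-hand side is quadratic in $\rho$ with positive leading coefficient at least $\tfrac12 p_0 q^2(a/4)^{2q-2}$, so it tends to $+\infty$ as $\rho=\chi\sigma^{1-q}\to\infty$, which proves $R_q(\chi,\sigma)\to\infty$. (For $q=2$ the conclusion is immediate from the closed form $\eta_2(u;\psi)=u/(1+2\psi)$, as noted at the start of this subsection.)

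The computation itself is elementary; the one point that needs care is the uniform two-sided control $a/4\le|\eta_q(B+\sigma Z;\psi)|\le 2b$ on $E$, since that is what simultaneously makes the ``bias-squared'' term genuinely of order $\rho^2$ and keeps the cross term only of order $\rho$. Working on the whole line instead, the unbounded contributions of large $|B|$ to $|\eta_q|^{q-1}$ in the cross term (and the need to justify passing to the limit inside the expectation) would obscure the argument, so localising to the bounded event $E$ is the real content of the proof.
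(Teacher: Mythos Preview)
Your proof is correct, but it takes a different route from the paper's. The paper reuses the Stein-based decomposition $R_q(\chi,\sigma)-1=S_1+S_2$ already established in \eqref{lq:general:final:zero}: rescaling $S_1=\chi^2 q^2\sigma^{2-2q}\,\mathbb{E}|\eta_q(B+\sigma Z;\chi\sigma^{2-q})|^{2q-2}$ and applying dominated convergence (using $\chi\sigma^{2-q}\to 0$) gives $\chi^{-2}\sigma^{2q-2}S_1\to q^2\mathbb{E}|B|^{2q-2}$, so $S_1\to\infty$; meanwhile $|S_2|\le 2$ trivially since the integrand in $S_2$ is bounded by $1/(\chi q(q-1))$. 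That is the whole argument.

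Your approach instead localises to a bounded event $E$ and gets explicit two-sided control of $|\eta_q|$ there, then expands the square directly. This is more elementary in that it avoids both Stein's lemma and the dominated convergence theorem, and it makes transparent \emph{why} the divergence is driven precisely by $\rho=\chi\sigma^{1-q}$: on $E$ the bias term is genuinely $\Theta(\rho)$ with uniform constants. The paper's route is shorter because it cashes in on machinery already built earlier in the section; yours is self-contained and would work even for a reader who has not seen the $S_1,S_2$ splitting. Both isolate the same ``bias-squared'' term $\rho^2 q^2|\eta_q|^{2q-2}$ as the source of blow-up, they just control it globally (paper) versus on a bounded slice (you).
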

\begin{proof}
Consider the formula of $R_q(\chi,\sigma)$ in \eqref{lq:general:final:zero}. Since $\chi=o(1)$, it is straightforward to apply Dominated Convergence Theorem to obtain
\[
\lim_{\sigma \rightarrow 0} \chi^{-2} \sigma^{2q-2} S_1 = q^2 \mathbb{E} |B|^{2q-2}. 
\]
Because $\chi^2\sigma^{2-2q} \rightarrow \infty$, we know $ S_1 \rightarrow  \infty$. Also note
\begin{eqnarray*}
|S_2| \leq 2 \chi q(q-1) \cdot \frac{1}{\chi q(q-1)} =2. 
\end{eqnarray*}
Hence, $R_q(\chi, \sigma )  \rightarrow \infty$. 
\end{proof}

\begin{lemma}\label{lem:chi*bound2}
Suppose that the same conditions for $B$ in Lemma \ref{epsilon1:tune:lq:general} hold, if $\chi = o(\sigma^q)$, then 
\[
\frac{R_q(\chi, \sigma) -1}{\sigma^2} \rightarrow 0, \mbox{~~as~}\sigma \rightarrow 0.
\]
\end{lemma}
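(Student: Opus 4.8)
The plan is to imitate the proof of Lemma~\ref{epsilon1:tune:lq:general}. Starting from the decomposition \eqref{lq:general:final:zero}, write $R_q(\chi,\sigma)-1=S_1+S_2$ and apply the scale invariance of Lemma~\ref{prox:prop} part (iii), which gives $\sigma\,\eta_q(B/\sigma+Z;\chi)=\eta_q(B+\sigma Z;\sigma^{2-q}\chi)$, to rewrite
\[
\frac{S_1}{\sigma^2}=q^2\,\chi^2\sigma^{-2q}\,\mathbb{E}\,\big|\eta_q(B+\sigma Z;\sigma^{2-q}\chi)\big|^{2q-2},\qquad \frac{S_2}{\sigma^2}=-2q(q-1)\,\chi\sigma^{-q}\,\mathbb{E}\frac{1}{|\eta_q(B+\sigma Z;\sigma^{2-q}\chi)|^{2-q}+\sigma^{2-q}\chi\,q(q-1)}.
\]
The hypothesis $\chi=o(\sigma^q)$ enters only through $\chi^2\sigma^{-2q}=o(1)$, $\chi\sigma^{-q}=o(1)$, and $\sigma^{2-q}\chi=o(\sigma^2)$. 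The term $S_1/\sigma^2$ is then immediate: by $|\eta_q(u;\cdot)|\le|u|$ (Lemma~\ref{prox:prop} part (ii)) and Jensen's inequality (legitimate since $2q-2\le2$) the expectation is at most $(\mathbb{E}(B+\sigma Z)^2)^{q-1}=(\mathbb{E}|B|^2+\sigma^2)^{q-1}=O(1)$, so $S_1/\sigma^2\to0$.

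The substance lies in $S_2/\sigma^2$. Since the prefactor $\chi\sigma^{-q}$ already vanishes, it is enough to show
\[
\mathbb{E}\frac{1}{|\eta_q(B+\sigma Z;\sigma^{2-q}\chi)|^{2-q}+\sigma^{2-q}\chi\,q(q-1)}=O(1)\qquad(\sigma\to0).
\]
I would prove this by reproducing the handling of $S_2$ in Lemma~\ref{epsilon1:tune:lq:general}: split on $\{|B|>1\}$ versus $\{|B|\le1\}$; on the former, and on the part of the latter where $|B+\sigma Z|\ge|B|^c/2$ for a fixed $c>1$ close to $1$, Lemma~\ref{lem:funcupperbound1} bounds the integrand by $2|B+\sigma Z|^{q-2}/(q-1)\le 2^{3-q}|B|^{c(q-2)}/(q-1)$, so dominated convergence applies, the dominating function $|B|^{c(q-2)}$ being integrable once $c(2-q)<2-q+\epsilon$ (a consequence of $\mathbb{P}(|B|\le t)=O(t^{2-q+\epsilon})$). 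The remaining contribution is the integral over the thin window $[-b/\sigma-b^c/(2\sigma),\,-b/\sigma+b^c/(2\sigma)]$ with $b\le1$, which is exactly the quantity the chaining estimates of Lemmas~\ref{lem:firstdivisionconstantinterval} and \ref{lem:limitzerocloseto1power} drive to zero. Those lemmas are stated for $\chi=C\sigma^q$, so I would verify their nested-interval arguments still run when $\chi=o(\sigma^q)$: the scale-invariance inputs of the form ``$\chi\cdot(\mathrm{scale})^{q-2}\to0$'' only improve for smaller $\chi$, while the one step that a smaller $\chi$ makes worse --- the crude bound $1/(|\eta_q|^{2-q}+\chi q(q-1))\le1/(\chi q(q-1))$ on the innermost interval --- is replaced by $1/|\eta_q|^{2-q}$ together with Lemma~\ref{lem:asymptoticbehavior}, after which the same logarithmically graded intervals close the estimate. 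Collecting everything, the expectation is $O(1)$ (in fact it converges to $\mathbb{E}|B|^{q-2}$), so $S_2/\sigma^2\to0$ and the lemma follows.

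The main obstacle is precisely this uniform-in-$\sigma$ control of the $S_2$ expectation: the singularity of $u\mapsto|\eta_q(u;\cdot)|^{q-2}$ at $u=0$ is what forced the delicate nested-interval chaining to begin with, and one must confirm that taking $\chi$ below $\sigma^q$ does not resurrect a divergence near $u=0$. It does not, for a structural reason --- a smaller $\chi$ only pushes $\eta_q$ toward the identity, so on the dangerous window $1/|\eta_q(b/\sigma+z;\chi)|^{2-q}$ stays comparable to the integrable $1/|b/\sigma+z|^{2-q}$ (Lemma~\ref{lem:asymptoticbehavior}) --- but converting this into the required $O(1)$ bound still means re-running the geometric interval decomposition of Lemma~\ref{lem:firstdivisionconstantinterval} with its $\chi$-dependent terms re-estimated.
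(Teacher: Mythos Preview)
Your overall plan---decompose $R_q-1=S_1+S_2$, kill $S_1/\sigma^2$ immediately via $\chi^2\sigma^{-2q}=o(1)$, and for $S_2/\sigma^2$ show the rescaled expectation stays $O(1)$ so that the prefactor $\chi\sigma^{-q}=o(1)$ finishes---is the paper's route. The divergence is in how the thin window is handled, and in particular the innermost interval $\mathcal{I}_{-1}$.

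Your proposed fix there has a gap. You write that the crude bound $1/(\chi q(q-1))$, which deteriorates as $\chi$ shrinks, should be ``replaced by $1/|\eta_q|^{2-q}$ together with Lemma~\ref{lem:asymptoticbehavior}.'' But Lemma~\ref{lem:asymptoticbehavior} needs $\chi\,T^{q-2}\to0$, and on $\mathcal{I}_{-1}$ the argument $T=|b/\sigma+z|$ ranges down to $0$, so $\chi\,T^{q-2}\to\infty$ for any positive $\chi$; the lemma simply does not apply there. The paper sidesteps this, and avoids re-running the chaining altogether, with two observations. First, on $\mathcal{I}_{-1}$ it \emph{keeps} the crude bound: after multiplying by the $\chi/\sigma^2$ prefactor carried by $-S_2/\sigma^2$, the $\chi$ cancels and what remains is $\sigma^{-2}\int_0^\infty\!\int_{\mathcal{I}_{-1}}\phi(z)\,dz\,dF(b)$, precisely the quantity already shown to vanish in \eqref{eq:integraloverI0first}. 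Second, on $\mathcal{I}^\gamma\setminus\mathcal{I}_{-1}$ it invokes the monotonicity of the proximal operator in $\chi$ (Lemma~\ref{prox:prop}(v)): since eventually $\chi<C\sigma^q$, one has $|\eta_q(u;\chi)|\ge|\eta_q(u;C\sigma^q)|$, hence
\[
\frac{\chi}{\sigma^2}\int_0^\infty\!\int_{\mathcal{I}^\gamma\setminus\mathcal{I}_{-1}}\frac{\phi(z)\,dz\,dF(b)}{|\eta_q(b/\sigma+z;\chi)|^{2-q}}\ \le\ \frac{\chi}{\sigma^q}\cdot\sigma^{q-2}\!\int_0^\infty\!\int_{\mathcal{I}^\gamma\setminus\mathcal{I}_{-1}}\frac{\phi(z)\,dz\,dF(b)}{|\eta_q(b/\sigma+z;C\sigma^q)|^{2-q}},
\]
and the second factor is exactly what Lemmas~\ref{lem:firstdivisionconstantinterval}--\ref{lem:limitzerocloseto1power} already drove to zero---no re-estimation of the nested intervals is needed. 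The remaining pieces (outside $\mathcal{I}^\gamma$, and the $|B|>1$ part) then follow Lemma~\ref{epsilon1:tune:lq:general} as you indicate.
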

\begin{proof}
Consider the expression of $R_q(\chi,\sigma)-1$ in  \eqref{lq:general:final:zero}. First note that
\[
\lim_{\sigma \rightarrow 0} \frac{S_1}{\sigma^2} = \lim_{\sigma \rightarrow 0} \frac{\chi^2 \sigma^{2-2q} q^2 \mathbb{E} |\eta_q(B + \sigma Z; \chi \sigma^{2-q})|^{2q-2}  }{\sigma^2}=0.
\]
Now we study the behavior of $S_2$. Recall that we defined $\mathcal{I}_{-1} = \left[-\frac{b}{\sigma}- \frac{\sigma^{q-\epsilon}}{\log(\frac{1}{\sigma})},  -\frac{b}{\sigma}+ \frac{\sigma^{q-\epsilon}}{\log(\frac{1}{\sigma})} \right]$ and $\mathcal{I}^\gamma = \left [-\frac{b}{\sigma} - \frac{\alpha}{\sigma^{1- \gamma}}, -\frac{b}{\sigma} + \frac{\alpha}{\sigma^{1- \gamma}} \right ]$ in \eqref{eq:intervaldefsI0_I_i} and \eqref{eq:igammadef}, respectively. It is straightforward to use the same argument as for bounding $P_{-1}$ in the proof of Lemma \ref{lem:firstdivisionconstantinterval} (see the derivations in \eqref{eq:integraloverI0first}) to have
\begin{eqnarray*}
\frac{\chi }{\sigma^2} \int_{0}^{\infty} \int_{\mathcal{I}_{-1}} \frac{\phi(z)}{|\eta_q(b/\sigma +z ; \chi)|^{2-q}+ \chi q (q-1)} dz dF(b) \leq \frac{\chi}{\sigma^2} \int_{0}^{\infty} \int_{\mathcal{I}_{-1}} \frac{\phi(z)}{ \chi q (q-1)} dz dF(b) \rightarrow 0.
\end{eqnarray*}
Moreover, since $\chi < C \sigma^q$ for small enough $\sigma$, Lemma \ref{prox:prop} part (v) implies 
\[
|\eta_q (b/\sigma+z; \chi)| \geq |\eta_q (b/\sigma+z; C \sigma^q)|.  
\] 
Therefore, as $\sigma \rightarrow 0$
\begin{eqnarray*}
&&\frac{\chi }{\sigma^2} \int_{0}^{\infty} \int_{\mathcal{I}^{\gamma} \backslash \mathcal{I}_{-1}} \frac{\phi(z)}{|\eta_q(b/\sigma +z ; \chi)|^{2-q} } dz dF(b) \\
&& \leq  \frac{\chi }{\sigma^q} \cdot \frac{1}{\sigma^{2-q}} \int_{0}^{\infty} \int_{\mathcal{I}^{\gamma} \backslash \mathcal{I}_{-1}} \frac{\phi(z)}{|\eta_q(b/\sigma +z ; C \sigma^q)|^{2-q}} dz dF(b)\rightarrow 0,
\end{eqnarray*}
where the last statement holds because of $\frac{1}{\sigma^{2-q}} \int_{0}^{\infty} \int_{\mathcal{I}^{\gamma} \backslash \mathcal{I}_{-1}} \frac{\phi(z)}{|\eta_q(b/\sigma +z ; C \sigma^q)|^{2-q}} dz dF(b)  \rightarrow 0$ that has already been shown in the proof of Lemmas \ref{lem:firstdivisionconstantinterval} and \ref{lem:limitzerocloseto1power}. Above all we have proved
\begin{eqnarray*}
 \frac{\chi}{\sigma^2} \int_{0}^{\infty} \int_{ \mathcal{I}^{\gamma}} \frac{\phi(z)}{|\eta_q(b/\sigma +z ; \chi)|^{2-q}+ \chi q (q-1)} dz dF(b) \rightarrow 0.
\end{eqnarray*}
Based on the above result, we can easily follow the same derivations of bounding the term $T_1$ in the proof of Lemma \ref{epsilon1:tune:lq:general} to conclude
\begin{eqnarray}\label{revise:simple:one}
\lim_{\sigma \rightarrow 0}\frac{\chi}{\sigma^2}\int_{0}^{1}\int_{-b/\sigma-b^c/(2\sigma)}^{-b/\sigma+b^c/(2\sigma)} \frac{\phi(z)}{|\eta_q(b/\sigma +z ; \chi)|^{2-q}+ \chi q (q-1)} dz dF(b)=0.
\end{eqnarray}
Furthermore, because $\chi=o(\sigma^q)$, the analyses to derive Equation \eqref{lq:general:final:two} and bound $T_2$ in the proof of Lemma \ref{epsilon1:tune:lq:general} can be adapted here and yield
\begin{eqnarray}
&&\hspace{-0.4cm} \lim_{\sigma \rightarrow 0}\frac{\chi}{\sigma^2}\int_{1}^{\infty}\int_{-\infty}^{+\infty} \frac{\phi(z)}{|\eta_q(b/\sigma +z ; \chi)|^{2-q}+ \chi q (q-1)} dz dF(b)=0, \label{revise:simple:two} \\
&&\hspace{-0.4cm} \lim_{\sigma \rightarrow 0}\frac{\chi}{\sigma^2}\int_{0}^{1}\int_{\mathbb{R} \backslash [-b/\sigma-b^c/(2\sigma),-b/\sigma+b^c/(2\sigma)]} \frac{\phi(z)}{|\eta_q(b/\sigma +z ; \chi)|^{2-q}+ \chi q (q-1)} dz dF(b)=0. \label{revise:simple:three} 
\end{eqnarray}
Putting results \eqref{revise:simple:one}, \eqref{revise:simple:two} and \eqref{revise:simple:three} together gives us
\begin{eqnarray*}
\lim_{\sigma \rightarrow 0} \frac{-S_2}{\sigma^2}=\lim_{\sigma \rightarrow 0} \frac{2\chi q(q-1)}{\sigma^2} \int_{0}^{\infty} \int_{-\infty}^{\infty} \frac{\phi(z)}{|\eta_q(b/\sigma +z ; \chi)|^{2-q}+ \chi q (q-1)} dz dF(b) =0.
\end{eqnarray*}
This finishes the proof.
\end{proof}
Collecting the results from Lemmas \ref{epsilon1:tune:lq:general}, \ref{lem:chi*bound1} and \ref{lem:chi*bound2}, we can upper and lower bound the optimal threshold value $\chi_q^*(\sigma)$ as shown in the following corollary.

\begin{corollary}\label{lp:general:middle:step}
Suppose the conditions for $B$ in Lemma \ref{epsilon1:tune:lq:general} hold. Then as $\sigma \rightarrow 0$, we have
\[
\chi_q^*(\sigma) = \Omega(\sigma^q), \quad \chi_q^*(\sigma) = O(\sigma^{q-1}).
\] 
\end{corollary}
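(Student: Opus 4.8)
The plan is to derive both bounds by contradiction, feeding the three preceding lemmas into the single elementary observation that $R_q(0,\sigma)=\mathbb{E}Z^2=1$ (since $\eta_q(u;0)=u$), so by optimality of $\chi_q^*(\sigma)$ one always has $R_q(\chi_q^*(\sigma),\sigma)\le 1$. This inequality is the common lever for both halves of the statement.

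For the lower bound $\chi_q^*(\sigma)=\Omega(\sigma^q)$, I would first record that the hypothesis $\mathbb{P}(|B|<t)=O(t^{2-q+\epsilon})$ forces $0<\mathbb{E}|B|^{q-2}<\infty$ (exactly the point noted after Theorem \ref{thm:densegeneralcase}), while $\mathbb{E}|B|^2<\infty$ gives $0<\mathbb{E}|B|^{2q-2}<\infty$. Hence the choice $C^\star=(q-1)\mathbb{E}|B|^{q-2}/(q\,\mathbb{E}|B|^{2q-2})$ — or any sufficiently small fixed $C>0$ — makes the quadratic $C^2q^2\mathbb{E}|B|^{2q-2}-2Cq(q-1)\mathbb{E}|B|^{q-2}$ equal a strictly negative constant $-c_0$. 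Lemma \ref{epsilon1:tune:lq:general} then gives $R_q(C^\star\sigma^q,\sigma)=1-c_0\sigma^2+o(\sigma^2)$, so $R_q(\chi_q^*(\sigma),\sigma)\le R_q(C^\star\sigma^q,\sigma)\le 1-\tfrac{c_0}{2}\sigma^2$ for all small $\sigma$. If $\chi_q^*(\sigma)=\Omega(\sigma^q)$ failed, there would be $\sigma_n\downarrow 0$ with $\chi_q^*(\sigma_n)=o(\sigma_n^q)$, and Lemma \ref{lem:chi*bound2} along this sequence would give $(R_q(\chi_q^*(\sigma_n),\sigma_n)-1)/\sigma_n^2\to 0$, contradicting the bound $\le -c_0/2$ just obtained.

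For the upper bound $\chi_q^*(\sigma)=O(\sigma^{q-1})$, I would first invoke the fact — recorded at the start of Section \ref{lqls:away:q12}, whose statement imposes no restriction on $B$ beyond the standing assumptions — that $\chi_q^*(\sigma)\to 0$ as $\sigma\to 0$ (alternatively, a short Fatou argument: if $\chi$ stays bounded away from $0$ then $|\eta_q(B/\sigma+Z;\chi)|\to\infty$ a.s.\ on the full-measure event $\{|B|>0\}$, so $S_1=\chi^2q^2\mathbb{E}|\eta_q(B/\sigma+Z;\chi)|^{2q-2}\to\infty$ while $S_2\to 0$, forcing $R_q>1$, which is impossible). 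Then, if $\chi_q^*(\sigma)=O(\sigma^{q-1})$ failed, along some $\sigma_n\downarrow 0$ we would have both $\chi_q^*(\sigma_n)=o(1)$ and $\chi_q^*(\sigma_n)\sigma_n^{1-q}\to\infty$, so Lemma \ref{lem:chi*bound1} would give $R_q(\chi_q^*(\sigma_n),\sigma_n)\to\infty$, again contradicting $R_q(\chi_q^*(\sigma_n),\sigma_n)\le 1$.

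Given how much has already been absorbed into Lemmas \ref{lem:firstdivisionconstantinterval}--\ref{lem:chi*bound2}, the corollary itself should be essentially immediate and I do not anticipate a genuine obstacle. The only two points that need care are (i) choosing $C$ in Lemma \ref{epsilon1:tune:lq:general} so that the limiting second-order constant is \emph{strictly} negative rather than merely $o(1)$ — this is what lets Lemma \ref{lem:chi*bound2} rule out $\chi_q^*(\sigma)=o(\sigma^q)$ — and (ii) having $\chi_q^*(\sigma)\to 0$ available before invoking Lemma \ref{lem:chi*bound1}.
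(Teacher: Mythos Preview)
Your proposal is correct and follows essentially the same approach as the paper: both arguments use the optimality inequalities $R_q(\chi_q^*(\sigma),\sigma)\le R_q(0,\sigma)=1$ and $R_q(\chi_q^*(\sigma),\sigma)\le R_q(C\sigma^q,\sigma)<1-c\sigma^2$ (from Lemma~\ref{epsilon1:tune:lq:general} with a suitable $C$), together with the previously established $\chi_q^*(\sigma)=o(1)$, and then argue by contradiction via Lemmas~\ref{lem:chi*bound1} and~\ref{lem:chi*bound2} along offending subsequences. Your write-up is in fact slightly more careful than the paper's, making explicit the subsequence extraction and the verification that $\mathbb{E}|B|^{q-2}<\infty$ so that a strictly negative $C$ exists.
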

\begin{proof}
Since $\chi=\chi^*_q(\sigma)$ minimizes $R_q(\chi,\sigma)$, we know
\begin{eqnarray}
&&R_q(\chi^*_q(\sigma),\sigma) \leq R_q(0,\sigma)=1, \mbox{~~for any~}\sigma >0, \label{contrad:fact1}\\
&&\sigma^{-1}(R_q(\chi^*_q(\sigma),\sigma)-1) \leq \sigma^{-2}(R_q(C\sigma^q,\sigma)-1)<-c, \mbox{~~for small enough~}\sigma, \label{contrad:fact2}
\end{eqnarray}
where the last inequality is due to Lemma \ref{epsilon1:tune:lq:general} with an appropriate choice of $C$, and $c$ is a positive constant. Note that we already know $\chi^*_q(\sigma)=o(1)$. If $\chi_q^*(\sigma) \neq O(\sigma^{q-1})$, Lemma \ref{lem:chi*bound1} will contradict with \eqref{contrad:fact1}. If $\chi_q^*(\sigma) \neq \Omega(\sigma^q)$, Lemma \ref{lem:chi*bound2} will contradict with \eqref{contrad:fact2}. 
\end{proof}

We are now able to derive the exact convergence rate of $\chi^*_q(\sigma)$ and $R_q(\chi^*_q(\sigma),\sigma)$.

\begin{lemma}\label{lq:general:final:rate:lemma}
For any given $q\in (1,2)$, suppose the conditions in Lemma \ref{lem:firstdivisionconstantinterval} for $B$ hold. Then we have 
\begin{eqnarray*}
\chi^*_q(\sigma)&=& \frac{(q-1)\mathbb{E}|B|^{q-2}}{q\mathbb{E}|B|^{2q-2}}\sigma^q+o(\sigma^q), \\
R_q(\chi_q^*(\sigma),\sigma) &=& 1- \frac{(q-1)^2(\mathbb{E}|B|^{q-2})^2}{\mathbb{E}|B|^{2q-2}}\sigma^2+o(\sigma^2).
\end{eqnarray*}
\end{lemma}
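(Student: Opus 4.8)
The plan is to run the same three-stage argument as in Lemma~\ref{lq12:amse}: reduce to the first-order optimality condition, determine the exact orders of the expectations appearing in it, and then read off $\chi^*_q(\sigma)$ and $R_q(\chi^*_q(\sigma),\sigma)$. By Corollary~\ref{lp:general:middle:step} we know $\chi^*_q(\sigma)=\Omega(\sigma^q)$ and $\chi^*_q(\sigma)=O(\sigma^{q-1})=o(1)$, so for all small $\sigma$ the minimizer is a strictly positive, finite number at which $R_q(\cdot,\sigma)$ is differentiable; hence $\partial R_q(\chi^*_q(\sigma),\sigma)/\partial\chi=0$. Writing this condition out exactly as in \eqref{epsilon1:final} gives the identity $\chi^*_q(\sigma)\,(U_1-U_3)=U_2$, where $U_1,U_2,U_3$ are the three expectations defined there, now evaluated at $\chi=\chi^*_q(\sigma)$.

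Next I would pin down the orders of $U_1,U_2,U_3$ as $\sigma\to0$. For $U_1$, the scaling identity $\eta_q(B/\sigma+Z;\chi)=\sigma^{-1}\eta_q(B+\sigma Z;\chi\sigma^{2-q})$ of Lemma~\ref{prox:prop}(iii), together with $\chi\sigma^{2-q}=o(1)$, turns $\sigma^{2q-2}U_1$ into $q^2\,\mathbb{E}|\eta_q(B+\sigma Z;\chi\sigma^{2-q})|^{2q-2}$; since $2q-2>0$ the integrand is dominated by $|B+\sigma Z|^{2q-2}$ and dominated convergence yields $\sigma^{2q-2}U_1\to q^2\mathbb{E}|B|^{2q-2}$ (finite because $\mathbb{E}|B|^2<\infty$), with no chaining needed. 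For $U_2$, after the same rescaling I would split the inner integral into a shrinking neighbourhood of $-B/\sigma$ and its complement: on the complement Lemma~\ref{lem:funcupperbound1} produces a dominating function proportional to $|B|^{q-2}$, whose expectation is finite precisely because $\mathbb{P}(|B|<t)=O(t^{2-q+\epsilon})$, while the contribution of the shrinking neighbourhood is killed by the nested-interval estimates of Lemmas~\ref{lem:firstdivisionconstantinterval}--\ref{lem:limitzerocloseto1power}. This gives $\sigma^{q-2}U_2\to q(q-1)\mathbb{E}|B|^{q-2}$.

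The delicate step, and the one I expect to be the main obstacle, is showing $\sigma^{2q-2}U_3\to0$ (equivalently $U_3=o(U_1)$, which is all that is both true and needed here). In the regime $\mathbb{P}(|B|>\mu)=1$ one had the stronger fact $\sigma^{2q-4}U_3\to q^2(q-1)\mathbb{E}|B|^{2q-4}$, but under the present hypothesis $\mathbb{E}|B|^{2q-4}$ can be $+\infty$, so that route is unavailable. Instead I would use the pointwise bounds $|\eta_q(u;\chi)|^{2-q}\big(|\eta_q(u;\chi)|^{2-q}+\chi q(q-1)\big)^{-3}\le|\eta_q(u;\chi)|^{-2(2-q)}$ away from $-B/\sigma$ and $\le(\chi q(q-1))^{-2}$ near it, split the expectation over $\{|B|>1\}$ — where the argument from the bounded-away-from-zero case applies and produces a finite limit for $\sigma^{2q-4}U_3$, hence only an $O(\sigma^2)$ contribution to $\sigma^{2q-2}U_3$ — and over $\{|B|\le1\}$, where one reruns the nested-interval chaining of Lemma~\ref{lem:firstdivisionconstantinterval} to absorb the neighbourhood of $-B/\sigma$. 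A secondary technical point to handle here is that Corollary~\ref{lp:general:middle:step} only gives the coarse upper bound $\chi^*_q(\sigma)=O(\sigma^{q-1})$, so the chaining estimates (stated for $\chi=C\sigma^q$) must either be checked to remain valid over the range $\sigma^q\lesssim\chi\lesssim\sigma^{q-1}$ via the monotonicity of $\eta_q$ in $\chi$, or one first bootstraps, from the identity $\chi^*_q(\sigma)=U_2/(U_1-U_3)$ together with the established orders of $U_1,U_2,U_3$, to the sharper statement $\chi^*_q(\sigma)=\Theta(\sigma^q)$.

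Granting $\sigma^{2q-2}U_3\to0$, the identity $\chi^*_q(\sigma)=U_2/(U_1-U_3)$ gives $\chi^*_q(\sigma)/\sigma^q\to q(q-1)\mathbb{E}|B|^{q-2}/\big(q^2\mathbb{E}|B|^{2q-2}\big)=(q-1)\mathbb{E}|B|^{q-2}/\big(q\mathbb{E}|B|^{2q-2}\big)$, which is the first claimed expansion. Finally I would feed this rate back into $R_q(\chi^*_q(\sigma),\sigma)-1=S_1+S_2$ with $S_1,S_2$ as in \eqref{lq:general:final:zero}: writing $\chi^*_q(\sigma)=C_0\sigma^q(1+o(1))$ with $C_0=(q-1)\mathbb{E}|B|^{q-2}/\big(q\mathbb{E}|B|^{2q-2}\big)$, the computation in the proof of Lemma~\ref{epsilon1:tune:lq:general} applies with $C$ replaced by $C_0$ (the $o(1)$ perturbation of $\chi/\sigma^q$ affects none of the limits), giving $\sigma^{-2}S_1\to C_0^2q^2\mathbb{E}|B|^{2q-2}$ and $\sigma^{-2}S_2\to-2C_0q(q-1)\mathbb{E}|B|^{q-2}$. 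Hence $\sigma^{-2}\big(R_q(\chi^*_q(\sigma),\sigma)-1\big)\to C_0^2q^2\mathbb{E}|B|^{2q-2}-2C_0q(q-1)\mathbb{E}|B|^{q-2}=-(q-1)^2(\mathbb{E}|B|^{q-2})^2/\mathbb{E}|B|^{2q-2}$, which is exactly the stated second-order expansion of $R_q(\chi^*_q(\sigma),\sigma)$.
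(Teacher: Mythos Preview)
Your proposal follows essentially the same route as the paper: use Corollary~\ref{lp:general:middle:step} to guarantee $\chi^*_q(\sigma)\in(0,\infty)$, invoke the first-order condition in the form $\chi^*U_1=U_2+\chi^*U_3$, establish $\sigma^{2q-2}U_1\to q^2\mathbb{E}|B|^{2q-2}$, $\sigma^{q-2}U_2\to q(q-1)\mathbb{E}|B|^{q-2}$, and a vanishing statement for $U_3$, then read off $\chi^*/\sigma^q$ and feed it back into Lemma~\ref{epsilon1:tune:lq:general}. You also correctly flag the central difficulty: the chaining lemmas were proved only for $\chi=C\sigma^q$, whereas a priori $\chi^*$ could be as large as $\sigma^{q-1}$.

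The one point where your sketch is thinner than the paper is the concrete mechanism for this last issue. Neither of the two routes you suggest quite works as stated: monotonicity of $\eta_q$ in $\chi$ does not give monotonicity of the $U_2,U_3$ integrands (the denominator $|\eta_q|^{2-q}+\chi q(q-1)$ is not monotone in $\chi$), and the bootstrap is circular unless the orders of $U_2,U_3$ can be obtained uniformly over $\sigma^q\lesssim\chi\lesssim\sigma^{q-1}$. The paper resolves this by introducing a $\chi^*$-adapted interval $\mathcal{K}=[-b/\sigma-(\chi^*)^{1/(2-q)},\,-b/\sigma+(\chi^*)^{1/(2-q)}]$, on which the crude bound $(\chi^* q(q-1))^{-3}$ suffices, and then running a two-case analysis according to where $(\chi^*)^{1/(2-q)}$ falls relative to the fixed intervals $\mathcal{I}_{-1},\mathcal{I}_0,\ldots$ of Lemma~\ref{lem:firstdivisionconstantinterval}. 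Outside $\mathcal{K}$ one has $|\eta_q|^{2-q}\ge\chi^*\eta_q^{2-q}(1;1)$, which lets the $U_3$ integrand be dominated by the $U_2$ integrand and recycles the $U_2$ chaining; this gives $\sigma^{q-2}\chi^*U_3\to0$ directly (equivalent to your $\sigma^{2q-2}U_3\to0$ once $\chi^*=\Omega(\sigma^q)$ is used). With that mechanism in place your outline is exactly the paper's proof.
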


\begin{proof}
In this proof, we use $\chi^*$ to denote $\chi^*_q(\sigma)$ for notational simplicity. Using the notations in Equation \eqref{epsilon1:final}, we know that $\chi^*$ satisfies the following equation:
\[
0 = \chi^* U_1 - U_2- \chi^* U_3. 
\]
Our first goal is to show that $\sigma^{q-2} U_2 \rightarrow q(q-1) \mathbb{E} |B|^{q-2}$ as $\sigma \rightarrow 0$. Define the interval 
\begin{equation}\label{eq:intervalIminus1}
\mathcal{K} = [-b/\sigma- (\chi^*)^{1/(2-q)} , -b/\sigma+ (\chi^*)^{1/(2-q)} ].
\end{equation}
 Then we have,
\begin{eqnarray}
 \frac{U_2}{\sigma^{2-q}}  &=&  \frac{q(q-1)}{\sigma^{2-q}} \int_{0}^\infty
 \int_{\mathcal{K}} \frac{|\eta_q(b/\sigma+z; \chi^*)|^{4-2q}}{(|\eta_q(b/\sigma+z; \chi^*)|^{2-q}+ \chi^* q (q-1))^3}  \phi(z)dz dF(b)\nonumber \\
 &&\hspace{-0.3cm} + \frac{q(q-1)}{\sigma^{2-q}} \int_{0}^\infty
 \int_{\mathbb{R} \backslash \mathcal{K}} \frac{|\eta_q(b/\sigma+z; \chi^*)|^{4-2q}}{(|\eta_q(b/\sigma+z; \chi^*)|^{2-q}+ \chi^* q (q-1))^3} \phi(z)dz dF(b). \label{lq:general:dense:final:one}
 \end{eqnarray}
We first show that the first term in \eqref{lq:general:dense:final:one} goes to zero. Note that $\eta_q ((\chi^*)^{1/(2-q)} ; \chi^*) =  (\chi^*)^{1/(2-q)}\eta_q(1;1)$ by Lemma \ref{prox:prop} part (iii), we thus have
 \begin{eqnarray}
&&  \frac{q(q-1)}{\sigma^{2-q}} \int_{0}^\infty
 \int_{\mathcal{K}} \frac{|\eta_q(b/\sigma+z; \chi^*)|^{4-2q}}{(|\eta_q(b/\sigma+z; \chi^*)|^{2-q}+ \chi^* q (q-1))^3} \phi(z)dzdF(b) \nonumber \\
 & \leq& \frac{q(q-1)}{\sigma^{2-q}} \int_{0}^\infty  \int_{\mathcal{K}} \frac{|\eta_q((\chi^*)^{1/(2-q)}; \chi^*)|^{4-2q}}{( \chi^* q (q-1))^3}\phi(z)dzdF(b) \nonumber \\
 & \leq& \frac{1}{\sigma^{2-q}} \int_{0}^\infty  \int_{\mathcal{K}} \frac{\eta_q^{4-2q}(1;1)}{ \chi^* (q (q-1))^2}\phi(z)dzdF(b) \nonumber \\
 & \leq& \frac{1}{\sigma^{2-q}} \int_{0}^{C_{1} \sigma \sqrt{\log(1/\sigma)}}  \int_{\mathcal{K}} \frac{\eta_q^{4-2q}(1;1)}{ \chi^* (q (q-1))^2}\phi(z)dzdF(b) \nonumber \\
 &&+ \frac{\eta_q^{4-2q}(1;1)}{q^2(q-1)^2\sigma^{2-q}\chi^*}|\mathcal{K}|\phi(C_1\sqrt{\log(1/\sigma)}-(\chi^*)^{1/(2-q)}). \nonumber
 \end{eqnarray}
 Since we have already shown $\chi^*= \Omega(\sigma^{q})$ in Corollary \ref{lp:general:middle:step}, it is straightforward to see that the second integral in the above bound is negligible for large enough $C_{1}$. For the first term, we know
 \begin{eqnarray*}\label{lq:general:final:rate:four}
 \frac{1}{\chi^*\sigma^{2-q}} \int_{0}^{C_{1} \sigma \sqrt{\log(1/\sigma)}}  \int_{\mathcal{K}}\phi(z)dzdF(b) \leq O(1) (\chi^*)^{(q-1)/(2-q)} \sigma^{\epsilon} (\log(1/\sigma))^{\frac{2-q+\epsilon}{2}}  = o(1). 
 \end{eqnarray*}
Our next goal is to  find the limit of the second term in \eqref{lq:general:dense:final:one}. In order to do that, we again break the integral into several pieces. Recall the intervals $\mathcal{I}^{\gamma}, \mathcal{I}_{-1}, \mathcal{I}_0, \mathcal{I}_1, \ldots,\mathcal{J}_0,\mathcal{J}_1,\ldots$ that we introduced in Lemmas \ref{lem:firstdivisionconstantinterval} and \ref{lem:limitzerocloseto1power}. We consider two different cases:
\begin{enumerate}
\item In this case, we assume that $(\chi^*)^{1/(2-q)} = o(\frac{\sigma^{q-\epsilon}}{\log(1/\sigma)})$. 

Hence $\mathcal{K}\subseteq \mathcal{I}_{-1}$. We have
\begin{eqnarray}
&&\frac{1}{\sigma^{2-q}} \int_{0}^\infty  \int_{\mathcal{I}_{-1} \backslash \mathcal{K}} \frac{|\eta_q(b/\sigma+z; \chi^*)|^{4-2q}}{(|\eta_q(b/\sigma+z; \chi^*)|^{2-q}+ \chi^* q (q-1))^3} \phi(z)dz dF(b) \nonumber \\
 & \leq &  \frac{1}{\sigma^{2-q}} \int_{0}^\infty  \int_{\mathcal{I}_{-1} \backslash \mathcal{K}} \frac{1}{|\eta_q(b/\sigma+z; \chi^*)|^{2-q}} \phi(z)dz dF(b) \nonumber \\
 &\leq&  \frac{1}{\sigma^{2-q}} \int_{0}^\infty  \int_{\mathcal{I}_{-1} \backslash \mathcal{K}} \frac{1}{\chi^*\eta_q^{2-q}(1;1)} \phi(z)dz dF(b) \nonumber \\
 &\leq& \frac{1}{\sigma^{2-q}} \int_{0}^{C_2 \sigma \sqrt{\log(1/\sigma)}}  \int_{\mathcal{I}_{-1} \backslash \mathcal{K}} \frac{1}{\chi^*\eta_q^{2-q}(1;1)} \phi(z)dz dF(b) \nonumber \\
 &&+  \frac{1}{\sigma^{2-q}} \int_{C_2 \sigma \sqrt{\log(1/\sigma)}}^\infty  \int_{\mathcal{I}_{-1} \backslash \mathcal{K}} \frac{1}{\chi^*\eta_q^{2-q}(1;1)} \phi(z)dz dF(b) \nonumber.
\end{eqnarray}
The fact that  $\chi^*(\sigma) = \Omega(\sigma^{q})$ enables us to conclude that the second integral above goes to zero by choosing large enough $C_2$. Regarding the first term we know
\begin{eqnarray*}
\lefteqn{ \frac{1}{\chi^*\sigma^{2-q}} \int_{0}^{C_2 \sigma \sqrt{\log(1/\sigma)}}  \int_{\mathcal{I}_{-1} \backslash \mathcal{K}} \phi(z)dz dF(b)} \nonumber \\
& \leq& \frac{\phi(0) |\mathcal{I}_{-1}|\mathbb{P}(|B|\leq C_2\sigma \sqrt{\log(1/\sigma)})}{\sigma^{2-q} \chi^* } = O(1) \cdot \frac{\sigma^q}{\chi^*} \cdot (\log(1/\sigma))^{\frac{-q+\epsilon}{2}}  \overset{(a)}{=} o(1),
\end{eqnarray*}
where $(a)$ is due to $\chi^*=\Omega(\sigma^q)$. We now consider another integral.
\begin{eqnarray}
&& \frac{1}{\sigma^{2-q}} \int_{0}^\infty  \int_{\mathcal{I}^{\gamma} \backslash \mathcal{I}_{-1}} \frac{|\eta_q(b/\sigma+z; \chi^*)|^{4-2q}}{(|\eta_q(b/\sigma+z; \chi^*)|^{2-q}+ \chi^* q (q-1))^3} \phi(z)dz dF(b) \nonumber \\
&\leq&\frac{1}{\sigma^{2-q}} \int_{0}^\infty  \int_{\mathcal{I}^{\gamma} \backslash \mathcal{I}_{-1}} \frac{1}{|\eta_q(b/\sigma+z; \chi^*)|^{2-q}} \phi(z)dz dF(b). \nonumber
\end{eqnarray}
Our goal is to show that this integral goes to zero as well. We use the following calculations:
\begin{eqnarray}
&&\frac{1}{\sigma^{2-q}} \int_{0}^\infty  \int_{\mathcal{I}^{\gamma} \backslash \mathcal{I}_{-1}} \frac{1}{|\eta_q(b/\sigma+z; \chi^*)|^{2-q}} \phi(z)dz dF(b) \nonumber \\
&\leq&\frac{1}{\sigma^{2-q}}  \sum_{i=0}^{m_*+2} \int_{0}^\infty  \int_{\mathcal{I}_{i} \backslash \mathcal{I}_{i-1}} \frac{1}{|\eta_q(b/\sigma+z; \chi^*)|^{2-q}} \phi(z)dz dF(b) \nonumber \\
&& + \frac{1}{\sigma^{2-q}} \sum_{i=1}^{\ell} \int_{0}^\infty  \int_{\mathcal{J}_{i} \backslash \mathcal{J}_{i-1}} \frac{1}{|\eta_q(b/\sigma+z; \chi^*)|^{2-q}} \phi(z)dz dF(b) \nonumber \\
&&+ \frac{1}{\sigma^{2-q}} \int_{0}^\infty  \int_{\mathcal{J}_{0} \backslash \mathcal{I}_{m^*+2}} \frac{1}{|\eta_q(b/\sigma+z; \chi^*)|^{2-q}} \phi(z)dz dF(b), \nonumber
\end{eqnarray}
where $\ell$ is chosen in a way such that $\mathcal{I}^\gamma \subseteq \mathcal{J}_{\ell}$. Define $m_i =|\mathcal{I}_i| $ and $\tilde{m}_i = |\mathcal{J}_i| $. Note that we did similar calculations for the case $\chi = C \sigma^q$ in Lemmas \ref{lem:firstdivisionconstantinterval} and \ref{lem:limitzerocloseto1power}. The key argument regarding $\chi$ that we used there to show each term above converges to zero was that $\eta_q(m_i; C \sigma^q) = \Theta (m_i)$ and $\eta_q (\tilde{m}_i; C \sigma^q) = \Theta(\tilde{m}_i)$. Hence, if we can show that $\eta_q(m_i ; \chi^*) = \Theta (m_i)$ and $\eta_q(\tilde{m}_i ; \chi^*) = \Theta (\tilde{m}_i)$ in the current case, then those proofs will carry over and we will have 
\[
\frac{1}{\sigma^{2-q}}\int_{0}^\infty  \int_{\mathcal{I}^{\gamma} \backslash \mathcal{I}_{-1}} \frac{1}{|\eta_q(b/\sigma+z; \chi^*)|^{2-q}} \phi(z)dz dF(b) \rightarrow 0. 
\]
For this purpose, we make use of Lemma \ref{lem:asymptoticbehavior}. Note that since $m_{-1}<m_0< m_1< \ldots< m_{m*+2}< \tilde{m}_0<\tilde{m}_1<\tilde{m}_2 \ldots < \tilde{m}_\ell$, we only need to confirm the condition of Lemma \ref{lem:asymptoticbehavior} for $m_{-1}$. We have
\[
\chi^* m_{-1}^{q-2} = \chi^* \sigma^{(q-\epsilon)(q-2)}\log(1/\sigma)^{2-q} =o(1),
\]
by the assumption of Case 1. Hence in the current case we have obtained
\begin{eqnarray*}
 \frac{1}{\sigma^{2-q}} \int_{0}^\infty  \int_{\mathcal{I}^{\gamma}} \frac{|\eta_q(b/\sigma+z; \chi^*)|^{4-2q}}{(|\eta_q(b/\sigma+z; \chi^*)|^{2-q}+ \chi^* q (q-1))^3} \phi(z)dz dF(b) \rightarrow 0.
\end{eqnarray*}
Furthermore, it is clear that
\[
 \frac{\sigma^{q-2} |\eta_q(b/\sigma+z; \chi^*)|^{4-2q}}{(|\eta_q(b/\sigma+z; \chi^*)|^{2-q}+ \chi^* q (q-1))^3}\leq \frac{1}{|\eta_q(b+\sigma z; \chi^*\sigma^{2-q})|^{2-q}+ \chi^* \sigma^{2-q}q (q-1)}. 
\]
We can then follow the same line of arguments for deriving $\lim_{\sigma\rightarrow 0}-S_2/\sigma^2$ in the proof of Lemma \ref{epsilon1:tune:lq:general} to obtain $\lim_{\sigma \rightarrow 0}\sigma^{q-2}U_2=q(q-1)\mathbb{E}|B|^{q-2}$.

\item The other case is $(\chi^*)^{\frac{1}{2-q}} = \Omega(\frac{\sigma^{q-\epsilon}}{\log(1/\sigma)})$. Because $(\chi^*)^{\frac{1}{2-q}} = \Omega(\frac{\sigma^{q-\epsilon}}{\log(1/\sigma)})$ and $\chi^*=O(\sigma^{q-1})$, there exists a value of $0\leq \bar{m} \leq m^*+1$ such that for $\sigma$ small enough, $(\chi^*)^{\frac{1}{2-q}}=o(|\mathcal{I}_{\bar{m}}|)$ and $(\chi^*)^{\frac{1}{2-q}}=\Omega(|\mathcal{I}_{\bar{m}-1}|)$. We then break the integral into:
\begin{eqnarray}\label{eq:threetermsboundingcase2}
 \lefteqn{\frac{1}{\sigma^{2-q}} \int_{0}^\infty  \int_{\mathcal{I}^{\gamma} \backslash \mathcal{K}} \frac{|\eta_q(b/\sigma+z; \chi^*)|^{4-2q}}{(|\eta_q(b/\sigma+z; \chi^*)|^{2-q}+ \chi^* q (q-1))^3} \phi(z)dz dF(b) } \nonumber \\
 &=&  \frac{1}{\sigma^{2-q}} \int_{0}^\infty  \int_{\mathcal{I}_{\bar{m}} \backslash \mathcal{K}} \frac{|\eta_q(b/\sigma+z; \chi^*)|^{4-2q}}{(|\eta_q(b/\sigma+z; \chi^*)|^{2-q}+ \chi^* q (q-1))^3} \phi(z)dz dF(b) \nonumber \\
 &&+ \frac{1}{\sigma^{2-q}} \int_{0}^\infty  \int_{\mathcal{I}^{\gamma} \backslash \mathcal{I}_{\bar{m}}} \frac{|\eta_q(b/\sigma+z; \chi^*)|^{4-2q}}{(|\eta_q(b/\sigma+z; \chi^*)|^{2-q}+ \chi^* q (q-1))^3} \phi(z)dz dF(b)\label{lq:general:final:rate:two}
\end{eqnarray}
Once we show that each of the two integrals above goes to zero as $\sigma \rightarrow 0$, then the subsequent arguments will be exactly the same as the ones in Case 1. Regarding the first integral,
\begin{eqnarray*}
 \lefteqn{ \frac{1}{\sigma^{2-q}} \int_{0}^\infty  \int_{\mathcal{I}_{\bar{m}} \backslash \mathcal{K}} \frac{|\eta_q(b/\sigma+z; \chi^*)|^{4-2q}}{(|\eta_q(b/\sigma+z; \chi^*)|^{2-q}+ \chi^* q (q-1))^3} \phi(z)dz dF(b)} \nonumber \\
 &\leq& \frac{1}{\sigma^{2-q}} \int_{0}^{C_3 \sigma \sqrt{\log(1/\sigma)}}  \int_{\mathcal{I}_{\bar{m}} \backslash \mathcal{K}} \frac{1}{\chi^*\eta^{2-q}_q(1;1)} \phi(z)dz dF(b) \nonumber\\
 &&+  \frac{1}{\sigma^{2-q}} \int_{C_3 \sigma \sqrt{\log(1/\sigma)}}^\infty  \int_{\mathcal{I}_{\bar{m}} \backslash \mathcal{K}} \frac{1}{\chi^*\eta^{2-q}_q(1;1)} \phi(z)dz dF(b)  \nonumber \\
 &\leq& \frac{\phi(0)|\mathcal{I}_{\bar{m}}|\mathbb{P}(|B|\leq C_3\sigma \sqrt{\log(1/\sigma)})}{\sigma^{2-q}\eta^{2-q}_q(1;1)\chi^*} + \frac{\phi(C_3\sqrt{\log(1/\sigma})/2)}{\sigma^{2-q}\eta^{2-q}_q(1;1)\chi^*}.
\end{eqnarray*}
Since $\chi^* = \Omega(\sigma^q)$ from Corollary \ref{lp:general:middle:step}, it is clear that the second term in the above upper bound goes to zero by choosing large enough $C_3$. Regarding the first term we have
\begin{eqnarray*}\label{lq:general:final:rate:one}
 \frac{|\mathcal{I}_{\bar{m}}| \mathbb{P}(|B|\leq C_3\sigma \sqrt{\log(1/\sigma)})}{\sigma^{2-q}\chi^*} &\overset{(a)}{\leq}&  O(1)  \frac{\sigma^{\epsilon} (\log(1/\sigma))^{\frac{2-q+\epsilon}{2}}  \sigma^{\frac{\epsilon+q-1}{q-1}(2-q)^{\bar{m}}-\frac{\epsilon}{q-1}} }{\chi^* \log(1/\sigma)^{\mathcal{S}_0^{\bar{m}}(2-q) }}  \\
&\overset{(b)}{\leq}& 
\begin{cases}
O(1)(\log(1/\sigma))^{\frac{-q+\epsilon}{2}}=o(1) & \mbox{~~if~}\bar{m}>0,  \\
O(1)(\log(1/\sigma))^{\frac{\epsilon+4-3q}{2}}\sigma^{q^2-(2+\epsilon)q+3\epsilon+1}=o(1) & \mbox{~~if~}\bar{m}=0,
\end{cases}
\end{eqnarray*}
where $(a)$ holds even when $\bar{m}=m^*+1$ since $\frac{\epsilon+q-1}{q-1}(2-q)^{\bar{m}}-\frac{\epsilon}{q-1} \leq 0$ by the definition of $m^*$; and $(b)$ is due to the fact that $(\chi^*)^{1/(2-q)} = \Omega \left(\frac{\sigma^{\frac{\epsilon+q-1}{q-1}(2-q)^{\bar{m}-1}-\frac{\epsilon}{q-1}}}{(\log(1/\sigma))^{\mathcal{S}_0^{\bar{m}-1}(2-q)}} \right)$ when $\bar{m}>0$ and $(\chi^*)^{1/(2-q)}=\Omega(\frac{\sigma^{q-\epsilon}}{\log(1/\sigma)})$ when $\bar{m}=0$, according to the choice of $\bar{m}$. For the second integral in \eqref{lq:general:final:rate:two}, note that $(\chi^*)^{\frac{1}{2-q}}=o(|\mathcal{I}_{\bar{m}}|)$, hence $\chi^* |\mathcal{I}_{\bar{m}}|^{q-2}  \rightarrow 0$. It implies that the arguments in calculating the second integral in Case 1 hold here as well. 

So far we have been able to derive the limit of $ \sigma^{q-2} U_2$. We next analyze the term $\sigma^{q-2}\chi^* U_3$ and show that it goes to zero as $\sigma \rightarrow 0$. We have
\begin{eqnarray}
 \lefteqn{ \frac{\chi^*}{\sigma^{2-q}} \int_{0}^\infty
 \int_{\mathcal{K}} \frac{|\eta_q(b/\sigma+z; \chi^*)|^{2-q}}{(|\eta_q(b/\sigma+z; \chi^*)|^{2-q}+ \chi^* q (q-1))^3} \phi(z) dz dF(b) }\nonumber \\
 &\leq& \frac{\chi^*}{\sigma^{2-q}} \int_{0}^\infty
 \int_{\mathcal{K}} \frac{\chi^*\eta_q^{2-q}(1;1)}{( \chi^* q (q-1))^3} \phi(z) dz dF(b)=  \frac{1}{\sigma^{2-q}} \int_{0}^\infty
 \int_{\mathcal{K}} \frac{\eta_q^{2-q}(1;1)}{ \chi^* (q (q-1))^3} \phi(z) dz dF(b) \nonumber 
\end{eqnarray}
The upper bound above has been shown to be zero in the preceding calculations regarding the first term in \eqref{lq:general:dense:final:one}. Furthermore, note that when $z \notin \mathcal{K}$, 
\begin{equation*}
 |\eta_q(b/\sigma +z; \chi^*)|^{2-q} \geq \chi^* \eta^{2-q}_q(1; 1).
\end{equation*}
We can then obtain
\begin{eqnarray*}
\lefteqn{\frac{\chi^*}{\sigma^{2-q}} \int_{0}^\infty
 \int_{\mathcal{I}^{\gamma} \backslash \mathcal{K}} \frac{|\eta_q(b/\sigma+z; \chi^*)|^{2-q}}{(|\eta_q(b/\sigma+z; \chi^*)|^{2-q}+ \chi^* q (q-1))^3} \phi(z) dz dF(b)} \nonumber \\
 &\leq&O(1)\cdot  \frac{1}{\sigma^{2-q}} \int_{0}^\infty
 \int_{\mathcal{I}^{\gamma} \backslash \mathcal{K}} \frac{|\eta_q(b/\sigma+z; \chi^*)|^{4-2q}}{(|\eta_q(b/\sigma+z; \chi^*)|^{2-q}+ \chi^* q (q-1))^3} \phi(z) dz dF(b).
\end{eqnarray*}
The last term has been shown to converge to zero in the analysis of $\sigma^{q-2}U_2$. Above all we have derived that
\begin{eqnarray*}
\lim_{\sigma \rightarrow 0} \frac{\chi^*}{\sigma^{2-q}} \int_{0}^\infty
 \int_{\mathcal{I}^{\gamma}} \frac{|\eta_q(b/\sigma+z; \chi^*)|^{2-q}}{(|\eta_q(b/\sigma+z; \chi^*)|^{2-q}+ \chi^* q (q-1))^3} \phi(z) dz dF(b)=0.
\end{eqnarray*}
This together with the fact 
\[
\frac{\chi^*|\eta_q(b/\sigma+z; \chi^*)|^{2-q}}{\sigma^{2-q}(|\eta_q(b/\sigma+z; \chi^*)|^{2-q}+ \chi^* q (q-1))^3}\leq \frac{q^{-1}(q-1)^{-1}}{|\eta_q(b+\sigma z;\sigma^{2-q}\chi^*)|^{2-q}+\sigma^{2-q}\chi^*q(q-1)},
\]
we can again follow the line of arguments for $-\sigma^{-2}S_2$ in the proof of Lemma \ref{epsilon1:tune:lq:general} to get
\begin{eqnarray*}
\lim_{\sigma \rightarrow 0} \frac{\chi^*}{\sigma^{2-q}} \int_{0}^\infty
 \int_{\mathbb{R} } \frac{|\eta_q(b/\sigma+z; \chi^*)|^{2-q}}{(|\eta_q(b/\sigma+z; \chi^*)|^{2-q}+ \chi^* q (q-1))^3} \phi(z) dz dF(b) = 0.
\end{eqnarray*}
Finally a direct application of Dominated Convergence Theorem gives us $\sigma^{2q-2}U_1\rightarrow q^2\mathbb{E}|B|^{2q-2}$. Hence we are able to derive the following
\begin{eqnarray*}
\lim_{\sigma \rightarrow 0}\frac{\chi^*}{\sigma^q}=\lim_{\sigma \rightarrow 0}\frac{\sigma^{q-2}U_2+\sigma^{q-2}\chi^*U_3}{\sigma^{2q-2}U_1}=\frac{(q-1)\mathbb{E}|B|^{q-2}}{q\mathbb{E}|B|^{2q-2}}.
\end{eqnarray*}
\end{enumerate}
Now that we have derived the convergence rate of $\chi^*$, according to Lemma \ref{epsilon1:tune:lq:general}, we can immediately obtain the order of $R_q(\chi^*,\sigma)$. 
\end{proof}

Having the convergence rate of $R_q(\chi^*_q(\sigma),\sigma)$ as $\sigma \rightarrow 0$ in Lemma \ref{lq:general:final:rate:lemma}, the derivation for the expansion of ${\rm AMSE}(\lambda_{*,q},q,\sigma_w)$ will be the same as the one in the proof of Theorem \ref{thm:densebbiggerthanmu}.

\subsubsection{Proof for the case $q=1$} 
\begin{lemma}\label{l1:amse}
Suppose that $P(|B|\leq t)=\Theta(\sigma^{\ell})$ (as $t\rightarrow 0$) and $\mathbb{E}|B|^2<\infty$, then for $q=1$
\begin{eqnarray*}
&&\alpha_m \sigma^{\ell}  \leq \chi_q^*(\sigma) \leq \beta_m  \sigma^{\ell} (\log_m(1/\sigma))^{\ell/2}, \\
&&\tilde{\alpha}_m \sigma^{2\ell}  \leq 1 - R_q(\chi_q^*(\sigma),\sigma)  \leq \tilde{\beta}_m  \sigma^{2\ell} (\log_m(1/\sigma))^{\ell},
\end{eqnarray*}
for small enough $\sigma$, where $\log_m(1/\sigma)=\underbrace{\log \log \ldots \log}_{m\  \rm times} \left(\frac{1}{\sigma}\right)$; $m> 0$ is an arbitrary integer number; and $\alpha_m, \beta_m, \tilde{\alpha}_m, \tilde{\beta}_m>0$ are four constants depending on $m$.
\end{lemma}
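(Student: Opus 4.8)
The plan is to follow the roadmap of Section~\ref{sec:roadmap}: first pin down the order of $\chi_1^*(\sigma)$, then read off the order of $R_1(\chi_1^*(\sigma),\sigma)-1$. Since $\eta_1(u;\chi)=\mathrm{sign}(u)(|u|-\chi)_+$ is explicit, writing $U=B/\sigma+Z$ and applying Stein's lemma exactly as in the proof of Lemma~\ref{dense:l1} gives $R_1(\chi,\sigma)-1=\mathbb{E}(\eta_1(U;\chi)-U)^2-2\mathbb{P}(|U|\le\chi)$ with $\mathbb{E}(\eta_1(U;\chi)-U)^2=\chi^2\mathbb{P}(|U|>\chi)+\mathbb{E}[U^2\mathbbm{1}(|U|\le\chi)]$. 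The first step is to show $\chi_1^*(\sigma)\to 0$; this is the same Fatou/subsequence argument as in Lemma~\ref{dense:l1}, with the hypothesis $\mathbb{P}(|B|>\mu)=1$ there replaced here by $\mathbb{P}(B=0)=0$, which still forces $|B|/\sigma\to\infty$ a.s. and hence $R_1(\chi,\sigma)\to\infty$ whenever $\chi$ stays bounded away from $0$, contradicting $R_1(\chi_1^*(\sigma),\sigma)\le R_1(0,\sigma)=1$. Then $\mathbb{P}(|U|>\chi_1^*(\sigma))\to 1$ by dominated convergence. Plugging the trial value $\chi=\chi_0\sigma^\ell$ into $R_1$ for a small constant $\chi_0$ and lower bounding $\mathbb{P}(|U|\le\chi_0\sigma^\ell)\gtrsim\sigma^{2\ell}$ (the mass on $\{|B|\in[\sigma^{1+\ell},\sigma]\}$ already suffices) yields $R_1(\chi_0\sigma^\ell,\sigma)\le 1-c\sigma^{2\ell}$; this shows $\chi_1^*(\sigma)$ is a nonzero finite minimizer and simultaneously gives the lower bound $1-R_1(\chi_1^*(\sigma),\sigma)\ge\tilde\alpha_m\sigma^{2\ell}$.

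Being a nonzero minimizer, $\chi_1^*(\sigma)$ solves the first order condition, which (using $\int_a^\infty z\phi(z)\,dz=\phi(a)$) collapses to
\[
\chi_1^*(\sigma)\,\mathbb{P}(|U|>\chi_1^*(\sigma))=\mathbb{E}\phi\!\big(\chi_1^*(\sigma)-B/\sigma\big)+\mathbb{E}\phi\!\big(\chi_1^*(\sigma)+B/\sigma\big);
\]
since $\mathbb{P}(|U|>\chi_1^*(\sigma))\to 1$, everything reduces to estimating the two Gaussian-smoothed integrals on the right under $\mathbb{P}(|B|\le t)=\Theta(t^\ell)$. For the lower side, $\mathbb{E}\phi(\chi_1^*(\sigma)+B/\sigma)\ge\phi(\chi_1^*(\sigma)+1)\,\mathbb{P}(|B|\le\sigma)\gtrsim\sigma^\ell$, which yields $\chi_1^*(\sigma)\ge\alpha_m\sigma^\ell$. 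For the upper side, $\mathbb{E}\phi(\chi_1^*(\sigma)+B/\sigma)\le\mathbb{E}\phi(B/\sigma)$ since $\chi_1^*(\sigma)>0$, and the remaining term $\mathbb{E}\phi(\chi_1^*(\sigma)-B/\sigma)=\mathbb{E}\phi(|B|/\sigma-\chi_1^*(\sigma))$ is controlled by splitting the range of $|B|$ into nested shells on which $|B|/\sigma-\chi_1^*(\sigma)$ successively exceeds $\sqrt{2\ell\log_1(1/\sigma)},\sqrt{2\ell\log_2(1/\sigma)},\dots$: on each shell one bounds $\phi$ by its value at the inner threshold against $\mathbb{P}(|B|\le\sigma\sqrt{2\ell\log_k(1/\sigma)})=O\big((\sigma\sqrt{\log_k(1/\sigma)})^\ell\big)$, while the outermost shell contributes only $O(\sigma^\ell)$ because $\phi$ is already that small there. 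Pushing this chaining to depth $m$ gives $\mathbb{E}\phi(\chi_1^*(\sigma)-B/\sigma)=O\big(\sigma^\ell(\log_m(1/\sigma))^{\ell/2}\big)$ and hence, via the displayed identity, $\chi_1^*(\sigma)\le\beta_m\sigma^\ell(\log_m(1/\sigma))^{\ell/2}$. Substituting this into $1-R_1(\chi_1^*(\sigma),\sigma)\le 2\mathbb{P}(|U|\le\chi_1^*(\sigma))$ together with the estimate $\mathbb{P}(|U|\le\chi)=O(\chi\sigma^\ell)$ (another Gaussian-smoothing bound of the same type) gives the upper bound $1-R_1(\chi_1^*(\sigma),\sigma)\le\tilde\beta_m\sigma^{2\ell}(\log_m(1/\sigma))^{\ell}$.

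The hard part is the upper bound on $\chi_1^*(\sigma)$. There is an intrinsic circularity — bounding $\mathbb{E}\phi(\chi_1^*(\sigma)-B/\sigma)$ needs an a priori bound on $\chi_1^*(\sigma)$ — broken by establishing $\chi_1^*(\sigma)\to 0$ first; and beyond that one must quantify precisely how the near-zero mass of $p_\beta$ trades against the Gaussian tail of $Z$, the decomposition realizing this tradeoff being exactly what introduces the $\log_m(1/\sigma)$ factors. Because $m$ is arbitrary the resulting bounds are sharp up to iterated logarithms, and closing that last gap appears to need a finer argument than the chaining used here. The remaining passage from the order of $R_1(\chi_1^*(\sigma),\sigma)$ to the AMSE expansion (carried out after this lemma) is routine and mirrors the $q\in(1,2]$ case.
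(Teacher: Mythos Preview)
Your approach is essentially the same as the paper's. Both proofs (i) adapt the $\chi_1^*(\sigma)\to 0$ argument from Lemma~\ref{dense:l1}, (ii) use a trial value $\chi=\Theta(\sigma^\ell)$ to show $R_1(\chi_1^*(\sigma),\sigma)<1-\Theta(\sigma^{2\ell})$ and hence that $\chi_1^*(\sigma)$ is a nonzero interior minimizer, (iii) read off the first-order condition $\chi^*\,\mathbb{P}(|U|>\chi^*)=\mathbb{E}\phi(\chi^*-B/\sigma)+\mathbb{E}\phi(\chi^*+B/\sigma)$, and (iv) bound the right-hand side by a shell decomposition of the range of $|B|$ with thresholds at $\sigma\sqrt{2\ell\log_k(1/\sigma)}$, which is exactly what produces the $(\log_m(1/\sigma))^{\ell/2}$ factor. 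The paper outsources step (iv) to Lemma~21 of \cite{weng2016overcoming}, whereas you sketch it explicitly; the paper's trial value is $\chi=3e^{-1}\mathbb{E}\phi(\sqrt{2}B/\sigma)$ rather than your $\chi_0\sigma^\ell$, but these are the same order.

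Two small slips to clean up. First, when $\chi$ stays bounded away from~$0$ but converges to a finite $C>0$, one does \emph{not} get $R_1(\chi,\sigma)\to\infty$; rather $R_1\to 1+C^2$, and the contradiction is with $R_1(\tilde C,\sigma)\to 1+\tilde C^2<1+C^2$ for $0<\tilde C<C$ (this is how Lemma~\ref{dense:l1} actually argues). Second, the estimate you quote as $\mathbb{P}(|U|\le\chi)=O(\chi\sigma^\ell)$ should carry the same iterated-log factor, i.e.\ $\mathbb{P}(|U|\le\chi)\le 2\chi\,\mathbb{E}\phi(-B/\sigma+\alpha\chi)=O\big(\chi\,\sigma^\ell(\log_m(1/\sigma))^{\ell/2}\big)$; combined with $\chi^*=O(\sigma^\ell(\log_m(1/\sigma))^{\ell/2})$ this still gives the claimed $O(\sigma^{2\ell}(\log_m(1/\sigma))^{\ell})$, so your conclusion is unaffected.
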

\begin{proof}
Since the proof steps are similar to those in Lemma \ref{dense:l1}, we do not repeat every detail and instead highlight the differences. We write $\chi^*$ for $\chi^*_q(\sigma)$ for notational simplicity. Using the same proof steps in Lemma \ref{dense:l1}, we can obtain $\chi^* \rightarrow 0$, as $\sigma \rightarrow 0$ and 
\begin{eqnarray*}
\chi^*=\frac{\mathbb{E}\phi(\chi^*-B/\sigma)+\mathbb{E}\phi(\chi^*+B/\sigma)}{\mathbb{E}\mathbbm{1}(|Z+B/\sigma|\geq \chi^*)}.
\end{eqnarray*}
Following the same arguments from the proof of Lemma 21 in \cite{weng2016overcoming}, we can show
\begin{eqnarray}
&&\Theta(\sigma^{\ell}) \leq \mathbb{E}\phi(\chi^*-B/\sigma)+\mathbb{E}\phi(\chi^*+B/\sigma)\leq  \Theta(\sigma^{\ell}(\log_m(1/\sigma))^{\ell/2}),  \label{proofneed:one}\\
&& \Theta(\sigma^{\ell}) \leq \mathbb{E}\phi(\sqrt{2}B/\sigma), ~~\mathbb{E}\phi(-B/\sigma+\alpha \chi^*)\leq \Theta(\sigma^{\ell}(\log_m(1/\sigma))^{\ell/2}),  \label{proofneed:two}
\end{eqnarray}
where $\alpha$ is any number between 0 and 1. Since $\mathbb{E}\mathbbm{1}(|Z+B/\sigma|\geq \chi^*) \rightarrow 1$ , the bounds for $\chi^*$ is proved by using the result \eqref{proofneed:one}. Furthermore, we know
\begin{eqnarray*}
&&\hspace{-0.5cm} R_q(\chi^*,\sigma)-1 \leq R_q(\chi,\sigma)-1=\mathbb{E}(\eta_1(B/\sigma+Z;\chi)-B/\sigma-Z)^2+2\mathbb{E}(\partial_1 \eta_1(B/\sigma+Z;\chi)-1)\\
&&\hspace{1.5cm} \leq \chi^2 - 2\mathbb{E}\int_{-B/\sigma-\chi}^{-B/\sigma+\chi}\phi(z)dz= \chi^2 - 4\chi \mathbb{E}\phi(-B/\sigma+\alpha\chi),
\end{eqnarray*}
where $|\alpha|\leq 1$ is dependent on $B$. If we choose $\chi=3e^{-1}\mathbb{E}\phi(\sqrt{2}B/\sigma)$ in the above inequality, it is straightforward to see that 
\[
R_q(\chi^*,\sigma)-1 \leq -\Theta((\mathbb{E}\phi(\sqrt{2}B/\sigma))^2)\leq -\Theta(\sigma^{2\ell}),
\]
where the last step is due to \eqref{proofneed:two}. For the other bound, note that
\begin{eqnarray*}
&&\hspace{-0.5cm}R_q(\chi^*,\sigma)-1=\mathbb{E}(\eta_1(B/\sigma+Z;\chi^*)-B/\sigma-Z)^2+2\mathbb{E}(\partial_1 \eta_1(B/\sigma+Z;\chi^*)-1) \\
&&\geq  -2\mathbb{E} \int_{-B/\sigma-\chi^*}^{-B/\sigma+\chi^*}\phi(z)dz= - 4\chi^* \mathbb{E}\phi(-B/\sigma+\alpha\chi^*) \geq - \Theta(\sigma^{2\ell}(\log_m(1/\sigma))^{\ell}).
\end{eqnarray*}
The last inequality holds because of the upper bound on $\chi^*$ and \eqref{proofneed:two}. 
\end{proof}

 Based on the results of Lemma \ref{l1:amse}, deriving the expansion of ${\rm AMSE(\lambda_{*,1},1,\sigma_w)}$ can be done in a similar way as in the proof of Theorem \ref{thm:densebbiggerthanmu}. We do not repeat it here. 

\subsection{Proof of Theorem \ref{large:thm5}}  \label{proof:large:thm5}

The idea of this proof is similar to those for Theorems \ref{thm:densebbiggerthanmu} and \ref{thm:densegeneralcase}. We make use of the result in Theorem  \ref{thm:eqpseudolip}:
\begin{eqnarray}\label{largedelta:eq}
{\rm AMSE}(\lambda_{*,q},q,\delta)=\bar{\sigma}^2R_q(\chi^*_q(\bar{\sigma}),\bar{\sigma})=\delta \bar{\sigma}^2-\sigma_w^2.
\end{eqnarray}
Since we are in the large sample regime where $\delta \rightarrow \infty$, $\bar{\sigma}$ is a function of $\delta. $ It is clear from \eqref{largedelta:eq} that $0\leq \delta \bar{\sigma}^2-\sigma^2_w\leq \bar{\sigma}^2$. Hence $\bar{\sigma}^2\leq \sigma^2_w/(\delta-1) \rightarrow 0$, which further leads to
\begin{eqnarray}\label{useful:rate}
\bar{\sigma}^2=\frac{\sigma_w^2}{\delta}+o(1/\delta).
\end{eqnarray}
Due to the fact that $\bar{\sigma}\rightarrow 0$ as $\delta \rightarrow \infty$, we will be able to use the convergence rate results of $R_q(\chi^*_q(\sigma),\sigma)$ (as $\sigma \rightarrow 0$) we have proved in Lemmas \ref{lq12:amse} and \ref{dense:l1}. For $1<q\leq 2$, Equations \eqref{largedelta:eq}, \eqref{useful:rate} and Lemma \ref{lq12:amse} together yield
\begin{eqnarray}\label{forl1:purpose}
&& \delta^2 ({\rm AMSE}(\lambda_{*,q},q,\delta)-\sigma_w^2/\delta)=\delta^2(\bar{\sigma}^2R_q(\chi^*_q(\bar{\sigma}),\bar{\sigma})-(\bar{\sigma}^2-\bar{\sigma}^2R_q(\chi^*_q(\bar{\sigma}),\bar{\sigma})/\delta)) \nonumber \\
&&=(\bar{\sigma}^4\delta^2)\cdot \frac{R_q(\chi^*_q(\bar{\sigma}),\bar{\sigma})-1}{\bar{\sigma}^2}+(\delta \bar{\sigma}^2) \cdot R_q(\chi^*_q(\bar{\sigma}),\bar{\sigma}) \\
&&\rightarrow \frac{-(q-1)^2(\mathbb{E}|B|^{q-2})^2}{\mathbb{E}|B|^{2q-2}}\sigma_w^4+\sigma_w^2. \nonumber
\end{eqnarray}
For the case $q=1$, from Lemma \ref{dense:l1} we know $R_q(\chi^*_q(\bar{\sigma}),\bar{\sigma})-1$ is exponentially small. So the firs term in \eqref{forl1:purpose} vanishes and the second term remains the same.

\subsection{Proof of Theorem \ref{large:thm6}} \label{proof:large:thm6}

Theorem \ref{large:thm6} can be proved in a similar fashion as for Theorem \ref{large:thm5}. Equation \eqref{useful:rate} still holds. Equations \eqref{largedelta:eq}, \eqref{useful:rate} and Lemma \ref{l1:amse} together give us for $q=1$,
\begin{eqnarray*}
 \delta^{\ell+1} ({\rm AMSE}(\lambda_{*,q},q,\delta)-\sigma_w^2/\delta)=(\bar{\sigma}^{2\ell+2}\delta^{\ell+1})\cdot \frac{R_q(\chi^*_q(\bar{\sigma}),\bar{\sigma})-1}{\bar{\sigma}^{2\ell}}+(\delta^{\ell} \bar{\sigma}^2) \cdot R_q(\chi^*_q(\bar{\sigma}),\bar{\sigma}),
\end{eqnarray*}
where the first term above is $\Theta(1)$ and the second one is $o(1)$ when $\ell<1$. The case $1<q\leq 2$ can be proved exactly the same way as in Theorem \ref{large:thm5} by using Lemma \ref{lq:general:final:rate:lemma}.

\section*{Acknowledgment}

Arian Maleki is supported by NSF grant CCF1420328.

\bibliography{mc}
\bibliographystyle{ims}

\end{document}